\newenvironment{labellist}[1]
	{\begin{list}{}
		{\settowidth{\labelwidth}{#1}
		 \setlength{\leftmargin}{\labelwidth}
		 \addtolength{\leftmargin}{\labelsep}
		 }}
	{\end{list}}
\providecommand{\corollaryname}{Corollary}
\providecommand{\definitionname}{Definition}
\providecommand{\lemmaname}{Lemma}
\providecommand{\remarkname}{Remark}
\providecommand{\theoremname}{Theorem}
\theoremstyle{plain}
\newtheorem*{statement}{Main result}
\newtheorem{thm}{\protect\theoremname}[section]
\newtheorem{cor}[thm]{\protect\corollaryname}
\newtheorem{lem}[thm]{\protect\lemmaname}
\theoremstyle{definition}
\newtheorem{defn}[thm]{\protect\definitionname}
\theoremstyle{remark}
\newtheorem{rem}[thm]{\protect\remarkname}
\newcommand{\inter}{i}%
\newcommand{\exter}{o}%
\newcommand{\tcomp}{\mathrm{{\scriptscriptstyle T}}}%
\newcommand{\zmindex}{\mathrm{{\scriptscriptstyle ZM}}}%
\newcommand{\dindex}{\mathrm{{\scriptscriptstyle D}}}%
\newcommand{\bindex}{\mathrm{{\scriptscriptstyle B}}}%
\newcommand{\sindex}{\mathrm{{\scriptscriptstyle S}}}%
\newcommand{\gindex}{\mathrm{{\scriptscriptstyle G}}}%
\newcommand{\conlim}{\substack{\X\to\P \\ \X\in\Gamma(\P)}}
\newcommand{\conlimi}{\substack{\X\to\P \\ \X\in\Gamma_{\inter}(\P)}}
\newcommand{\conlimo}{\substack{\X\to\P \\ \X\in\Gamma_{\exter}(\P)}}
\newcommand{\plus}{+}
\newcommand{\minus}{-}
\renewcommand{\o}{\Omega}%
\newcommand{\bo}{\partial\Omega}%
\newcommand{\bbo}{(\bo)}%
\newcommand{\measure}[1]{\;d\sigma\negmedspace\left(#1\right)}%
\newcommand{\pv}{\text{\ensuremath{\mathrm{p.v.}} }}%
\newcommand{\Q}{q}%
\renewcommand{\P}{p}%
\newcommand{\X}{x}%
\renewcommand{\tan}[1]{\text{T}_{#1}(\bo)}
\newcommand{\reg}{\mathrm{\mathop{Reg}}\bbo}%
\newcommand{\regb}{\mathrm{\mathop{Reg}}(\mathbb{B}_{j})}%
\newcommand{\ltd}[1]{L^{2}(\bo ; \mathbb{R}^{#1})}%
\newcommand{\lt}{L^{2}\bbo}%
\newcommand{\ltz}{L_{\zmindex}^{2}\bbo}%
\newcommand{\sob}{W^{1,2}\bbo}%
\newcommand{\divf}{D\bbo}%
\newcommand{\grad}{G\bbo}%
\newcommand{\tang}{T\bbo}%
\newcommand{\tangz}{T_{\negthinspace\zmindex}\bbo}%
\newcommand{\hardyp}{H^{2}_{\hspace{-0.5pt}\plus}\bbo}%
\newcommand{\hardym}{H^{2}_{\!\minus}(\bo)}%
\newcommand{\Iod}{I\bbo}%
\newcommand{\iOd}{O\bbo}%
\newcommand{\pot}{\mathcal{P}}%
\newcommand{\poti}{\mathcal{P}_{\inter}}%
\newcommand{\poto}{\mathcal{P}_{\!\exter}}%
\newcommand{\tri}{B_{\hspace{0.5pt}\inter}^{\phantom{\star}}}%
\newcommand{\tric}{B_{\hspace{0.5pt}\inter}^{\star}}%
\newcommand{\tro}{B_{\exter}^{\vphantom{\star}}}%
\newcommand{\troc}{B_{\exter}^{\star}}%
\newcommand{\projs}{P_{\negthickspace\sindex}}%
\newcommand{\projt}{P_{\negthickspace\bindex}^{\vphantom{\perp}}}%
\newcommand{\projtp}{P_{\negthickspace\bindex}^{\perp}}%
\newcommand{\restri}{R_{\zmindex}}%
\newcommand{\projp}{P_{\negthickspace\plus}}%
\newcommand{\projm}{P_{\negthickspace\minus}}%
\newcommand{\projd}{P_{\negthickspace\dindex}}%
\newcommand{\projD}{\mathbb{P}_{\negthickspace\dindex}}%
\newcommand{\projDp}{\projD^{\perp}}%
\newcommand{\projv}{P_{\inter}}%
\newcommand{\projc}{P_{\exter}}%
\newcommand{\projz}{P_{\negthickspace\zmindex}}%
\newcommand{\projg}{P_{\negthickspace\gindex}}%
\newcommand{\nablat}{\nabla_{\negthickspace\tcomp}}%
\newcommand{\DLay}{\mathscr{K\!}}%
\newcommand{\SLay}{\mathscr{S\negmedspace}}%
\newcommand{\dlay}{K}%
\newcommand{\slay}{S}%
\newcommand{\divt}{\mathrm{\mathop{div_{\tcomp}}}}%
\newcommand{\consti}{\nu_{\mathrm{o}}}%
\title{\textbf{\large Decomposition of $L^{2}$-vector fields on Lipschitz surfaces: characterization via null-spaces of the scalar  potential}}
\author{L. Baratchart$^{1}$, C. Gerhards$^{2}$, and A. Kegeles$^{2}$\footnote{alexander.kegeles@geophysik.tu-freiberg.de}}
\date{\textit{\footnotesize $^{1}$INRIA, Project APICS, 2004 route de Lucioles, BP 93, }\\
\textit{\footnotesize Sophia-Antipolis F-06902 Cedex, France}\bigskip{}
\\
\textit{\footnotesize $^{2}$TU Bergakademie Freiberg, Institute of Geophysics and Geoinformatics,}\\ 
\textit{\footnotesize Gustav-Zeuner-Str. 12, 09599 Freiberg, Germany}}
\begin{document}
\maketitle

\begin{abstract}
For $\bo$ the boundary of a bounded and connected strongly Lipschitz domain in $\mathbb{R}^{d}$ with  $d\geq3$, we prove that any field $f\in\ltd d$ decomposes, in an  unique way, as the sum of three silent vector fields---fields whose magnetic potential vanishes in one or both components of $\mathbb{R}^d\setminus\bo$. Moreover, this decomposition is orthogonal if and only if $\bo$ is a sphere.
We also show that any $f$ in $\ltd d$ is uniquely the sum of two silent fields and a Hardy function, in which case the sum is orthogonal regardless of $\bo$; we express the corresponding orthogonal projections in terms of layer potentials. 
When $\bo$ is a sphere, both decompositions coincide and match
what has been called the Hardy-Hodge decomposition in the literature.
\end{abstract}

\section{Introduction}
Orthogonal direct sum decompositions provide an important tool in harmonic analysis and applied sciences. Orthogonal direct sums are closely related to the method of orthogonal projections, which was introduces into potential theory by Weyl \cite{weyl1940}. Further developed by Vishik \cite{vishik1949method, vishik1951strongly} and G\aa rding \cite{gaarding1953dirichlet}, and combined with the results by Lax and Milgram \cite{lax2005parabolic} this method quickly developed and serves now as a powerful approach to elliptic and parabolic boundary value problems (see for example \cite{friedman2008partial} or \cite{ hormander2015analysis}). On a sphere, there are two sums that are particularly interesting for magnetic inverse problems: one is the decomposition of fields into contributions that do not create a magnetic field inside or outside of the sphere; the other is the decomposition of fields into contributions that can be harmonically continued inside and outside the sphere. In this paper, we show how these sums generalize to hold on Lipschitz surfaces. 

The first decomposition appears in the following setting: Consider a ball whose very thin boundary layer is magnetized. The magnetization of that layer defines a vector field supported near the boundary of the ball. Idealizing, we can say that this vector field is defined on a sphere. Such a magnetization will generate a magnetic field inside and outside the sphere. By measuring the surrounding magnetic field we thus can \lq\lq see'' this magnetization. However, it is possible that a magnetization will be invisible---meaning that its magnetic field will vanish at least inside or outside of the sphere. Inverse problems in magneto-statics are non-unique precisely because invisible magnetizations exist, and for a generic shape it is important to understand when a magnetization can be invisible. In this paper we answer this question for a fairly general class of Lipschitz surfaces.

For a sphere, it is known that every sufficiently regular vector field  (say, of $L^2$-class) is a sum of three invisible magnetizations: loosely said, one that is invisible inside; one that is invisible outside; and one that is invisible everywhere. To make this statement more precise let $\o$ be an open region with the boundary $\bo$ and consider the following spaces:
\begin{labellist}{0.0.000}
\setlength\itemsep{-3pt}
\item[$\divf$] the space of square integrable fields that are invisible everywhere;
\item[$\Iod$] the space, orthogonal to $\divf$, of fields invisible only inside the region $\o$, and the zero field;
\item[$\iOd$] the space, orthogonal to $\divf$, of fields invisible only outside the region $\o$, and the zero field.
\end{labellist}
If $\bo$ is a sphere, $\Iod$ and $\iOd$ are mutually orthogonal and the space of $\mathbb{R}^{3}$-valued square integrable fields splits into an orthogonal direct sum
\begin{align}
	\ltd 3 = \Iod + \iOd + \divf. \label{eq:IOZ decomposition}
\end{align}
Additionally, the space $\Iod$ defines precisely those magnetizations that one can \lq\lq see'' from the outside. Because of this, decomposition \eqref{eq:IOZ decomposition} becomes an important tool to solve magnetic inverse problems on a sphere. For example,  in medical imaging it is used to process EEG/MEG measurements of the human head models \cite{lewmicfok20}; in scanning magnetic microscopy it appears in the study of planetary rock samples \cite{lima13}; it has been used to separate magnetic fields measured on satellite orbits with respect to their sources \cite{backus96,mayermaier06,olsen10b,baratchartgerhards16};  and to invert magnetic fields for spherical source currents \cite{mayer04} as well as for the lithospheric magnetization \cite{gerhards16a,gubbins11,verles19,vervelidou16}.

The second important decomposition on the sphere is the so called Hardy-Hodge decomposition 
\begin{align}
	\ltd d = \hardyp + \hardym + D_{f}(\bo); \label{eq:hhd}
\end{align}
see \cite{baratchart13,baratchart17a,gerhards16a} for the Hardy-Hodge decomposition on a plane or a sphere in various smoothness classes. The space $D_{f}(\bo)$ is the space of divergence-free fields. The spaces $\hardyp$ and $\hardym$ in \eqref{eq:hhd} are the Hardy-spaces, initially introduced on half spaces in \cite{stein1960},
studied over $C^1$-hypersurfaces in \cite{fabes1981,coifman1977}, and on Lipschitz surfaces in \cite{ver84,dahl87}. In a nutshell, these spaces define vector fields that have a harmonic extension inside or outside of the sphere.
Because harmonic functions have much stronger properties than merely square integrable functions, the Hardy-Hodge decomposition is a powerful tool in mathematical analysis. In fact it features the Calderon-Zygmund operator, which gave rise to the elliptic regularity theory \cite{stein70, steinweiss71}. Eventually, on the sphere it holds that $\Iod = \hardyp$ and $\iOd = \hardym$ which again makes \eqref{eq:hhd} pertinent to various inverse problems.

If $\bo$ is not a sphere, the decomposition \eqref{eq:IOZ decomposition} seems new. Also the Hardy-Hodge decomposition seems unknown if $\bo$ is Lipschitz (see \cite{BPT} for recent results). Consequently, also the relation between the spaces $\Iod$, $\iOd$ and $\hardyp$, $\hardym$ is missing. In this paper, we address these questions for Lipschitz surfaces and vector fields of $L^{2}$-class. We believe that the answer will allow one
to extend existing analytical techniques known on the sphere to Lipschitz surfaces. Also it could improve the accuracy of computations in cases where the sphere would only be an approximation of the actual underlying geometry. 

The main result of this paper is to prove the following statement that we have separated into several theorems for clarity:
\begin{statement}
Let $\o$ be a bounded and connected Lipschitz domain in $\mathbb{R}^{d} \ \left(d\geq 3\right)$ with a connected boundary $\bo$. The space of square integrable $\mathbb{R}^{d}$-valued vector fields on $\bo$ decomposes into the following orthogonal direct sums
\begin{align*}
	\ltd d &= \Iod \oplus \hardym \oplus \divf  &&(\text{Theorem \ref{thm:L2 decomposition in}})\\
	\ltd d &= \hardyp \oplus \iOd \oplus \divf  &&(\text{Theorem \ref{thm:L2 decomposition out}});
\end{align*}
and into the following topological direct sums
\begin{align*}
	\ltd d &= \hardyp + \hardym + D_{f}(\bo) &&(\text{Theorem \ref{thm:hardy-hodge decomposition}}) \\
	\ltd d &= \Iod + \iOd + \divf &&(\text{Theorem \ref{thm:ahh}}).
\end{align*}
These topological direct sums are orthogonal if and only if $\bo$ is a sphere (Theorem \ref{thm:HardyOrto} and Corollary \ref{cor:IODortho}); and in this case $\hardyp = \Iod$ and $\hardym = \iOd$ (Corollary \ref{cor:ieh}). Moreover, regardless of $\bo$ it always holds that $D_{f}(\bo) = \divf$.
\end{statement}

In Section \ref{sec:prelim} we set up notational conventions and discuss layer potentials. The latter are well-studied in the literature (for example, \cite{dahl87,gilmur91,ver84}) and lie at the core of all arguments in this paper. Although the main physical interest attaches to surfaces in $\mathbb{R}^3$, it would be artificial to restrict to dimension 3 and for that reason we present the material in $\mathbb{R}^d$ for $d\geq3$.

\section{Preliminaries and Notation}\label{sec:prelim}

\subsection{Conventions}

In this paper we reserve the symbol $\o$ to denote a bounded and connected strongly Lipschitz domain in the $d$-dimensional Euclidean space $\mathbb{R}^{d}$  $(d\geq3)$. Throughout, we assign the symbol $\bo$ to denote the boundary of $\o$, which is a connected and closed hypersurface, locally given as the graph of a Lipschitz function (see appendix \ref{appendix} for more details). We call $\o$ the inside of $\bo$ and $\o^{o}=\mathbb{R}^{d}\backslash\overline{\o}$ the outside. The surface measure on $\bo$ will be denoted by $\sigma$; it is the restriction to $\bo$ of the $(d-1)$-Hausdorff measure; since $\partial\Omega$ is compact, $\sigma$ is finite. Statements made almost everywhere (a.e.) on $\bo$ are always understood with respect to $\sigma$.

On $\mathbb{R}^{d}$, we denote the Euclidean scalar product by $\left\langle x, y\right\rangle_{\mathbb{R}^{d}}$ and the Euclidean norm by $|x|=\sqrt{\langle x,x\rangle_{\mathbb{R}^{d}}}$. If $f$ is a differentiable function defined on an open region in $\mathbb{R}^{d}$, then $\nabla f$ denotes the Euclidean gradient of $f$. We denote the Euclidean gradient by $\Delta$ that reads in coordinates $\Delta f = \sum_{i=1}^{d} \partial^{2}f/\partial x_{j}^{2}$.

The space $\lt\left(=L^{2}\left(\bo,\sigma\right)\right)$ comprises scalar-valued functions on $\bo$ that are square integrable with respect to $\sigma$. As usual, we identify functions that coincide a.e. on $\bo$. The subspace $\ltz\subset \lt$ consists of functions with zero mean; note that $\lt$-functions  indeed have a well-defined mean on $\bo$, since $\sigma$ is finite. We call an $\mathbb{R}^{d}$-valued function on $\bo$ a field, and we denote the space of square integrable fields by $\ltd d\left(=L^{2}\left(\bo;\mathbb{R}^{d},\sigma\right)\right)$. If $f$ and $g$ are two fields in $\ltd d$, their scalar product is $\left\langle f,g\right\rangle =\int_{\bo}\left\langle f(\Q),g(\Q)\right\rangle_{\mathbb{R}^{d}}\measure{\Q}$ and the norm of $f$ in $\ltd d$ is $\|f\|=\sqrt{\left\langle f,f\right\rangle}$. Clearly, $\ltd d$ is a Hilbert space.

The tangent space to $\bo$ at $\X$ is denoted by $\tan{\X}$; it is well-defined a.e. and so is the outer unit normal field $\eta$ (see appendix \ref{appendix} for more details).  We identify $\tan{\X}$ with a $(d-1)$-dimensional hyperplane in $\mathbb{R}^{d}$. If $f$ is a field in $\ltd d$ and for a.e. $\X$ the vector $f(\X)$ lies in $\tan{\X}$, we call $f$ a tangent field. We denote the space of all tangent fields by $\tang$; it is a closed subspace of $\ltd d$. We will often split a field $f\in \ltd d$ into a normal part $f_{\eta}\in\lt$ and the tangent part $f_{\tcomp}\in\tang$, and write $f=\eta f_{\eta} + f_{\tcomp}$.

A Lipschitz function $f:\bo\to\mathbb{R}$ is differentiable a.e. on $\bo$ and therefore it has a well-defined tangential gradient $\nablat f(\X)\in \tan{\X}$ at almost every $\X$. We define  the Sobolev space $\sob$ on $\bo$ to be the completion of Lipschitz functions for the norm $(\|f\|^2+\|\nablat f\|^2)^{1/2}$. If $f$ is in $\sob$,  {\it a fortiori} $f$ is in $\lt$ and $\nablat f$ is a tangent field in $\tang$ (see appendix \ref{appendix} for details).

If $A$ is a bounded operator then $\mathcal{N}\left[A\right]$ denotes its null space and $\mathcal{R}\left[A\right]$ its range. For  $N$ a subspace of a Hilbert space, we let $N^{\perp}$ designate its orthogonal complement. When saying that $A$ is invertible, we always mean that $A$ has a bounded inverse.

\paragraph{Regular family of cones.}
We will use a regular family of cones. Even though this family does not explicitly appear in what follows, it is fundamental to the very definition of boundary values on $\bo$ and the limiting behavior of potentials that we invoke repeatedly. The existence of a regular family of cones is folklore, but it is hard to locate a proof, see \cite{BPT}.

More precisely, for $\theta\in(0,\pi/2)$ and $y,z\in\mathbb{R}^{d}$ with $|z|=1$, we put $C_{\theta,z}(y)$ for the open, right circular, positive cone with vertex at $y$, axis directed by $z$, and  aperture angle $2\theta$; the cone being truncated to some fixed suitable length. We do not make the length explicit in the notation, for it  plays no role provided that it is small enough (how small depends on $\Omega$). To each point of $\bo$ we can attach two  ``natural'' cones with fixed aperture such that: (i) their direction is that of the graph or opposite to it; (ii) their length is small enough that one of them lies in $\Omega$ and the other in $\o^{o}$.

Let $\mathbb{S}^{d-1}$ denote the $(d-1)$-dimensional unit sphere and  $Z\colon \bo\to\mathbb{S}^{d-1}$ be a continuous function with the following property: for some $\theta_{1}<\theta<\theta_{2}$ independent of $\Q$, we require that the cone $C_{\theta,\pm Z(\Q)}(\Q)$, truncated to suitable length independent of $\Q$, contains a natural cone of aperture $2\theta_{1}$ and is contained in another natural cone of aperture $2\theta_{2}$. We may assume, replacing $Z$ by $-Z$ if necessary, that $C_{\theta,Z(\Q)}(\Q)\subset\Omega$ and $C_{\theta,-Z(\Q)}(\Q)\subset\o^{o}$.

A regular family of cones for $\Omega$ (resp. $\o^o$) is a map that associates to every $\Q\in\bo$ a cone $C_{\theta, Z(\Q)}(\Q)\subset\Omega$ (resp. $C_{\theta,- Z(\Q)}(\Q)\subset\o^o$) with $Z$ as above, compare the definition in \cite{ver84}. Hereafter, we fix such a family once and for all, and we write $\Gamma_{\inter}(\Q)$ for the inner cone at $\Q$ in this family, $\Gamma_{\exter}(\Q)$ for the outer cone at $\Q$, and $\Gamma(\Q)=\Gamma_{\inter}(\Q)\cup \Gamma_{\exter}(\Q)$ for the double cone.

Associated to the regular family of cones is a nontangential maximal function defined as follows. If $g_\inter\colon\o\to\mathbb{R}^d$ (resp. $g_\exter\colon\o^{\exter}\to\mathbb{R}^d$ ) is a function defined on $\o$ (resp. $\o^{\exter}$), we denote the nontangential maximal functions of $g$ at $\P\in\bo$ as,
\begin{align}
	g_{i}^{M}(\P) & =\sup\left\{ \left|g(\X)\right|\,:\,\X\in\Gamma_{\inter}(\P)\right\},\\
	\mathrm{(resp.\ } 
	g_{\exter}^{M}(\P) & =\sup\left\{ \left|g(\X)\right|\,:\,\X\in\Gamma_{\exter}(\P)\right\}\ 	\mathrm{)}.
\end{align}
When $g$ is defined on $\o\cup\o^{\exter}$, we set
\begin{align}
	g^{M}(\P) & =\sup\left\{ \left|g(\X)\right|\,:\,\X\in\Gamma(\P)\right\}.
\end{align}

A function $h$ on an open set $O\subset\mathbb{R}^d$ 
is harmonic if it satisfies $\Delta h=0$.  As soon as $\Delta h$ exists in
the distributional sense, $h$ is infinitely differentiable.
It follows from \cite[Sec. 5, thm.]{HuWh68} and \cite[thm. 1]{DahlbergHM1977}
that every harmonic function on $\o$ (resp. $\o^{\exter}$) whose maximal function is in $\lt$ has a nontangential limit a.e. on $\bo$, and that limit function is in $\lt$.

\subsection{Potentials}

In this section, we summarize known results about the single and the
double layer potentials. Most of the statements and 
references  or proofs for them can be found in \cite{ver84}. In the following we denote the surface area of a $d$-dimensional sphere by $\omega_{d}$. 

\paragraph*{Single layer potential.}

The single layer potential of $f\in\lt$ is
\begin{equation}
	\SLay f \left(\X\right) \doteq
	\frac{-1}{\omega_{d} 
	\left(d-2\right)} \int_{\bo} \frac{1} {\left|\X-\Q\right|^{d-2}}f\left(\Q\right)\measure{\Q}\qquad\left(\X\in\mathbb{R}^{d}\backslash\bo\right).\label{eq:single layer potential}
\end{equation}
It is harmonic on $\mathbb{R}^{d}\backslash\bo$. The result below follows for instance from
the combination of \cite[thm. 1]{DahlbergWNE80} and \cite[ch. 15, thm. 1]{CoifmanMeyer97}, see also \cite[thm. 08.D and lem. 1.3]{ver84}. 
\begin{lem}
For a.e. $\P\in\bo$, the limit,
\begin{align}
\lim_{\conlim}
\SLay f\left(\X\right) & =\pv\frac{-1}{\omega_{d}\left(d-2\right)}\int_{\bo}\frac{1}{\left|\X-\Q\right|^{d-2}}f(\Q)\measure{\Q}\doteq\slay f\left(\P\right),\label{eq:boundary single layer potential}
\end{align}
exists,  and the limit function is in $\sob$. Moreover, 
$\|(\mathcal{S}f)^M\|\leq C\|f\|$ for some constant $C=C(\bo)$.
\end{lem}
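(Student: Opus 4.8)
\emph{Proof proposal.} This lemma packages several classical facts about the single layer potential on Lipschitz domains, and the plan is to assemble them from the three ingredients already quoted: Dahlberg's nontangential maximal estimate \cite{DahlbergWNE80}, the Coifman--McIntosh--Meyer bound for Cauchy-type singular integrals \cite{CoifmanMeyer97}, and Verchota's regularity result \cite{ver84}. Concretely, I would write $E(z)=\tfrac{-1}{\omega_{d}(d-2)}|z|^{2-d}$, so that $\SLay f(\X)=\int_{\bo}E(\X-\Q)f(\Q)\measure{\Q}$, and start from the fact that $\bo$, being strongly Lipschitz, is Ahlfors regular, $\sigma(B(\P,r)\cap\bo)\simeq r^{d-1}$ for $0<r\le\operatorname{diam}\bo$; hence $(\bo,\sigma)$ is a space of homogeneous type and the surface Hardy--Littlewood maximal operator $\mathcal{M}$ is bounded on $\lt$. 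Because $d\ge3$, the kernel $E(\P-\Q)$ has size $|\P-\Q|^{2-d}$ and is \emph{integrable} against $\measure{\Q}$ on the $(d-1)$-dimensional surface $\bo$; decomposing $\bo$ into dyadic surface annuli around $\P$ gives the pointwise bound $|\slay f(\P)|\lesssim\mathcal{M}f(\P)$, so that $\slay\colon\lt\to\lt$ is bounded and the principal value in \eqref{eq:boundary single layer potential} is in fact superfluous.

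Next I would prove the maximal estimate. Fix $\P\in\bo$, $\X\in\Gamma(\P)$, and set $\delta=\operatorname{dist}(\X,\bo)$; the cone condition gives $|\X-\P|\simeq\delta$ and $|\X-\Q|\gtrsim\delta+|\P-\Q|$ for all $\Q\in\bo$, with constants depending only on the aperture. Starting from $\SLay f(\X)-\slay f(\P)=\int_{\bo}\bigl(E(\X-\Q)-E(\P-\Q)\bigr)f(\Q)\measure{\Q}$ and using the mean value estimate $|E(\X-\Q)-E(\P-\Q)|\lesssim\delta\,(\delta+|\P-\Q|)^{1-d}$ where $|\P-\Q|\gtrsim\delta$, together with $|E(\X-\Q)|+|E(\P-\Q)|\lesssim\delta^{2-d}+|\P-\Q|^{2-d}$ where $|\P-\Q|\lesssim\delta$, a dyadic decomposition yields $|\SLay f(\X)-\slay f(\P)|\lesssim\bigl(\delta+\delta\log\tfrac1\delta\bigr)\mathcal{M}f(\P)$; since $\delta$ ranges over a bounded set of small values, $\delta+\delta\log\tfrac1\delta$ stays bounded, so $(\SLay f)^{M}(\P)\lesssim|\slay f(\P)|+\mathcal{M}f(\P)$ and hence $\|(\SLay f)^{M}\|\lesssim\|\slay f\|+\|\mathcal{M}f\|\lesssim\|f\|$, which is the last assertion. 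In particular $\SLay f$ is harmonic with maximal function in $\lt$, so the theorem recalled above (\cite[Sec. 5, thm.]{HuWh68}, \cite[thm. 1]{DahlbergHM1977}) already yields a nontangential limit a.e. To identify the limit I would first take $f$ continuous: in the displayed integral, splitting $\bo$ into $B(\P,\rho)\cap\bo$ and its complement, the complement contributes $o(1)$ as $\X\to\P$ by uniform convergence of the kernel and finiteness of $\sigma$, while the ball contributes $O(\|f\|_{\infty}\rho)$ uniformly in $\X$ by the kernel size bounds, so $\SLay f(\X)\to\slay f(\P)$ everywhere; for general $f\in\lt$, I would approximate by continuous $f_{n}\to f$ in $\lt$ and bound the oscillation of $\SLay(f-f_{n})$ at $\P$ by $2(\SLay(f-f_{n}))^{M}(\P)$, whose $\lt$-norm tends to $0$, to conclude that the nontangential limit exists a.e. and equals $\slay f(\P)$ a.e.

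Finally, to show $\slay f\in\sob$, I would pass to the gradient $\nabla_{\X}\SLay f(\X)=\tfrac1{\omega_{d}}\int_{\bo}\frac{\X-\Q}{|\X-\Q|^{d}}f(\Q)\measure{\Q}$, whose kernel is a genuine Calder\'on--Zygmund kernel on $(\bo,\sigma)$ of the type of the Cauchy integral. By the Coifman--McIntosh--Meyer theorem, the associated principal-value operators and their nontangential maximal functions are bounded on $\ltd d$, so $\nabla\SLay f$ has nontangential limits a.e. from $\o$ and from $\o^{o}$, controlled in $\lt$; of these limits only the normal component jumps across $\bo$ (by $\pm\tfrac12\eta f$, depending on the side), while the tangential component agrees from both sides, and by the jump relations together with a Green-type identity on the closed surface $\bo$ --- verified for $f$ in a dense class of smooth densities, then extended by the $\lt$ bounds just obtained --- this common tangential trace represents the weak tangential gradient of $\slay f$. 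Hence $\nablat(\slay f)\in\tang$ with $\|\nablat(\slay f)\|\lesssim\|f\|$, i.e. $\slay f\in\sob$. The main obstacle is this last step: it hinges on the $\lt$-boundedness of Cauchy-type singular integrals and their maximal truncations on the Lipschitz surface $\bo$ (Coifman--McIntosh--Meyer) and on the identity matching the tangential trace of $\nabla\SLay f$ with $\nablat\slay f$ in the intrinsic sense of the completion defining $\sob$; by contrast the maximal estimate for $\SLay f$ itself and the identification of its boundary value need only $d\ge3$ and the Hardy--Littlewood maximal theorem, and the jump-relation bookkeeping is routine.
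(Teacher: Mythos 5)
The paper does not actually prove this lemma; it is quoted as a known result with references to Dahlberg, Coifman--McIntosh--Meyer, and Verchota, so there is no in-paper argument to match yours against except the appendix. Your reconstruction assembles exactly the ingredients those references supply and the first two thirds of it are sound and complete: the dyadic-annulus bound $|\slay f(\P)|\lesssim\mathcal{M}f(\P)$ (using Ahlfors regularity of $\bo$ and $d\geq 3$, which indeed makes the principal value superfluous), and the splitting of $\SLay f(\X)-\slay f(\P)$ at scale $\delta=\operatorname{dist}(\X,\bo)$ giving $(\SLay f)^{M}\lesssim|\slay f|+\mathcal{M}f$, are correct and in fact your difference bound $\delta+\delta\log\tfrac1\delta\to 0$ already yields the a.e.\ nontangential convergence directly, making the density argument via continuous $f_n$ redundant (harmlessly so).

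The one genuinely delicate point is the one you flag yourself: identifying the common tangential trace of $\nabla\SLay f$ with the weak tangential gradient of $\slay f$ in the sense of the completion defining $\sob$. Your proposed route (``Green-type identity on a dense class of smooth densities, then extend'') does not quite earn its keep, because the difficulty lives in the Lipschitz surface, not in the density: even for smooth $f$ you still need the Coifman--McIntosh--Meyer control of $\nabla\SLay f$ near $\bo$, so restricting to a dense class of $f$ simplifies nothing, and the ``Green-type identity'' is left unspecified. The paper handles precisely this step in Appendix Lemma \ref{convgrad} by a more concrete mechanism: work in a coordinate cylinder, set $h_{\varepsilon}(y)=\SLay f(y,\Psi_j(y)-\varepsilon)$, compute $\nabla h_{\varepsilon}$ by the chain rule, and pass to the limit $\varepsilon\to0$ in the weak formulation using the nontangential maximal function of $\nabla\SLay f$ as an $L^2$ dominating function. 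You should either adopt that slicing argument or make your dense-class identity explicit; as written, this step is a gap in an otherwise correct proof.
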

The operator $\slay \colon \lt \to\sob$, defined by the (weakly) singular integral in (\ref{eq:boundary single layer potential}), is a bounded linear operator \cite[lem. 1.8]{ver84}; when no confusion is possible, we also call it the single layer potential.

The following property of
$\slay$ will be important in what follows. 
\begin{thm}[{\cite[thm 3.3]{ver84}}]
\label{thm:Inverstable S}The operator $S\colon\lt \to\sob $
is invertible. 
\end{thm}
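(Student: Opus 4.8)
\emph{Plan.} Since $\slay$ is already known to be bounded from $\lt$ into $\sob$, by the open mapping theorem it is enough to prove two things: a coercivity estimate $\|f\|\leq C\|\slay f\|_{\sob}$ for every $f\in\lt$, which forces $\slay$ to be injective with closed range; and density of $\mathcal{R}[\slay]$ in $\sob$, which then promotes the closed range to all of $\sob$. I would derive the first from a Rellich identity and the second from the self-adjointness of $\slay$ on $\lt$ together with the companion invertibility of the double-layer operators.

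\emph{Step 1 (coercivity).} Put $u=\SLay f$. Because $\SLay f$ is continuous across $\bo$, its nontangential limits from $\o$ and from $\o^{o}$ both equal $\slay f$, so the tangential parts of the two one-sided boundary gradients coincide with $\nablat\slay f$, while the jump of the normal derivative is $f$ (up to the sign fixed by the Plemelj relations). I would then fix a Lipschitz vector field $h$ compactly supported near $\bo$ and transversal to it, i.e.\ $\langle h,\eta\rangle\geq\kappa>0$ a.e.\ (such an $h$ exists since $\bo$ is strongly Lipschitz), and apply the Rellich--Ne\v{c}as identity to the harmonic function $u$ once on $\o$ and once on $\o^{o}$; in the exterior case $d\geq3$ is what makes $\SLay f$ and $\nabla\SLay f$ decay fast enough for the relevant integrals to converge. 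This yields
\begin{equation*}
	\|\partial_{\eta}^{\inter}u\|^{2}+\|\partial_{\eta}^{\exter}u\|^{2}\;\leq\;C\,\|\nablat\slay f\|^{2}+C\Big(\int_{\o}|\nabla u|^{2}+\int_{\o^{o}}|\nabla u|^{2}\Big),
\end{equation*}
and the two solid integrals are bounded by $\|\slay f\|\,(\|\partial_{\eta}^{\inter}u\|+\|\partial_{\eta}^{\exter}u\|)$ via Green's formula, so after absorbing them with Young's inequality I would get $\|\partial_{\eta}^{\inter}u\|+\|\partial_{\eta}^{\exter}u\|\leq C(\|\nablat\slay f\|+\|\slay f\|)$. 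Since $f$ equals the jump $\partial_{\eta}^{\exter}u-\partial_{\eta}^{\inter}u$, this is exactly $\|f\|\leq C\|\slay f\|_{\sob}$; injectivity and closedness of the range follow at once.

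\emph{Step 2 (density of the range).} The kernel $|\X-\Q|^{2-d}$ is symmetric, so $\slay$ is self-adjoint as a bounded operator on $\lt$; by Step 1 it is injective on $\lt$, hence $\overline{\mathcal{R}[\slay]}^{\lt}=\lt$. To upgrade this to density in $\sob$ I would run the analogous Rellich argument for the double layer to obtain that $\tfrac12 I+\dlay^{\ast}$ is invertible on $\lt$ and $-\tfrac12 I+\dlay^{\ast}$ on $\ltz$ --- the injectivity of the adjoints $\pm\tfrac12 I+\dlay$ coming from uniqueness for the interior Dirichlet and the exterior Neumann problem together with the continuity of $\partial_{\eta}\DLay$ across $\bo$. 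Then, given $g\in\sob$, I would solve the interior and exterior regularity--Dirichlet problems with datum $g$, glue the two solutions into a function $w$ on $\mathbb{R}^{d}\setminus\bo$ that is continuous across $\bo$ with trace $g$, decays at infinity, and has $L^{2}$ normal-derivative jump $\phi$; subtracting $\SLay\phi$ leaves a globally harmonic function vanishing at infinity, whence $w=\SLay\phi$ and $g=\slay\phi\in\mathcal{R}[\slay]$. With the closed-range property this gives $\mathcal{R}[\slay]=\sob$, and combined with the coercivity estimate the inverse $\slay^{-1}\colon\sob\to\lt$ is bounded.

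\emph{Main obstacle.} The genuine difficulty is that on a surface that is Lipschitz but not $C^{1}$ the operators $\dlay,\dlay^{\ast}$ are not compact, so there is no Fredholm shortcut; the Rellich--Ne\v{c}as identity is the only substitute for coercivity, and it delivers the sharp estimate only once one knows the deep harmonic-analysis fact already invoked in the excerpt, namely that the nontangential maximal function of $\nabla\SLay f$ lies in $\lt$ with norm $\lesssim\|f\|$ (the Coifman--McIntosh--Meyer bound). I also expect to spend effort absorbing the Dirichlet-integral terms in the Rellich identity and on the exterior estimate, where $d\geq3$ is used essentially, and on the surjectivity step, which is delicate precisely because $\sob$ is not its own dual and must be reached through the companion invertibility of $\pm\tfrac12 I+\dlay^{\ast}$.
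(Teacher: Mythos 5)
The paper does not prove this statement; it quotes it from Verchota \cite[thm.~3.3]{ver84}, so there is no internal proof to compare against. Your sketch is essentially the classical argument behind that citation: Rellich--Ne\v{c}as identities with a transversal field to get $\|f\|\leq C\|\slay f\|_{\sob}$ (hence injectivity and closed range), with the Coifman--McIntosh--Meyer bound as the deep input, and the jump relation $\partial_\eta^{\exter}\SLay f-\partial_\eta^{\inter}\SLay f=f$ correctly identified. Step~1 is sound as a plan, granting the standard (nontrivial) approximation arguments needed to justify Green's formula and the Rellich identity for harmonic functions controlled only through nontangential maximal functions on a Lipschitz boundary.

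The one point you should repair is in Step~2. You propose to obtain surjectivity by solving the interior and exterior $L^2$-regularity problems with datum $g\in\sob$ and then identifying the glued solution with $\SLay\phi$, $\phi$ the normal-derivative jump. But in Verchota's development the solvability of the regularity problem is itself \emph{deduced} from the invertibility of $\slay\colon\lt\to\sob$ (the solution is exhibited as $\SLay(\slay^{-1}g)$), so as written your argument is circular unless you invoke an independent proof of the regularity problem (Jerison--Kenig's, via Rellich estimates and approximation by smooth domains) --- and you should say so explicitly. The alternative, which is closer to what Verchota actually does, is to get surjectivity from the a priori estimate by a continuity/approximation argument: the Rellich constants are uniform over a family of smooth domains increasing to $\Omega$, invertibility is classical on smooth domains, and the closed-range estimate transfers surjectivity in the limit. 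Either route works, but the step needs one of these two inputs named; density of $\mathcal{R}[\slay]$ in $\lt$ via self-adjointness does not by itself give density in the strictly stronger $\sob$ topology, as you partly acknowledge. A similar remark applies to your parenthetical derivation of the invertibility of $\frac12+\dlay^{\star}$ on $\lt$ and $\frac12-\dlay^{\star}$ on $\ltz$: the injective-adjoint-plus-closed-range argument you indicate is fine and is the standard way to close that loop.
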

\begin{rem}
The previous results hold for a more general range of exponents, but for the 
purpose of this paper statements in $L^2$ will suffice.
\end{rem}

\paragraph{The gradient of $\protect\SLay f$.}

The Euclidean gradient of the single layer potential is,
\begin{equation}
\nabla\SLay f\left(\X\right)\doteq\frac{1}{\omega_{d}}\int_{\bo}\frac{\X-\Q}{\left|\X-\Q\right|^{d}}f(\Q)\measure{\Q}\qquad\left(\X\in\mathbb{R}^{d}\backslash\bo\right).\label{eq:gradient of SLP for scalars}
\end{equation}
Each of the vector components of $\nabla\SLay f $ is a harmonic function on $\mathbb{R}^{d}\backslash\bo$ and it holds that $\|\left(\nabla\SLay f\right)^{M}\|<C\|f\|$ for some constant $C=C(\Omega)$. This last fact follows from \cite[ch. 15, thm. 1]{CoifmanMeyer97} (see also \cite[lem. 1.3]{ver84}). Thus,
$\nabla\mathcal{\SLay}f$ has nontangential limits a.e. on $\bo$ from each side, that define two fields in $T(\bo)$. We shall describe their tangential and normal components separately. We begin with the tangential component, which is the same from either side
(compare \cite[thm 1.6.]{ver84}):

\begin{lem}
For every tangent field $\tau\in\tang$ 
 the limit
\begin{align}
  \lim_{\conlim}\left\langle \tau(\P),\nabla\SLay f\left(\X\right)\right\rangle _{\mathbb{R}^{d}} & =\pv\frac{1}{\omega_{d}}\int_{\bo}\frac{\left\langle \tau(p),\P-\Q\right\rangle _{\mathbb{R}^{d}}}{\left|\P-\Q\right|^{d}}f(\Q)\measure{\Q}
                                                                                                    \nonumber\\
 & =
\left\langle 
\tau(\P), \nablat\slay f \left(\P\right)
\right\rangle _{\mathbb{R}^{d}}
\label{eq:limit of SLP}
\end{align}
exists at a.e. $\P\in\bo$, and defines the tangent field $\nablat\slay f\in \tang$. Moreover, $\|\nablat\slay f\|\leq C \|f\|$.
\end{lem}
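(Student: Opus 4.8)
The plan is to read off the easy assertions from the results already quoted, to prove the nontangential limit for a dense class of densities, and then to pass to arbitrary $f\in\lt$ via the maximal function bound $\|(\nabla\SLay f)^M\|\le C\|f\|$.

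The last assertion and $\nablat\slay f\in\tang$ are immediate: since $\slay\colon\lt\to\sob$ is bounded, $\nablat\slay f$ is a genuine tangent field with $\|\nablat\slay f\|\le\|\slay f\|_{\sob}\le C\|f\|$. Taking the inner product of \eqref{eq:gradient of SLP for scalars} with $\tau(\P)$ gives, for $\X\notin\bo$,
\[
\left\langle\tau(\P),\nabla\SLay f(\X)\right\rangle_{\mathbb{R}^d}=\frac1{\omega_d}\int_{\bo}\frac{\left\langle\tau(\P),\X-\Q\right\rangle_{\mathbb{R}^d}}{|\X-\Q|^d}\,f(\Q)\measure{\Q},
\]
so everything reduces to showing that the nontangential limit of the right-hand side exists a.e.\ and equals both the stated principal value and $\langle\tau(\P),\nablat\slay f(\P)\rangle_{\mathbb R^d}$. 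Two standard facts are used. (i) Tested against the continuous unit direction field, the kernel $(\P-\Q)/|\P-\Q|^d$ is, componentwise, a Calderón--Zygmund kernel on the Lipschitz surface $\bo$; by the Coifman--McIntosh--Meyer theorem (in the form used in \cite{ver84,CoifmanMeyer97}) its truncated integrals converge a.e., the principal value in \eqref{eq:limit of SLP} exists a.e., and the associated operator is bounded on $\lt$. (ii) By the jump relations for the single layer potential, the two one-sided nontangential limits of $\nabla\SLay f$ differ by a purely normal field (a multiple of $f\eta$), so their tangential components agree; this is what makes the limit over the double cone $\Gamma(\P)$ meaningful and lets one argue with a single one-sided cone.

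With these in hand I would first treat $f$ in a dense subspace $\mathcal D\subset\lt$, say Lipschitz densities. For such $f$, splitting the integrals at radius comparable to $\mathrm{dist}(\X,\bo)$ and using the Lipschitz bound on $f$ together with the Calderón--Zygmund estimates shows that the nontangential limit of $\langle\tau(\P),\nabla\SLay f(\X)\rangle_{\mathbb R^d}$ exists a.e.\ and equals the truncation-limit principal value, and a (regularized) tangential differentiation of \eqref{eq:boundary single layer potential} identifies that principal value with $\langle\tau(\P),\nablat\slay f(\P)\rangle_{\mathbb R^d}$; so the lemma holds on $\mathcal D$. For general $f\in\lt$, choose $f_n\to f$ in $\lt$ with $f_n\in\mathcal D$ and put $v_g(\X)=\langle\tau(\P),\nabla\SLay g(\X)\rangle_{\mathbb R^d}$. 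From $|v_f(\X)-v_{f_n}(\X)|\le|\tau(\P)|\,(\nabla\SLay(f-f_n))^M(\P)$ for $\X\in\Gamma(\P)$, the oscillation of $v_f$ at $\P$ is at most $2|\tau(\P)|\,(\nabla\SLay(f-f_n))^M(\P)$; since $\|(\nabla\SLay(f-f_n))^M\|\le C\|f-f_n\|\to0$ and $\|\nablat\slay(f-f_n)\|\le C\|f-f_n\|\to0$, a subsequence makes both $0$ a.e., so $v_f$ has a nontangential limit a.e., it equals the a.e.\ limit of $\langle\tau(\P),\nablat\slay f_n(\P)\rangle_{\mathbb R^d}$, i.e.\ $\langle\tau(\P),\nablat\slay f(\P)\rangle_{\mathbb R^d}$, and by the $\lt$-boundedness in (i) it also equals the principal value of $f$.

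The real obstacle is the base case on a merely Lipschitz surface: one cannot invoke the classical statement that the single layer potential of a Hölder density is $C^1$ up to the boundary (that needs $\bo\in C^{1,\alpha}$), so even for smooth densities the existence of the principal value and its equality with the nontangential limit has to be extracted from the Calderón--Zygmund / Coifman--McIntosh--Meyer machinery; equivalently, the crux is that the tangential gradient commutes with the nontangential trace of $\SLay f$. Once that and the maximal function estimate are granted, the norm bound, the reduction to $\mathcal D$, and the limiting argument are routine.
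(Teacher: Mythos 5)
Your outline gets the easy parts right (the norm bound from $\slay\colon\lt\to\sob$, the existence of the one-sided nontangential limits from the maximal-function estimate, the existence of the principal value from Coifman--McIntosh--Meyer/Verchota, and the agreement of the tangential parts from both sides), and the density-plus-maximal-function transfer from a dense class to general $f$ is a correct standard device. But the one assertion that actually requires a new argument here is precisely the one you defer: that the tangential component of the nontangential limit of $\nabla\SLay f$ coincides with $\nablat(\slay f)$, where $\nablat$ is the tangential gradient of the \emph{Sobolev} function $\slay f\in\sob$ as defined by completion of Lipschitz functions. You name this the ``crux'' and say it ``has to be extracted from the Calder\'on--Zygmund machinery,'' but no such extraction is given; and your proposed base case --- a ``(regularized) tangential differentiation'' of the boundary integral \eqref{eq:boundary single layer potential} for Lipschitz densities --- is exactly the step that is not available on a merely Lipschitz surface, for the same reason the classical $C^{1,\alpha}$ argument fails. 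So the density reduction does not help: the gap sits in the base case, not in the limiting procedure.

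For comparison, the paper closes this gap in Appendix Lemma \ref{convgrad} by an argument that needs no dense subclass and works directly for every $f\in\lt$: in a coordinate cylinder one considers the parallel surfaces $y\mapsto(y,\Psi_j(y)-\varepsilon)$, sets $h_\varepsilon(y)=\SLay f(y,\Psi_j(y)-\varepsilon)$ (a Lipschitz function of $y$, since $\SLay f$ is smooth off $\bo$), computes $\nabla h_\varepsilon$ by the chain rule, and lets $\varepsilon\to0$. Pointwise a.e.\ convergence of $\nabla\SLay f$ along the inner cones, dominated by $(\nabla\SLay f)^M\circ\phi_j^{-1}\in L^2(\mathbb{B}_j)$, gives $L^2$ convergence of $\nabla h_\varepsilon$ and of $h_\varepsilon$; passing to the limit in $\int\partial_{y_i}h_\varepsilon\,\varphi=-\int h_\varepsilon\,\partial_{y_i}\varphi$ then identifies the weak gradient of $\slay f\circ\phi_j^{-1}$ with $(D\phi_j^{-1})^tF\circ\phi_j^{-1}$, i.e.\ the tangential part of the boundary limit $F$ is $\nablat\slay f$. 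Some version of this (or an equivalent justification that tangential differentiation commutes with the nontangential trace) is what your proof still needs.
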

\begin{proof}
See Lemma \ref{convgrad} in Appendix \ref{appendix}.
\end{proof}

We can define the operator $\nablat\slay\colon\lt \to\tang$ such that $\left(\nablat\slay\right)f=\nablat \left(\slay f\right)$. By the above lemma, $\nablat\slay$ is bounded and linear.

\paragraph{The scalar potential.}

The scalar potential of a field $f\in\ltd d$ is 
\begin{equation}
\mathcal{P}f\left(\X\right)\doteq\frac{1}{\omega_{d}}\int_{\bo}\frac{\left\langle \Q-\X,f(\Q)\right\rangle _{\mathbb{R}^{d}}}{\left|\Q-\X\right|^{d}}\measure{\Q}\qquad\left(\X\in\mathbb{R}^{d}\backslash\bo\right).\label{eq:scalar potential}
\end{equation}
It is harmonic on $\mathbb{R}^{d}\backslash\bo$. 

By definition, the scalar potential is a constant multiple of
the single layer potential of the divergence of $f$,
and it behaves much like the Euclidean gradient of a single layer potential except that
it acts on fields rather than  functions. Note that the integral kernel of $\mathcal{P}$ differs from the integral kernel of $\nabla\SLay\:$ by a minus sign.
As the next lemma shows, it is convenient to define 
the scalar potential in this way.
\begin{lem}
  \label{gradTP}
  For every tangent field $\tau\in\tang$,
the limit
\begin{equation}
\lim_{\conlim}\mathcal{P}\tau\left(\X\right)=\pv\frac{1}{\omega_{d}}\int_{\bo}\frac{\left\langle \Q-\P,\tau(\Q)\right\rangle _{\mathbb{R}^{d}}}{\left|\P-\Q\right|^{d}}\measure{\Q}\doteq\left(\nablat\slay\right)^{\star}\tau\left(\P\right),\label{eq:limit of grad-S-star}
\end{equation}
exists at a.e. $\P\in\bo$, and the limit function is in $\lt$. 
\end{lem}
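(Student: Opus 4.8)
The plan is to pin down the boundary operator $(\nablat\slay)^{\star}$ appearing in \eqref{eq:limit of grad-S-star} in two steps: (a) show that the principal-value integral displayed there really is the $\lt$-adjoint of the bounded operator $\nablat\slay$ from the preceding lemma, by transposing kernels; and (b) show that this adjoint is the a.e.\ nontangential limit of $\mathcal{P}\tau$. One orienting remark first: the kernel of $\mathcal{P}$ in \eqref{eq:scalar potential} is $\omega_{d}^{-1}(\Q-\X)|\Q-\X|^{-d}=-(d-2)^{-1}\nabla_{\Q}|\X-\Q|^{2-d}$, so for a \emph{Lipschitz} tangent field $\tau$, fixing $\X\notin\bo$ (so that $\Q\mapsto|\X-\Q|^{2-d}$ is smooth) and integrating by parts on the closed surface $\bo$ --- the normal component of $\nabla_{\Q}$ not contributing, since $\tau$ is tangent --- gives $\mathcal{P}\tau(\X)=-\SLay(\divt\tau)(\X)$ on $\mathbb{R}^{d}\setminus\bo$; hence in that case the limit exists and equals $-\slay(\divt\tau)\in\sob\subset\lt$ by the single-layer lemma. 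This control is only in terms of $\|\divt\tau\|$, not $\|\tau\|$, so I would use it merely as a consistency check and argue through the adjoint instead.

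For step (a): set $K_{\varepsilon}(\P,\Q)=\omega_{d}^{-1}(\P-\Q)|\P-\Q|^{-d}\mathbf{1}_{\{|\P-\Q|>\varepsilon\}}$, a bounded kernel, and $\nablat\slay^{(\varepsilon)}g(\P)=\int_{\bo}K_{\varepsilon}(\P,\Q)g(\Q)\measure{\Q}$. By the Calder\'on--Zygmund theory of layer potentials underlying the preceding lemma (\cite[ch.\ 15]{CoifmanMeyer97}, \cite{ver84}), the $\nablat\slay^{(\varepsilon)}$ are uniformly bounded on $\lt$ and converge strongly and a.e.\ to $\nablat\slay$. Since $K_{\varepsilon}$ is bounded and $\sigma$ is finite, Fubini gives $\langle\nablat\slay^{(\varepsilon)}g,\tau\rangle=\langle g,T^{(\varepsilon)}\tau\rangle$ for all $g\in\lt$, $\tau\in\tang$, with $T^{(\varepsilon)}\tau(\Q)=\int_{\bo}\langle K_{\varepsilon}(\P,\Q),\tau(\P)\rangle\measure{\P}$. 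Letting $\varepsilon\to0$ on the left shows $T^{(\varepsilon)}\tau\to(\nablat\slay)^{\star}\tau$ weakly in $\lt$; and since the transposed kernel is again an odd, $(1-d)$-homogeneous Calder\'on--Zygmund kernel, $T^{(\varepsilon)}\tau$ converges a.e.\ as well, necessarily to the same function. Hence $\pv\,\omega_{d}^{-1}\int_{\bo}\langle\Q-\P,\tau(\Q)\rangle|\P-\Q|^{-d}\measure{\Q}$ exists a.e., lies in $\lt$ (being the image of $\tau$ under the adjoint of a bounded operator), and coincides with $(\nablat\slay)^{\star}$, which is what justifies the notation in \eqref{eq:limit of grad-S-star}.

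For step (b): $\mathcal{P}\tau$ is harmonic on $\mathbb{R}^{d}\setminus\bo$, and its kernel is, up to sign and an interchange of the two variables, that of $\nabla\SLay$, so $\|(\mathcal{P}\tau)^{M}\|\le C\|\tau\|$ by \cite[ch.\ 15, thm.\ 1]{CoifmanMeyer97}; the results recalled in Section~\ref{sec:prelim} (\cite{HuWh68,DahlbergHM1977}) then yield nontangential limits of $\mathcal{P}\tau$ a.e.\ from $\o$ and from $\o^{\exter}$, each dominated by $(\mathcal{P}\tau)^{M}$ and hence in $\lt$. To identify them with $(\nablat\slay)^{\star}\tau$, use $\mathcal{P}f=-\sum_{j=1}^{d}\partial_{x_{j}}\SLay(f_{j})$: the classical jump relations for the normal derivative of the single layer potential on a Lipschitz surface show that the two one-sided limits of $\mathcal{P}f$ differ by a multiple of $\langle\eta,f\rangle=f_{\eta}$, which vanishes for the tangent field $f=\tau$; so the two one-sided limits coincide and, by the companion Plemelj-type identity, equal the principal-value integral, i.e.\ $(\nablat\slay)^{\star}\tau$ by step (a). (Alternatively, one checks this identity on Lipschitz tangent fields through $\mathcal{P}\tau=-\SLay(\divt\tau)$ together with $\langle-\slay(\divt\tau),g\rangle=\langle\tau,\nablat\slay g\rangle$ --- integration by parts on $\bo$ against Lipschitz $g$, using $\slay g\in\sob$ --- and then extends to all of $\tang$ by density and the bound $\|(\mathcal{P}\tau)^{M}\|\le C\|\tau\|$.)

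The step I expect to be the main obstacle is the identification in (b) on a merely Lipschitz surface: that the nontangential limit coincides with the principal value rests on the Calder\'on--Zygmund theory of layer potentials (the jump relations and a.e.\ existence of principal values, \cite{CoifmanMeyer97,ver84}), not on any elementary estimate. A secondary subtlety lies in (a): strong operator convergence $\nablat\slay^{(\varepsilon)}\to\nablat\slay$ does not by itself pass to the adjoints, which is why one must pair the uniform Calder\'on--Zygmund bound with the a.e.\ convergence of the transposed truncated integrals.
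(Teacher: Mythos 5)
Your proof is correct and follows the route the paper itself only sketches: the lemma is stated there without proof, the intended argument being exactly your step (b) --- write $\mathcal{P}\tau=-\sum_{j}\partial_{x_{j}}\SLay(\tau_{j})$, invoke the known $L^2$ maximal-function bound and jump relations for $\nabla\SLay$, and observe that the normal jump $\pm\tfrac{1}{2}\langle\eta,\tau\rangle$ vanishes for a tangent field, so both one-sided nontangential limits equal the principal value. Your step (a) correctly supplies the adjoint identification that the paper likewise only asserts after \eqref{eq:limit of SLP}.
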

The operator $\left(\nablat\slay\right)^{\star}\colon\tang\to\lt,$
defined by (\ref{eq:limit of grad-S-star}), is linear and bounded,
and one can see from \eqref{eq:limit of SLP} that $\left(\nablat\slay\right)^{\star}$ is
indeed the $L^{2}$-adjoint of $\nablat\slay$. 

The limits in (\ref{eq:limit of SLP}) and (\ref{eq:limit of grad-S-star})
are independent of the components of the cone $\Gamma(\P)$. That
is, the tangent part of $\nabla\SLay\:$ transitions continuously
through the boundary $\bo$. Contrary to this, the normal component
of $\nabla\SLay\:$ has a jump across the boundary. This behavior is 
will be described
by the double layer potential. 
\begin{rem}
\label{rem:weak divergence}
The operator
$\left(\nablat\slay\right)^{\star}$ is closely connected with the divergence operator: for 
$f\in\tang$, its divergence $\divt f$ is a continuous functional on $\sob$ 
acting on $h$ by the rule $\langle \divt f,h\rangle=-\langle f,\nablat h\rangle$. Now, for  $g\in\lt$, we get that
\[
\left\langle \left(\nablat\slay\right)^{\star}f,g\right\rangle 
=\left\langle f,\nablat\slay g\right\rangle 
=-\left\langle\divt f,  \slay g\right\rangle.
\]
If we identify the dual space of $\lt$ with $\lt$ itself and the dual space of $\sob$ with
the Sobolev space $W^{-1,2}(\bo)$ of negative exponent\footnote{This goes as in the Euclidean case, see \cite[sec. 3.13]{adams03}},
it holds that
$\left(\nablat\slay\right)^{\star}=S^*\divt$ where $S^*\colon W^{-1,2}(\bo)\to L^2(\bo)$. In particular, since $S$ is invertible by Theorem \ref{thm:Inverstable S},
so is $S^*$ and we conclude that $\mathcal{N}\left[\left(\nablat\slay\right)^{\star}\right]$ consists exactly of divergence-free vector fields in $\tang$.
\end{rem}

\paragraph{Double layer potential.}

Let $\eta$ denote the outward unit normal vector on $\bo$. For
$f_{\eta}\in\lt$, the double layer potential of $f_{\eta}$ is
\[
\DLay f_{\eta}\left(\X\right)=\frac{1}{\omega_{d}}\int_{\bo}\frac{\left\langle \Q-\X,\eta(\Q)\right\rangle _{\mathbb{R}^{d}}}{\left|\Q-\X\right|^{d}}f_{\eta}(q)\measure{\Q}=\mathcal{P}(\eta f_{\eta})\left(\X\right),\qquad\left(\X\in\mathbb{R}^{d}\backslash\bo\right)
\]
and the (boundary) double layer potential is
\begin{equation}
\dlay f_{\eta}\left(\P\right)=\pv\frac{1}{\omega_{d}}\int_{\bo}\frac{\left\langle \Q-\P,\eta(q)\right\rangle _{\mathbb{R}^{d}}}{\left|\Q-\P\right|^{d}}f_\eta(q)\measure{\Q},
\end{equation}
defined for a.e. $\P$ on $\bo$. The operator $\dlay\colon\lt \to\lt$
is linear and bounded; we denote its $L^{2}$-adjoint by $\dlay^{\star}$.
These operators qualify the boundary behavior  of $\DLay$ by  the following
Lemma \cite[thms. 1.10 and 1.11]{ver84}.
\begin{lem}
For $f\in\lt$ and for a.e. $\P\in\bo$ it holds that
\begin{align}
\lim_{\conlimi}\DLay f\left(\X\right) & =\left(\frac{1}{2}+\dlay\right)f\left(\P\right),\label{eq:limit for double layer potential}\\
\lim_{\conlimo}\DLay f\left(\X\right) & =-\left(\frac{1}{2}-\dlay\right)f\left(\P\right),\label{eq:limit for the double layer poential outside}
\end{align}
 and 
\begin{align}
\lim_{\conlimi}\left\langle \eta(\P),\nabla\SLay f\left(\X\right)\right\rangle _{\mathbb{R}^{d}} & =-\left(\frac{1}{2}-\dlay^{\star}\right)f\left(\P\right),\label{eq:limit of *double layer potential from inside}\\
\lim_{\conlimo}\left\langle \eta(\P),\nabla\SLay f\left(\X\right)\right\rangle _{\mathbb{R}^{d}} & =\left(\frac{1}{2}+\dlay^{\star}\right)f\left(\P\right).\label{eq:limit for *double layer potential from outside}
\end{align}
\end{lem}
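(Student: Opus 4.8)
The four displayed formulas are equivalent to two pairs of statements, one for $\DLay$ and one for the normal component of $\nabla\SLay$, each pair recording the value of a \emph{jump} (the difference of the two one-sided limits) and of an \emph{average} (their half-sum). The plan is to first secure existence of all four limits as bounded maps on $\lt$, then reduce the lemma to these jump/average identities, and finally establish the jumps by an elementary Green's-identity computation and the averages by singular-integral theory. Existence is essentially already in place: each scalar component of $\DLay f$ and of $\nabla\SLay f$ is harmonic on $\mathbb{R}^{d}\setminus\bo$, we have the bound $\|(\nabla\SLay f)^{M}\|\le C\|f\|$ stated above, and the analogous bound $\|(\DLay f)^{M}\|\le C\|f\|$ follows from the same Coifman--Meyer estimate because $\DLay f=\mathcal{P}(\eta f)$ and $\mathcal{P}$ carries a Calder\'on--Zygmund kernel of the same type as $\nabla\SLay$. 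By the nontangential convergence theorem recorded in the ``Regular family of cones'' paragraph, each of the four functions therefore has a nontangential limit a.e.\ from the relevant side, lying in $\lt$; moreover the maps sending $f$ to these limits are bounded on $\lt$, being dominated by the maximal-function operators. So the four limits exist; the work is to identify them.

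Writing $u=\SLay f$, the fundamental-solution identity $\Delta\!\big(\tfrac{-1}{\omega_{d}(d-2)}|\X|^{2-d}\big)=\delta$ gives, in the distributional sense on $\mathbb{R}^{d}$, that $\Delta u=f\,\sigma$, the surface measure weighted by $f$. Testing this against $\phi\in C^{\infty}_{c}(\mathbb{R}^{d})$ and integrating by parts separately over $\o$ and $\o^{\exter}$ (using that $u$ is continuous across $\bo$ and harmonic on each side) pins down the normal-derivative jump
\[
\lim_{\conlimo}\langle\eta,\nabla\SLay f\rangle-\lim_{\conlimi}\langle\eta,\nabla\SLay f\rangle=f\qquad\text{a.e. on }\bo .
\]
For the double layer, the companion jump $\lim_{\conlimi}\DLay f-\lim_{\conlimo}\DLay f=f$ comes from the solid-angle computation $\DLay 1=\mathbf{1}_{\o}$ (itself the Gauss--Green theorem applied to $\nabla_{\Q}|\X-\Q|^{2-d}$), combined with the fact that, for a density Lipschitz at $\P$, the remainder $\DLay\!\big(f-f(\P)\big)$ is continuous across $\bo$ at $\P$ and so contributes equally to both one-sided limits. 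Granting in addition the two \emph{average} identities
\[
\tfrac12\big(\lim_{\conlimi}+\lim_{\conlimo}\big)\DLay f=\dlay f,\qquad \tfrac12\big(\lim_{\conlimi}+\lim_{\conlimo}\big)\langle\eta,\nabla\SLay f\rangle=\dlay^{\star}f,
\]
the four displayed formulas follow at once by solving the two resulting $2\times2$ linear systems in the one-sided limits.

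The real content, and the main obstacle, is the pair of average identities, namely that the half-sum of the one-sided nontangential limits equals the principal-value operator. This is exactly where the Lipschitz hypothesis bites: the kernels $\langle\Q-\P,\eta(\Q)\rangle/|\Q-\P|^{d}$ and $\langle\P-\Q,\eta(\P)\rangle/|\P-\Q|^{d}$ are genuinely singular rather than weakly singular (as they would be on a $C^{1,\alpha}$ surface), so both the a.e.\ existence of the principal values and the $\lt$-boundedness of $\dlay,\dlay^{\star}$ rest on the Coifman--McIntosh--Meyer theorem on the $L^{2}$-boundedness of the Cauchy integral over Lipschitz graphs (invoked through \cite{CoifmanMeyer97}). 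I would first prove the averages for the dense class of Lipschitz densities $f$, for which $f(\Q)-f(\P)=O(|\Q-\P|)$ tames the kernel: split each integral into a piece near $\P$ and a far piece, note that the far piece depends continuously on $\X$ up to $\P$, and reduce the near piece, via Rademacher's theorem (a tangent plane exists a.e.\ on $\bo$), to the explicit half-space model, where the two one-sided limits are symmetric about the principal value and differ from it by exactly $\pm\tfrac12 f(\P)$. I would then extend from Lipschitz densities to all of $\lt$ by density, using the boundedness of the limit maps from the first paragraph together with the $\lt$-boundedness of $\dlay,\dlay^{\star}$. Finally, since $\dlay^{\star}$ is the $L^{2}$-adjoint of $\dlay$ and the two kernels are mutually transpose, the double-layer and normal-single-layer statements are carried out in parallel (or one deduced from the other by duality), so no new estimate is needed for the second pair.
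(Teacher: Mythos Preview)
The paper does not prove this lemma at all: it simply records it with the citation ``\cite[thms.\ 1.10 and 1.11]{ver84}'' and moves on. So there is no proof in the paper for your proposal to be compared against.

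That said, your sketch is a correct outline of the standard argument, and it is essentially the route taken in Verchota's paper that the authors cite. The decomposition into jump and average, the Green's-identity derivation of the jumps (using $\Delta\SLay f=f\sigma$ and the solid-angle identity $\DLay 1=\mathbf{1}_{\o}$), the reduction of the averages to the existence and $L^2$-boundedness of the principal-value operators via Coifman--McIntosh--Meyer, and the density argument from Lipschitz to $L^2$ data are all the right ingredients. One small point worth tightening: when you argue that $\DLay(f-f(\P))$ is continuous across $\bo$ at $\P$ for Lipschitz $f$, on a merely Lipschitz boundary the kernel $\langle\Q-\X,\eta(\Q)\rangle/|\Q-\X|^{d}$ does not enjoy the extra cancellation it has on smoother surfaces, so you should say explicitly that the factor $|f(\Q)-f(\P)|\le C|\Q-\P|$ renders the integrand $O(|\Q-\X|^{1-d})$, hence absolutely integrable and continuous in $\X$ through $\P$ by dominated convergence. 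With that clarification your argument is sound, but for the purposes of this paper the lemma is treated as background and a citation suffices.
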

The next two results will be much used in what follows.
\begin{lem}[{\cite[thm. 3.3]{ver84}}]%
\label{lem:invertable double layer Potential}The operators 
\begin{align*}
	\left( \frac{1}{2}+\dlay \right)
		\colon
	\lt & \to\lt & \text{and} &  & 
	\left( \frac{1}{2}-\dlay^{\star} \right) & 
		\colon \ltz\to\ltz,
\end{align*}
are invertible.
\end{lem}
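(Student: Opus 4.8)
The plan is to follow the classical Rellich-identity argument (due to Verchota), which I outline in four steps.

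\emph{Step 1 (Rellich estimate).} Since $\bo$ is strongly Lipschitz, one can produce a vector field $\alpha\in C^{\infty}_{c}(\mathbb{R}^{d};\mathbb{R}^{d})$ with $\langle\alpha(\X),\eta(\X)\rangle_{\mathbb{R}^{d}}\ge\kappa>0$ for a.e.\ $\X\in\bo$ (patch together, via a partition of unity, directions transversal to each Lipschitz graph chart). For $u$ harmonic on $\o$ with $(\nabla u)^{M}\in\lt$, integrating $\mathrm{div}\big(\langle\alpha,\nabla u\rangle\nabla u-\tfrac12\alpha\,|\nabla u|^{2}\big)$ over $\o$ produces the Rellich identity
\begin{align*}
\int_{\bo}\langle\alpha,\eta\rangle_{\mathbb{R}^{d}}\,|\nabla u|^{2}\measure{\X}
&=2\int_{\bo}\langle\alpha,\nabla u\rangle_{\mathbb{R}^{d}}\,\partial_{\eta}u\measure{\X}\\
&\quad+\int_{\o}\Big[(\mathrm{div}\,\alpha)\,|\nabla u|^{2}-2\langle(D\alpha)\nabla u,\nabla u\rangle_{\mathbb{R}^{d}}\Big]\,d\X.
\end{align*}
Splitting $\nabla u=\nablat u+\eta\,\partial_{\eta}u$ on $\bo$, using $\langle\alpha,\eta\rangle_{\mathbb{R}^{d}}\ge\kappa$ and Young's inequality, and keeping the solid integral as a remainder, I would extract $\|\partial_{\eta}u\|_{\lt}\le C\|\nablat u\|_{\lt}+(\mathrm{rem})$ together with the reverse inequality, and the analogous pair on $\o^{\exter}$ (for $u$ harmonic and decaying there, the sphere-at-infinity contribution dropping out because $d\ge3$).

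\emph{Step 2 (a priori estimates).} Apply Step 1 to $u=\SLay f$. The tangential gradient agrees from both sides and equals $\nablat\slay f$; the interior normal derivative is $-(\tfrac12-\dlay^{\star})f$, the exterior one $(\tfrac12+\dlay^{\star})f$, and their difference is $f$. Chaining the interior and exterior estimates through the common quantity $\|\nablat\slay f\|$ gives $\|(\tfrac12+\dlay^{\star})f\|\le C\|(\tfrac12-\dlay^{\star})f\|+(\mathrm{rem})$ and conversely, hence
\[
\|f\|_{\lt}\le\|(\tfrac12+\dlay^{\star})f\|_{\lt}+\|(\tfrac12-\dlay^{\star})f\|_{\lt}\le C\,\|(\tfrac12\pm\dlay^{\star})f\|_{\lt}+C\,\|\slay f\|_{\lt}.
\]
The remainder is controlled by $\|\slay f\|_{\lt}$ (or a comparable norm of $\nabla\SLay f$ on a bounded collar of $\bo$), which is \emph{compact} in $f$ since $\slay\colon\lt\to\sob$ is bounded and $\sob\hookrightarrow\lt$ compactly on the compact hypersurface $\bo$. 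Consequently $\tfrac12\pm\dlay^{\star}$ have finite-dimensional kernels and closed ranges in $\lt$.

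\emph{Step 3 (kernels and conclusion).} If $(\tfrac12+\dlay^{\star})f=0$ then $\partial_{\eta}^{\mathrm{ext}}\SLay f=0$, so by the energy identity together with the decay at infinity $\SLay f$ is constant, hence $\equiv0$, on $\o^{\exter}$; thus $\slay f=0$ and $f=0$ by Theorem~\ref{thm:Inverstable S}. The same reasoning for $\tfrac12-\dlay^{\star}$ only leaves open that $\SLay f$ is a nonzero constant in $\o$, i.e.\ $f$ is a multiple of the equilibrium charge $f_{0}$ (normalized so that $\SLay f_{0}\equiv1$ in $\o$); since $\int_{\bo}f_{0}\measure{\X}\neq0$, this is ruled out on $\ltz$. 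So $\tfrac12+\dlay^{\star}$ is injective on $\lt$ and $\tfrac12-\dlay^{\star}$ is injective on $\ltz$, and by Step 2 their ranges are closed. For surjectivity I would identify the orthogonal complements of the ranges with $\mathcal N[\tfrac12+\dlay]$ and $\mathcal N[\tfrac12-\dlay]$ and evaluate these via the double layer potential $\DLay$: interior Dirichlet uniqueness forces $\DLay g\equiv0$ in $\o$; the conormal derivative of $\DLay g$ transitions continuously through $\bo$; exterior Neumann uniqueness with the decay then forces $\DLay g\equiv0$ in $\o^{\exter}$, so $g=-\lim_{\conlimo}\DLay g$ must be $0$ in the first case and a constant in the second (using Gauss's relation $\dlay\mathbf 1=\tfrac12$). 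Thus $\mathcal N[\tfrac12+\dlay]=\{0\}$, whence $\tfrac12+\dlay^{\star}\colon\lt\to\lt$ is bijective and so is its $L^{2}$-adjoint $\tfrac12+\dlay$; and $\mathcal N[\tfrac12-\dlay]=\mathbb{R}\mathbf 1$, so $\tfrac12-\dlay^{\star}$ maps $\lt$ onto the codimension-one subspace $\ltz$ (its image lies in $\ltz$ because $(\tfrac12-\dlay)\mathbf 1=0$), and since its kernel $\mathbb{R}f_{0}$ is a complement of $\ltz$, the restriction $\tfrac12-\dlay^{\star}\colon\ltz\to\ltz$ is bijective.

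\emph{Main obstacle.} The technical heart is Step 1 on a merely Lipschitz boundary: the boundary integrals in the Rellich identity make sense only through the nontangential maximal-function bounds on $\nabla\SLay f$; Green's identity itself must be obtained by exhausting $\o$ and $\o^{\exter}$ by smooth domains with uniformly controlled surface measures and a.e.-convergent normals and then passing to the limit by dominated convergence; and the solid remainder must be shown to be genuinely compact in $f$ rather than merely $O(\|f\|)$. Establishing this Lipschitz Rellich machinery, together with the boundary behaviour of $\DLay$ invoked in Step 3, is exactly the content of \cite{ver84}, which we therefore quote.
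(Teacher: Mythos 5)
The paper offers no internal proof of this lemma: it is imported verbatim from \cite[thm.~3.3]{ver84}. Your Steps 1--2 and the injectivity half of Step 3 are a faithful outline of Verchota's Rellich-identity strategy: the two-sided comparison of $\|(\tfrac12+\dlay^{\star})f\|$ and $\|(\tfrac12-\dlay^{\star})f\|$ through the common quantity $\|\nablat\slay f\|$, the identity $(\tfrac12+\dlay^{\star})f+(\tfrac12-\dlay^{\star})f=f$, and the kernel computations via uniqueness for the Dirichlet and Neumann problems for $\SLay f$ are exactly the ingredients of the quoted theorem (and of the estimate \eqref{eq:lower bound for tangent gradient of SLP}, which the paper extracts from the same source). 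So far this is the right reconstruction of the cited argument.

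The surjectivity part of Step 3, however, has a genuine gap. To compute $\mathcal{N}[\tfrac12+\dlay]$ and $\mathcal{N}[\tfrac12-\dlay]$ you pass the Neumann data of $\DLay g$ across $\bo$, invoking the continuity of the conormal derivative of the double layer potential. For $g$ merely in $\lt$ this is not available: $\nabla\DLay g$ does not in general have an $L^{2}$-bounded nontangential maximal function, so neither its nontangential boundary values nor the exterior Neumann uniqueness you then apply are meaningful; the conormal-continuity relation is a statement about $W^{1,2}(\bo)$ densities and cannot simply be quoted from \cite{ver84} at the $L^{2}$ level. Knowing that $\DLay g\equiv 0$ on one side of $\bo$ gives no control of the gradient from the other side. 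This is precisely why Verchota does not identify the cokernel directly on $\lt$: having the uniform lower bounds $\|f\|\le C\|(\tfrac12\pm\dlay^{\star})f\|$ (on $\lt$, resp.\ $\ltz$), he obtains surjectivity by a continuity/approximation argument, deforming $\Omega$ through Lipschitz domains with uniformly controlled Lipschitz character to a smooth one, where $\dlay$ is compact and classical Fredholm theory applies. Either that route, or first establishing the $W^{1,2}$ theory so that the conormal argument can be run on a dense class of densities, is needed to close your Step 3.
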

\begin{lem}
\label{lem:S1 constant}There exists a unique function $\consti\in\ltz$,
possibly zero, such that $\slay(1-\consti)$ is a non-zero constant
with
\begin{align}
\left(\frac{1}{2}-\dlay^{\star}\right)(1-\consti) & =0, & \text{and } &  & \nablat\slay(1-\consti) & =0.\label{eq:gradient of SLP}
\end{align}
Further, there exists a constant $C>0$ that depends on $\o$, such
that for every $f\in\lt$ it holds 
\begin{equation} 
\left\|\left(\frac{1}{2}-\dlay^{\star}\right)f\right\|\leq C\:\|\nablat\slay f\|.\label{eq:lower bound for tangent gradient of SLP}
\end{equation}
\end{lem}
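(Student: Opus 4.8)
The plan is to read a one-dimensional subspace off the invertibility statements already available and then turn everything into uniqueness for the $L^{2}$-Dirichlet problem on $\o$. Since $\slay\colon\lt\to\sob$ is invertible (Theorem~\ref{thm:Inverstable S}) and the constants form a one-dimensional subspace of $\sob$, the space $V:=\{g\in\lt:\slay g\ \text{is constant}\}$ is one-dimensional; let $\psi$ span it, normalized by $\slay\psi\equiv1$ on $\bo$. I claim $\int_{\bo}\psi\,d\sigma\neq0$. If not, $\psi\in\ltz$, and then $\SLay\psi$ is harmonic on $\o$ with $\|(\SLay\psi)^{M}\|\leq C\|\psi\|<\infty$ and nontangential boundary limit the constant $1$; by uniqueness of the $L^{2}$-Dirichlet problem on $\o$ this forces $\SLay\psi\equiv1$ on $\o$, so $\nabla\SLay\psi\equiv0$ there, whence \eqref{eq:limit of *double layer potential from inside} gives $\left(\tfrac12-\dlay^{\star}\right)\psi=0$; but $\tfrac12-\dlay^{\star}$ is injective on $\ltz$ (Lemma~\ref{lem:invertable double layer Potential}), so $\psi=0$, contradicting $\slay\psi\equiv1$. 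I then put $\consti:=1-\frac{\sigma(\bo)}{\int_{\bo}\psi\,d\sigma}\,\psi$, which has zero mean, hence lies in $\ltz$, and for which $1-\consti$ is a nonzero multiple of $\psi$, so $\slay(1-\consti)$ is a nonzero constant; note $\consti=0$ exactly when $\slay1$ is itself constant (as for the sphere).

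Next I would verify \eqref{eq:gradient of SLP}. The identity $\nablat\slay(1-\consti)=0$ is immediate because $\slay(1-\consti)$ is constant. For the other one, $\SLay(1-\consti)$ equals that same constant throughout $\o$ (again by Dirichlet uniqueness, exactly as above), hence $\nabla\SLay(1-\consti)\equiv0$ on $\o$, and \eqref{eq:limit of *double layer potential from inside} yields $\left(\tfrac12-\dlay^{\star}\right)(1-\consti)=0$. Uniqueness of $\consti$ is then formal: if $\widetilde{\consti}\in\ltz$ also makes $\slay(1-\widetilde{\consti})$ a nonzero constant, then $1-\widetilde{\consti}\in V\setminus\{0\}$, so $1-\widetilde{\consti}=\lambda\psi$ with $\lambda\neq0$, and equating means, $\sigma(\bo)=\lambda\int_{\bo}\psi\,d\sigma$, pins $\lambda$ down and gives $1-\widetilde{\consti}=1-\consti$.

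For the inequality \eqref{eq:lower bound for tangent gradient of SLP} I would work with the bounded operator $A:=\left(\tfrac12-\dlay^{\star}\right)\slay^{-1}\colon\sob\to\lt$. The computation just made (applied to $\psi=\slay^{-1}1$) shows $A$ annihilates the constants. Given $h\in\sob$, decompose $h=h_{0}+c$ with $c$ the mean of $h$ and $h_{0}$ of zero mean; then $Ah=Ah_{0}$ and $\nablat h=\nablat h_{0}$, while the Poincaré inequality on the connected compact Lipschitz surface $\bo$ yields $\|h_{0}\|_{\sob}\leq C_{P}\|\nablat h_{0}\|=C_{P}\|\nablat h\|$, so $\|Ah\|\leq\|A\|\,C_{P}\|\nablat h\|$. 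Since $\slay$ maps $\lt$ onto $\sob$, every $f\in\lt$ is $\slay^{-1}h$ for some $h$, and then $\left(\tfrac12-\dlay^{\star}\right)f=Ah$ while $\nablat\slay f=\nablat h$; this is \eqref{eq:lower bound for tangent gradient of SLP} with $C=\|A\|\,C_{P}$, a constant depending only on $\o$.

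The step that needs the most care is ``$\slay g$ constant $\Rightarrow\left(\tfrac12-\dlay^{\star}\right)g=0$'', i.e. identifying $\SLay g$ with its constant harmonic extension to $\o$: this relies on uniqueness in the $L^{2}$-Dirichlet problem for Lipschitz domains, which is classical but should be quoted precisely, together with the maximal-function bound $\|(\SLay g)^{M}\|\leq C\|g\|$ that puts $\SLay g$ minus a constant in the right class. The only other non-routine input is the Poincaré inequality on $\bo$, valid because $\bo$ is a connected compact Lipschitz hypersurface (via Rellich compactness of $\sob\hookrightarrow\lt$ and the fact that a function in $\sob$ with zero tangential gradient on the connected surface $\bo$ must be constant).
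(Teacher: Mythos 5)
Your proposal is correct, and it takes a genuinely different route from the paper: the paper does not prove Lemma \ref{lem:S1 constant} at all, but imports both assertions from \cite{ver84}, where the existence of the equilibrium density is extracted from the proof of Theorem 3.3(ii) there and the estimate \eqref{eq:lower bound for tangent gradient of SLP} from the Rellich-identity computations in the proof of Theorem 2.1 there. You instead derive everything as a corollary of results the paper has already quoted---the invertibility of $\slay\colon\lt\to\sob$ and of $\frac{1}{2}-\dlay^{\star}$ on $\ltz$---combined with uniqueness for the $L^{2}$-Dirichlet problem and a Poincar\'e inequality on $\bo$. The construction $\consti=1-\sigma(\bo)\psi/\!\int_{\bo}\psi\,d\sigma$ with $\psi=\slay^{-1}1$, the verification that $\int_{\bo}\psi\,d\sigma\neq0$, the passage from ``$\slay g$ constant'' to ``$(\frac{1}{2}-\dlay^{\star})g=0$'' via Dirichlet uniqueness and the jump relation \eqref{eq:limit of *double layer potential from inside}, and the factorization $(\frac{1}{2}-\dlay^{\star})f=A(\slay f)$ with $A$ annihilating constants are all sound; the two auxiliary inputs you flag (uniqueness of the $L^{2}$-Dirichlet problem for solutions with nontangential maximal function in $\lt$, which the paper itself invokes in the proof of Theorem \ref{thm:npi}, and the Poincar\'e inequality on the connected compact Lipschitz surface $\bo$, which follows from Rellich compactness in the charts of the appendix) are indeed available. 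What your approach buys is a short, soft, self-contained argument within this paper's framework; the one caveat worth recording is that it reverses the logical order of the original development---in \cite{ver84} the estimate \eqref{eq:lower bound for tangent gradient of SLP} is an ingredient in the proof of the very invertibility theorems you start from---so your argument is a legitimate a posteriori derivation given the paper's quoted toolkit, but not an independent replacement for the cited proofs.
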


The first statement of Lemma \ref{lem:S1 constant} was shown in the proof of \cite[thm. 3.3. (ii)]{ver84}. The second was established in the proof of \cite[thm. 2.1]{ver84}. %
Note  that the equalities $\nablat\slay1=0=\left(\frac{1}{2}-\dlay^{\star}\right)1$ hold if and only if $\consti=0$.  This fact will be used at several places
below.

\begin{rem}\label{rem:null-space-of-K-S}
  Since $\left(\frac{1}{2} - \dlay^{*}\right)$ is invertible on $\ltz$ and maps $L^2(\bo)$ into
  $\ltz$ \cite[thm. 3.3]{ver84}, its null space on $\lt$ is 1-dimensional and consists exactly of multiples of $1-\consti$,
  by \eqref{eq:gradient of SLP}. In another connection, the null space of $\nablat S$ comprises those $f$ for which $S f$ is constant,
  and since $S$ is injective by Theorem  \ref{thm:Inverstable S}
  it follows from \eqref{eq:gradient of SLP} that the null space  of $\nablat S$ coincides with the null space of $\left(\frac{1}{2} - \dlay^{*}\right)$. In fact, the first identity  in \eqref{eq:gradient of SLP} means 
  that $\SLay(1-\nu_0)$ is the solution to a harmonic Neumann problem in $\Omega$ with  nontangential maximal function of its gradient in $L^2(\bo)$ and zero boundary data,  while 
  the second identity says
  that $\SLay(1-\nu_0)$ is the solution to a harmonic Dirichlet problem
  in $\Omega$ with nontangential maximal function in $L^2(\bo)$ and constant boundary data. In both cases, this means that $\SLay (1-\nu_0)$ is constant on $\Omega$.
  \end{rem}

\begin{rem}
Equation \eqref{eq:gradient of SLP} entails that the Newtonian equilibrium measure $\mu$ of
$\overline{\Omega}$---a well studied object in potential theory \cite{wermer74}---is   given by $d\mu = (1/\sigma(\bo)) (1-\consti) \ d\sigma$.  Indeed,
$\mu$ is known to be a probability measure supported on $\bo$, and it is absolutely continuous with $L^2$-density with respect to $\sigma$, by fundamental results on
harmonic measure for Lipschitz domains \cite[cor. to thm. 3]{DahlbergHM1977}
and standard relations between harmonic and equilibrium  measures
\cite[ch. IV, sec. 5, \S 20]{Landkof}. Thus, $d\mu=kd\sigma$ for some
$k\in L^2(\bo)$, and since $\Omega$ is non thin at each point of $\bo$
(for example, by \cite[thm. 5.4 and thm. 5.10]{Landkof},
a characteristic property of
$\mu$ is that its potential is constant on $\Omega\cup\bo$
since it is constant approximately everywhere \cite[ch. II, sec. 1, \S 3]{Landkof}.
This amounts to require that $\slay k$ is constant on $\bo$,
 that is $\nablat\slay k=0$, and by Remark \ref{rem:null-space-of-K-S}
this is  equivalent to
require that $(\frac{1}{2}-K^*)k=0$. Altogether, we conclude
that $1-\nu_0$, which has mass $\sigma(\bo)$,
is equal to $\sigma(\bo)k$, as announced.
We also obtain the extra-information that $\nu_0\leq 1$ a.e. on $\bo$, because $\mu$ is positive. Gruber's conjecture asserts that $\consti$ is zero if and only if $\bo$ is a sphere; and this is known to hold when $\o$ is convex \cite[thm. 4.12]{mitrea09,}. Even though a discussion of Gruber's conjecture is beyond the scope of the present paper, our analysis will stress a link between the equilibrium measure and  the properties of projectors onto Hardy-spaces.
\end{rem}

\section{Orthogonal decomposition of fields}\label{sec:orthoproj}

In this section, we present two orthogonal decompositions of
the space $\ltd d$ based on the null space of the scalar potential.
Hereafter, we will denote the space of harmonic functions in a region $N\subset\mathbb{R}^{d}$ by $\mathcal{H}(N)$. 

We begin with a definition of inner and outer scalar potentials.
\begin{defn}\label{def:potops}
Let $g\vert_{\o}$ denote the restriction of the function $g$ to
the region $\o$, and recall from (\ref{eq:scalar potential}) the scalar
potential $\pot$.
We define the inner scalar potential as
\begin{align}
\poti \colon \ltd d	&	\to \mathcal{H}(\o) \\
				f 	& 	\mapsto(\mathcal{P}f)\vert_{\o}
\end{align}
and the outer scalar potential as
\begin{align}
\poto \colon \ltd d 	&	\to \mathcal{H}(\o^{\exter})\\
				f 	&	\mapsto(\mathcal{P}f)\vert_{\o^{\exter}}.
\end{align}
Both operators are linear.
\end{defn}
The operators $\poti$ and $\poto$ have non-trivial null spaces,
leading to orthogonal direct sum decompositions of $\ltd d$ as $\mathcal{N}\left[\poti\right]\oplus\mathcal{N}\left[\poti\right]^{\perp}$
and $\mathcal{N}\left[\poto\right]\oplus\mathcal{N}\left[\poto\right]^{\perp}$.

When $\bo$ is a sphere, the spaces $\mathcal{N}\left[\poti\right]$ and $\mathcal{N}\left[\poto\right]$ relate to  nontangential limits of harmonic gradients---the Hardy spaces. Below, we recall the definition of Hardy spaces.

\paragraph{Hardy spaces.}

The inner Hardy space, $H_{+}^{2}(\o)$, is the space of  gradients of harmonic functions in $\o$ whose nontangential maximal function is in $\lt$:
\begin{equation}
H^{2}_{+}(\o)=\left\{ \nabla g\,:\,g\in\mathcal{H}(\o),\,\|(\nabla g)^{M}_{\inter}\|<\infty\right\} \label{eq:HardyPlus}.
\end{equation}
The outer Hardy space $H_{-}^{2}(\o)$ is defined similarly on the outside
of $\bo$:
\begin{equation}
H^{2}_{-}(\o)=\left\{ \nabla g\,:\, g\in\mathcal{H}(\o^{\exter}),\,\| (\nabla g)^{M}_{\exter}\|<\infty,\,\lim_{\X\to\infty}\left|g(\X)\right|=0\right\} \label{eq:HardyMinus}.
\end{equation}
For each $\nabla g\in H_{+}^{2}\left(\o\right)$ there is a square integrable
boundary field $f\in\ltd d$, such that $\nabla g$ converges nontangentially
from inside to $f$, a.e. on $\bo$ (see for example \cite[thm. 7.9]{gilmur91}). The space of boundary
fields obtained in this way
is denoted by $\hardyp$.
The same limiting procedure for 
$H^{2}_{-}(\o)$-functions  leads to the space $\hardym$. We still
call the space $\hardyp$ (or $\hardym$) the
inner (or outer) Hardy space, with no fear of confusion because the nontangential limit and the $L^2$-boundedness of the nontangential maximal function characterize harmonic functions.

If $\bo$ is a sphere then $\hardyp$ and $\hardym$ are orthogonal;
and denoting by $\divf\subset\tang$ the space of divergence-free tangent
fields on $\bo$, we have that  $\mathcal{N}\left[\poti\right]=\hardyp\oplus\divf$
and $\mathcal{N}\left[\poti\right]^{\perp}=\hardym$. This is
the Hardy-Hodge decomposition on a sphere---an orthogonal direct sum decomposition
of $\ltd d$ as $\ltd d=\hardyp\oplus\hardym\oplus\divf$. Moreover,
if we use the null space of $\poti$ instead of the null space of $\poto$, 
the role of $\hardyp$ and $\hardym$ get swapped.

On Lipschitz domains, the Hardy-Hodge decomposition still exists, but the inner and outer Hardy-spaces are no longer orthogonal to each other
when $\bo$ is not a sphere (see Corollary \ref{cor:IODortho}).

\subsection{Inner decomposition}\label{sec:inner decomposition}

Below we derive an orthogonal direct sum for $\ltd d$,
based on the null space of the inner scalar potential. For $f$  a field, recall
its normal component $f_{\eta}$ and its tangent component $f_{\tcomp}$ such that $f=\eta f_{\eta} + f_{\tcomp}$.
\begin{defn}\label{def:ibo}
Define the operator, $\tri\colon\ltd d\to\lt $, as, 
\begin{equation}
\tri f \doteq\left(\frac{1}{2}+\dlay\right)f_{\eta}+\left(\nablat\slay\right)^{\star}f_{\tcomp};\label{eq:definition of B}
\end{equation}
it is linear and bounded. We call $\tri$ the inner boundary
operator.
\end{defn}
\begin{thm}\label{thm:npi}
\label{thm:The-inner-potenential}\label{thm:null space for PI}The inner scalar potential of $f\in\ltd d$ can be expressed as
\begin{equation}
\poti f\left(\X\right)=\DLay\left(\frac{1}{2}+\dlay\right)^{-1}\tri f\left(\X\right)\qquad\left(\X\in\o\right).\label{eq:inner potential with B}
\end{equation}
Moreover, the null space of $\poti$ is given by
\begin{equation}
\mathcal{N}\left[\poti\right]=\mathcal{N}\left[\tri\right]=\left\{ f\in\ltd d\,:\left(\frac{1}{2}+\dlay\right)f_{\eta}=-\left(\nablat\slay\right)^{\star}f_{\tcomp}\right\}.\label{eq:null space of inner potential}
\end{equation}
\end{thm}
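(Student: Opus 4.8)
The plan is to establish the integral-formula \eqref{eq:inner potential with B} first, and then read off the null-space description as an easy consequence. To prove \eqref{eq:inner potential with B}, I would start from the definition \eqref{eq:scalar potential} of $\pot f$ and split $f=\eta f_\eta+f_\tcomp$ by linearity, so that $\poti f = \poti(\eta f_\eta)+\poti f_\tcomp$ on $\o$. The first summand is, by the very definition of the double layer potential, $\poti(\eta f_\eta) = \DLay f_\eta$ restricted to $\o$. For the second summand, the kernel of $\pot$ applied to a tangent field $\tau=f_\tcomp$ is $\langle\Q-\X,\tau(\Q)\rangle_{\mathbb{R}^d}/|\Q-\X|^d$, which is $-1$ times the kernel of $\nabla\SLay$ from \eqref{eq:gradient of SLP for scalars}; hence, for $\X\in\o$ we have $\poti f_\tcomp(\X) = -\langle\nabla\SLay f_\tcomp(\X),\,\cdot\,\rangle$—more precisely one must recognize $\pot f_\tcomp(\X)$ as the ``scalar contracted against the tangent field'' version of the single-layer gradient. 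Taking nontangential limits from inside and using \eqref{eq:limit of SLP}, \eqref{eq:limit of grad-S-star}, \eqref{eq:limit for double layer potential} and \eqref{eq:limit of *double layer potential from inside}, the interior trace of $\poti f$ equals $\bigl(\tfrac12+\dlay\bigr)f_\eta + (\nablat\slay)^\star f_\tcomp = \tri f$, by Definition \ref{def:ibo}.

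Next I would invoke uniqueness for the interior Dirichlet problem in the $L^2$ sense: a harmonic function on $\o$ whose nontangential maximal function lies in $\lt$ is determined by its nontangential boundary trace (this is exactly the well-posedness underpinning Lemma \ref{lem:invertable double layer Potential}). Both sides of \eqref{eq:inner potential with B} are harmonic in $\o$ with $\lt$-bounded maximal function: the left side because $\pot f$ is harmonic off $\bo$ and the maximal-function bounds for $\DLay$ and $\nabla\SLay$ quoted in Section \ref{sec:prelim} give control of $(\poti f)^M_\inter$; the right side because $\bigl(\tfrac12+\dlay\bigr)^{-1}\tri f\in\lt$ by Lemma \ref{lem:invertable double layer Potential}, and $\DLay$ maps $\lt$ to a function with $\lt$-bounded interior maximal function. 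Their boundary traces agree: the trace of the right-hand side is $\bigl(\tfrac12+\dlay\bigr)\bigl(\tfrac12+\dlay\bigr)^{-1}\tri f = \tri f$ by \eqref{eq:limit for double layer potential}, which matches the trace $\tri f$ of the left-hand side computed above. Hence the two harmonic functions coincide on $\o$, proving \eqref{eq:inner potential with B}.

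Finally, the null-space identity. If $\tri f=0$ then \eqref{eq:inner potential with B} gives $\poti f\equiv 0$, so $\mathcal{N}[\tri]\subseteq\mathcal{N}[\poti]$. Conversely, if $\poti f\equiv 0$ on $\o$ then its nontangential interior trace vanishes; but that trace is $\tri f$, so $\tri f=0$ and $\mathcal{N}[\poti]\subseteq\mathcal{N}[\tri]$. (Equivalently, apply the invertible operator $\bigl(\tfrac12+\dlay\bigr)$ to \eqref{eq:inner potential with B} and take traces.) The explicit set description in \eqref{eq:null space of inner potential} is then just the condition $\bigl(\tfrac12+\dlay\bigr)f_\eta+(\nablat\slay)^\star f_\tcomp=0$ rearranged, using Definition \ref{def:ibo}.

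The only genuinely delicate point is the bookkeeping in the first paragraph: one must carefully track the minus sign distinguishing the kernel of $\pot$ from that of $\nabla\SLay$, and correctly assemble the interior trace of $\poti f_\tcomp$ as the tangential-gradient adjoint $(\nablat\slay)^\star f_\tcomp$ rather than as a normal component—the normal jump of $\nabla\SLay$ contributes only through the $f_\eta$ term via \eqref{eq:limit of *double layer potential from inside}. Once the trace is correctly identified as exactly $\tri f$, the rest is a routine appeal to $L^2$ Dirichlet uniqueness and the invertibility results already quoted.
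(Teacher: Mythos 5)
Your proposal is correct and follows essentially the same route as the paper: split $f=\eta f_\eta+f_{\tcomp}$, identify the interior nontangential trace of $\poti f$ as $\tri f$ via \eqref{eq:limit for double layer potential} and \eqref{eq:limit of grad-S-star}, and conclude by uniqueness of the $L^2$ Dirichlet problem and the invertibility of $\tfrac12+\dlay$. The only quibble is that the normal part enters through \eqref{eq:limit for double layer potential} (since $\poti(\eta f_\eta)=\DLay f_\eta$), not through \eqref{eq:limit of *double layer potential from inside}, but this does not affect the argument.
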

\begin{proof}
Let $f$ be in $\ltd d$. For
$\X\in\o$,  we get that
\begin{align}
\poti f\left(\X\right) & =\poti(\eta f_{\eta})\left(\X\right)+\poti f_{\tcomp}\left(\X\right)=\DLay f_{\eta}\left(\X\right)+\poti f_{\tcomp}\left(\X\right).
\end{align}
Taking the nontangential limit on $\bo$ from $\Omega$ on both sides of
the above equation and using (\ref{eq:limit of grad-S-star}) and
(\ref{eq:limit for double layer potential}) yields
\begin{equation}
\lim_{\X\to\Q}\poti f\left(\X\right)=\left(\frac{1}{2}+\dlay\right)f_{\eta}\left(\Q\right)+\left(\nablat\slay\right)^{\star}f_{\tcomp}\left(\Q\right)=\tri f\left(\Q\right).\label{eq:limit of the inner potential}
\end{equation}
By Lemma \ref{lem:invertable double layer Potential} the operator
$\left(\frac{1}{2}+K\right)^{-1}$ is well defined. Hence,  $\poti f-\DLay\left(\frac{1}{2}+\dlay\right)^{-1}\tri f$
is harmonic in $\Omega$, and converges nontangentially to zero a.e. on
$\bo$, and has $L^2$-bounded nontangential maximal function. By uniqueness of the Dirichlet problem  (see for example \cite[cor. 3.2]{ver84}) that function
is zero; this proves \eqref{eq:inner potential with B}. It follows that $\mathcal{N}\left[\poti\right]=\mathcal{N}\left[\tri\right]$
and the second equality in \eqref{eq:null space of inner potential}
is immediate from the definition of $\tri$. 
\end{proof}
\begin{cor}
  \label{cor:null space of PI (normal component)}For $f\in \mathcal{N}\left[\poti\right]$, the normal component of $f$ is uniquely determined by the tangent
  part of $f$ through
the relation, 
\begin{equation}
f_{\eta}=\left(\frac{1}{2}+\dlay\right)^{-1}\left(\nablat\slay\right)^{\star}f_{\tcomp}.
\end{equation}
\end{cor}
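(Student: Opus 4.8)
The plan is to read the corollary off directly from Theorem~\ref{thm:null space for PI}. That theorem already identifies $\mathcal{N}\left[\poti\right]$ with $\mathcal{N}\left[\tri\right]$, and, unwinding the definition \eqref{eq:definition of B} of $\tri$, it records that $f\in\mathcal{N}\left[\poti\right]$ holds exactly when the components of $f=\eta f_{\eta}+f_{\tcomp}$ satisfy $\left(\tfrac{1}{2}+\dlay\right)f_{\eta}=-\left(\nablat\slay\right)^{\star}f_{\tcomp}$ as an identity in $\lt$; note that the right-hand side does belong to $\lt$ by Lemma~\ref{gradTP}, so this equation makes sense.

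From here I would simply invoke Lemma~\ref{lem:invertable double layer Potential}, which furnishes a bounded inverse $\left(\tfrac{1}{2}+\dlay\right)^{-1}\colon\lt\to\lt$, and apply it to both sides of the relation above; this produces the claimed formula expressing $f_{\eta}$ through $f_{\tcomp}$, the only thing to watch being the sign carried over from $\tri f=0$. Uniqueness then comes for free: injectivity of $\tfrac{1}{2}+\dlay$ means that once $f_{\tcomp}$ is prescribed the relation pins $f_{\eta}$ down unambiguously, so the map $f_{\tcomp}\mapsto f_{\eta}$ on $\mathcal{N}\left[\poti\right]$ is single-valued.

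In short, there is essentially no obstacle here — the corollary is a one-line consequence of the null-space description in Theorem~\ref{thm:null space for PI} together with the invertibility of $\tfrac{1}{2}+\dlay$ from Lemma~\ref{lem:invertable double layer Potential}; the only point that requires the slightest care is the bookkeeping of signs.
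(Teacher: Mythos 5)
Your proof is correct and is exactly the argument the paper intends: the corollary is an immediate consequence of the null-space identity \eqref{eq:null space of inner potential} in Theorem~\ref{thm:null space for PI} combined with the bounded invertibility of $\frac{1}{2}+\dlay$ from Lemma~\ref{lem:invertable double layer Potential}. Your caution about signs is well placed --- applying the inverse to $\left(\frac{1}{2}+\dlay\right)f_{\eta}=-\left(\nablat\slay\right)^{\star}f_{\tcomp}$ actually gives $f_{\eta}=-\left(\frac{1}{2}+\dlay\right)^{-1}\left(\nablat\slay\right)^{\star}f_{\tcomp}$, so the formula as displayed in the corollary is missing a minus sign.
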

Theorem \ref{thm:npi} shows that the operator $\tri$ determines the null space for $\poti$. The following lemma shows that the adjoint $\tric$ of $\tri$
determines the orthogonal complement of the null space of $\poti$. 
\begin{lem}
\label{lem:adjoint of BI}The
operator $\tric\colon\lt\to\ltd d$ can be written as 
\begin{equation}
\tric g=\eta\left(\frac{1}{2}+\dlay^{\star}\right)g+\nablat\slay g.
\end{equation}
The range of $\tric$ satisfies $\mathcal{R}\left[\tric\right]=\hardym=\mathcal{N}\left[\poti\right]^{\perp}$.
\end{lem}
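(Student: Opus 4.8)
The plan is to handle two almost independent tasks: the closed form of $\tric$, and the identification of its range. For the formula I would dualize \eqref{eq:definition of B} term by term. Given $g\in\lt$ and $f=\eta f_{\eta}+f_{\tcomp}\in\ltd d$, using that $\dlay^{\star}$ and $(\nablat\slay)^{\star}$ are the $L^{2}$-adjoints of $\dlay$ and $\nablat\slay$ respectively (Lemma \ref{gradTP} and the comment after it), I obtain $\langle\tri f,g\rangle=\langle f_{\eta},(\tfrac12+\dlay^{\star})g\rangle+\langle f_{\tcomp},\nablat\slay g\rangle$. Since $f_{\eta}(\Q)=\langle f(\Q),\eta(\Q)\rangle_{\mathbb{R}^{d}}$ pointwise, the first pairing equals $\langle f,\eta(\tfrac12+\dlay^{\star})g\rangle$, and since $\nablat\slay g$ is a tangent field the second equals $\langle f,\nablat\slay g\rangle$; this gives $\tric g=\eta(\tfrac12+\dlay^{\star})g+\nablat\slay g$.

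Next I would recognize $\tric g$ as the nontangential boundary value of $\nabla\SLay g$ taken from the outside: by \eqref{eq:limit of SLP} the tangential part of that boundary value is $\nablat\slay g$ (the same from both sides), and by \eqref{eq:limit for *double layer potential from outside} its normal part is $(\tfrac12+\dlay^{\star})g$. Since $\SLay g$ is harmonic on $\o^{\exter}$, the nontangential maximal function of $\nabla\SLay g$ is in $\lt$, and $\SLay g(\X)\to0$ as $\X\to\infty$ (immediate from \eqref{eq:single layer potential} because $d\geq3$), the field $\nabla\SLay g\vert_{\o^{\exter}}$ belongs to $H^{2}_{-}(\o)$; hence $\tric g\in\hardym$, so $\mathcal{R}[\tric]\subseteq\hardym$.

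For the reverse inclusion I would start from a potential $\nabla h\in H^{2}_{-}(\o)$ whose boundary field is $f\in\hardym$. One first checks that the nontangential trace of $h$ on $\bo$ lies in $\sob$: its tangential gradient is the tangential part of the trace of $\nabla h$, hence in $\lt$, while $h$ itself has $\lt$-bounded nontangential maximal function, obtained by integrating $\nabla h$ along rays toward infinity where $h$ vanishes. By Theorem \ref{thm:Inverstable S} there is a unique $g=S^{-1}(h\vert_{\bo})\in\lt$ with $\slay g=h\vert_{\bo}$; then $\SLay g$ and $h$ are both harmonic on $\o^{\exter}$, vanish at infinity, have $\lt$-bounded nontangential maximal functions, and carry the same Dirichlet data, so by uniqueness for the exterior Dirichlet problem they agree on $\o^{\exter}$. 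Thus $\nabla\SLay g=\nabla h$ there, and taking nontangential limits from the outside yields $\tric g=f$, so $\hardym\subseteq\mathcal{R}[\tric]$ and therefore $\mathcal{R}[\tric]=\hardym$. To match this with $\mathcal{N}[\poti]^{\perp}$, I would use that $\mathcal{N}[\poti]=\mathcal{N}[\tri]$ by Theorem \ref{thm:npi}, so $\overline{\mathcal{R}[\tric]}=\mathcal{N}[\tri]^{\perp}=\mathcal{N}[\poti]^{\perp}$, and that $\mathcal{R}[\tric]$ is already closed: since $(\tfrac12+\dlay)$ is invertible on $\lt$ (Lemma \ref{lem:invertable double layer Potential}), the operator $\tri$ is onto $\lt$ (take $f=\eta(\tfrac12+\dlay)^{-1}g$), so $\mathcal{R}[\tri]=\lt$ is closed and, by the closed range theorem, $\mathcal{R}[\tric]$ is closed as well. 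Hence $\mathcal{R}[\tric]=\hardym=\mathcal{N}[\poti]^{\perp}$.

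I expect the main obstacle to be the reverse inclusion $\hardym\subseteq\mathcal{R}[\tric]$, which relies on two standard but non-elementary facts from Lipschitz layer-potential theory: that a potential in $H^{2}_{-}(\o)$ has boundary trace in $\sob$, and that the exterior Dirichlet problem is uniquely solvable in the class of harmonic functions with $\lt$-bounded nontangential maximal function vanishing at infinity (the interior version of the latter being the one quoted in the excerpt). The delicate part there is bookkeeping the behavior at infinity when passing from the exterior statements to the interior ones; everything else is either the bare dualization of the first paragraph or the soft functional-analytic wrap-up, whose essential content is that invertibility of $(\tfrac12+\dlay)$ forces $\mathcal{R}[\tric]$ to be closed, so that it equals $\mathcal{N}[\poti]^{\perp}$ rather than merely being dense in it.
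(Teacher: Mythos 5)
Your proof is correct, and the first two steps (the dualization giving the formula for $\tric$, and the identification of $\tric g$ with the exterior nontangential trace of $\nabla\SLay g$, hence $\mathcal{R}[\tric]\subseteq\hardym$) coincide with the paper's. Where you genuinely diverge is the surjectivity step: the paper solves a \emph{Neumann} problem, observing that for $f\in\hardym$ the harmonic function $\varphi$ in $\o^{\exter}$ with normal derivative $f_\eta$ must equal $\SLay g$ for $g=(\tfrac12+\dlay^{\star})^{-1}f_\eta$, using only the invertibility of $\tfrac12+\dlay^{\star}$ on $\lt$ (already available from Lemma \ref{lem:invertable double layer Potential} by duality); you instead solve a \emph{Dirichlet} problem, setting $g=\slay^{-1}(h\vert_{\bo})$ via Theorem \ref{thm:Inverstable S}. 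Your route is equally legitimate but costs more: it needs the fact that the trace of $h$ lies in $\sob$ with tangential gradient equal to the tangential trace of $\nabla h$, which the paper establishes (Lemma \ref{convgrad}) only for single layer potentials — the proof there does extend to general harmonic functions with $L^2$-bounded maximal functions, but you are invoking a regularity statement the paper deliberately avoids needing, whereas the Neumann route works directly from the normal component. Your wrap-up also differs pleasantly: rather than quoting closedness of $\hardym$ as the paper does, you derive closedness of $\mathcal{R}[\tric]$ from surjectivity of $\tri$ (take $f=\eta(\tfrac12+\dlay)^{-1}g$) and the closed range theorem, which is self-contained and arguably tidier.
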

\begin{proof}
Let $f=\eta f_{\eta} + f_{\tcomp}$ be in $\ltd d$. For every
$g\in\lt$, we get that
\begin{align}
\left\langle \tri f,g\right\rangle  & =\left\langle \left(\frac{1}{2}+\dlay\right)f_{\eta}+\left(\nablat\slay\right)^{\star}f_{T},g\right\rangle \\
 & =\left\langle f_{\eta},\left(\frac{1}{2}+\dlay^{\star}\right)g\right\rangle +\left\langle f_{T},\nablat\slay g\vphantom{\left(\frac{1}{2}\right)}\right\rangle =\left\langle f,\tric g\right\rangle.
\end{align}
Hence, using (\ref{eq:limit for *double layer potential from outside})
we have that
\begin{align}
\tric g\left(\P\right) & =\eta\left(\frac{1}{2}+\dlay^{\star}\right)g\left(\P\right)+\nablat\slay g\left(\P\right)=\lim_{\X\to\P}\nabla\SLay g\left(\X\right)\label{eq:Bi-star}
\end{align}
when $\X$ approaches $\P$ within $\Gamma_{\exter}$. Since $\SLay g$ is harmonic in $\o^{\exter}$ and has $L^2$-bounded maximal function, the right hand side of \eqref{eq:Bi-star} defines a function in $\hardym$.

To see that $\tric$ is onto, consider  $f\in \hardym$. By  the well-posedness
of the Neumann problem, there exists a unique function $\varphi$ that is harmonic in $\o^{\exter}$, vanishes at infinity, and  whose normal derivative equals $f_{\eta}$ a.e. on $\bo$, while  $\| (\nabla\varphi)^{M} \|$ is finite.
On the other hand, since $\left(\frac{1}{2} + \dlay^{\star}\right)$ is invertible because its adjoint is by Lemma \ref{lem:invertable double layer Potential},
we get from\eqref{eq:limit for *double layer potential from outside} that $\varphi = \SLay \left(\frac{1}{2} + \dlay^{\star}\right)^{-1} f_{\eta}$, and so the function $g=\left(\frac{1}{2} + \dlay^{\star}\right)^{-1} f_{\eta}\in\lt$ is such that 
\[
\displaystyle f(\P)=\lim_{\X \to \P} \nabla \varphi \left(\P\right) = \lim_{\X \to \P} \nabla \SLay g \left(\X\right)
\]
where the nontangential limits are taken from $\Omega^o$. Consequently, $\tric$ is onto and $\mathcal{R}\left[\tric\right]=\hardym$.

For the last statement, observe by general properties of operators on Hilbert space that
$\mathcal{N}\left[\tri\right]=\mathcal{R}\left[\tric\right]^{\perp}=\hardym^{\perp}$.  The conclusion now follows since $\hardym$ is closed.
\end{proof}
\begin{cor}
The space $\ltd d$ splits into an orthogonal sum as 
\begin{equation}
\ltd d=
\hardym\oplus\mathcal{N}\left[\poti\right]. \label{eq:HmNPi}
\end{equation}
\end{cor}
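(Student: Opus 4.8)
The plan is to read the corollary off directly from Theorem \ref{thm:null space for PI} and Lemma \ref{lem:adjoint of BI}, since the substantive work has already been done there. First I would observe that, by Theorem \ref{thm:null space for PI}, $\mathcal{N}\left[\poti\right]=\mathcal{N}\left[\tri\right]$, so this is a closed subspace of $\ltd d$, being the null space of the bounded operator $\tri$. Hence $\ltd d$ splits as the orthogonal direct sum of this subspace and its orthogonal complement, $\ltd d=\mathcal{N}\left[\tri\right]\oplus\mathcal{N}\left[\tri\right]^{\perp}$, by the orthogonal decomposition theorem in Hilbert space.

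Next I would identify $\mathcal{N}\left[\tri\right]^{\perp}$. By the general Hilbert-space identity relating the null space of a bounded operator to its adjoint, $\mathcal{N}\left[\tri\right]^{\perp}=\overline{\mathcal{R}\left[\tric\right]}$. Lemma \ref{lem:adjoint of BI} gives $\mathcal{R}\left[\tric\right]=\hardym$ and that $\hardym$ is closed, so $\mathcal{N}\left[\tri\right]^{\perp}=\hardym$. Substituting into the orthogonal splitting and using $\mathcal{N}\left[\tri\right]=\mathcal{N}\left[\poti\right]$ once more yields $\ltd d=\hardym\oplus\mathcal{N}\left[\poti\right]$, which is precisely the claim.

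There is essentially no obstacle here: once Theorem \ref{thm:null space for PI} has identified $\mathcal{N}\left[\poti\right]$ with $\mathcal{N}\left[\tri\right]$, and Lemma \ref{lem:adjoint of BI} has computed $\tric$ and identified its range with $\hardym=\mathcal{N}\left[\poti\right]^{\perp}$, the corollary is a formal consequence of orthogonal complementation. The only point deserving a word is the closedness of $\hardym$, which guarantees $\overline{\mathcal{R}\left[\tric\right]}=\mathcal{R}\left[\tric\right]$; but this has already been invoked in the proof of Lemma \ref{lem:adjoint of BI}, so I would simply cite it.
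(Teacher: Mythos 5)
Your proposal is correct and follows exactly the route the paper intends: the corollary is an immediate consequence of Theorem \ref{thm:null space for PI} (identifying $\mathcal{N}[\poti]$ with the closed subspace $\mathcal{N}[\tri]$) and Lemma \ref{lem:adjoint of BI} (identifying $\mathcal{R}[\tric]=\hardym=\mathcal{N}[\poti]^{\perp}$, with $\hardym$ closed), combined with the standard orthogonal decomposition of a Hilbert space. The paper leaves the corollary without further proof for precisely this reason.
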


\subsection{Orthogonal projections for the inner decomposition}\label{sec:innerproj}

For the rest of the paper, we make the following definitions.
\begin{defn}
\label{def:tangent spaces}We define a space of tangent fields with
zero mean as
\begin{align*}
\tangz & =\left\{ f\in\tang\,:\,\left(\nablat\slay\right)^{\star}f\in\ltz\right\} ,
\end{align*}
and the space of tangent gradient fields as 
\begin{align*}
\grad & =\left\{ \nablat\varphi\,:\, \varphi\in\sob\right\} ,
\end{align*}
as well as the space of tangent divergence-free vector fields
(see Remark \ref{rem:weak divergence}):
\begin{align*}
\divf & =\left\{ f\in\tang\,:\,\left(\nablat\slay\right)^{\star}f=0\right\}.
\end{align*}

Since $\slay$ is onto $\sob$, we can equivalently write the space of gradient fields as $\grad  =\left\{ \nablat\slay f\,:\,f\in\lt\right\}$.
Then, it is immediate that $\divf$ is the orthogonal complement of $\grad$;
this fact is also apparent from the definition of the divergence. The spaces $\tangz$ and $\divf$ are closed, since $\left(\nablat\slay\right)^{\star}$
is continuous, and therefore there exist orthogonal projections $\projz \colon \tang\to\tangz$
and $\projd:\tang\to\divf$.
\end{defn}
The goal of this section is to prove the following theorem.
\begin{thm}
\label{thm:L2 decomposition in}The operator 
\begin{equation}
\projm=\tric\left(\tri\tric\right)^{-1}\tri
\end{equation}
defines the  orthogonal projection from $\ltd d$ onto $\hardym$.

Let $\projd$ be as in definition \ref{def:tangent spaces}. Then,
\begin{equation}
\projv\doteq\mathds{1}-\projd-\projm
\end{equation}
is the orthogonal projection from $\ltd d$ onto $\Iod=\mathcal{N}[\poti]\cap\mathcal{N}[\poto]^{\perp}$; and the space $\ltd d$ splits into an orthogonal direct sum as 
\begin{equation}
\ltd d=\Iod \oplus \hardym \oplus \divf. \label{eq:HmID}
\end{equation}
\end{thm}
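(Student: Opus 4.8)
The plan is to build the decomposition in two stages, first isolating the Hardy summand $\hardym$ via the operator $\projm$, and then splitting the orthogonal complement $\mathcal{N}[\poti]$ into $\Iod$ and $\divf$. For the first stage, recall from Lemma \ref{lem:adjoint of BI} that $\mathcal{R}[\tric]=\hardym=\mathcal{N}[\poti]^{\perp}$ and $\mathcal{N}[\tri]=\hardym^{\perp}$. To make sense of $\projm=\tric(\tri\tric)^{-1}\tri$ I would first check that $\tri\tric\colon\lt\to\lt$ is invertible; this should follow because $\tric$ is injective (its range is closed and equals $\hardym$, and by Lemma \ref{lem:adjoint of BI} again $\mathcal{N}[\tric]=\mathcal{R}[\tri]^\perp$, so injectivity of $\tric$ is equivalent to $\tri$ having dense range — which I would get from $\mathcal{N}[\tric]=\mathcal{R}[\tri]^\perp=\{0\}$, i.e. from $\mathcal{N}[\tric]=\mathcal{N}[\poti]^{\perp\perp}\cap\cdots$; more cleanly, $\tric$ injective with closed range means $\tric^*\tric=\tri\tric$ is invertible by the standard closed-range argument). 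Granting invertibility of $\tri\tric$, a routine computation shows $\projm^2=\projm$ and $\projm^*=\projm$, so $\projm$ is an orthogonal projection; its range is contained in $\mathcal{R}[\tric]=\hardym$, and it acts as the identity on $\hardym$ because for $g\in\lt$ one has $\projm\tric g=\tric(\tri\tric)^{-1}(\tri\tric)g=\tric g$. Hence $\projm$ is exactly the orthogonal projection onto $\hardym$, and $\mathds{1}-\projm$ is the orthogonal projection onto $\hardym^\perp=\mathcal{N}[\poti]$ by the Corollary following Lemma \ref{lem:adjoint of BI}.

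For the second stage I would show that $\divf\subseteq\mathcal{N}[\poti]$ and that $\divf\perp\hardym$, so that $\projd$ (orthogonal projection onto $\divf$ within $\tang$, extended by $0$ on $\tang^\perp$, which is consistent since $\divf\subseteq\tang$) satisfies $\projd=\projd(\mathds{1}-\projm)$. The inclusion $\divf\subseteq\mathcal{N}[\poti]$ is immediate from \eqref{eq:null space of inner potential}: if $f=f_{\tcomp}\in\tang$ with $(\nablat\slay)^\star f_{\tcomp}=0$ and $f_\eta=0$, then $(\tfrac12+\dlay)f_\eta=0=-(\nablat\slay)^\star f_{\tcomp}$, so $f\in\mathcal{N}[\tri]=\mathcal{N}[\poti]$. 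For orthogonality $\divf\perp\hardym$: for $f\in\divf$ and $g\in\lt$, $\langle f,\tric g\rangle=\langle f,\eta(\tfrac12+\dlay^\star)g+\nablat\slay g\rangle=\langle f,\nablat\slay g\rangle=\langle (\nablat\slay)^\star f,g\rangle=0$, using that $f$ is tangent (so the normal term drops) and $f\in\mathcal{N}[(\nablat\slay)^\star]$. Thus $\divf$ is an orthogonal summand sitting inside $\mathcal{N}[\poti]$, and we obtain the orthogonal decomposition
\begin{equation}
\ltd d=\hardym\oplus\divf\oplus\bigl(\mathcal{N}[\poti]\ominus\divf\bigr),\qquad \mathcal{N}[\poti]\ominus\divf=\mathcal{R}[\mathds{1}-\projd-\projm].
\end{equation}
It remains to identify $\mathcal{N}[\poti]\ominus\divf$ with $\Iod=\mathcal{N}[\poti]\cap\mathcal{N}[\poto]^\perp$. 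By the analogous (outer) theory — i.e. the mirror image of the first stage applied to $\poto$ — one has $\hardyp=\mathcal{N}[\poto]^\perp$, hence $\mathcal{N}[\poto]^\perp\cap\mathcal{N}[\poti]$ is a subspace of $\mathcal{N}[\poti]$; I would then show it equals the orthogonal complement of $\divf$ inside $\mathcal{N}[\poti]$ by proving $\divf=\mathcal{N}[\poti]\cap\mathcal{N}[\poto]$. That last identity is the crux: the inclusion $\divf\subseteq\mathcal{N}[\poti]\cap\mathcal{N}[\poto]$ is the same computation as above (symmetric in $\inter/\exter$, using \eqref{eq:limit of *double layer potential from outside} and \eqref{eq:limit for the double layer poential outside} for the outer side), while for the reverse inclusion, if $f\in\mathcal{N}[\poti]\cap\mathcal{N}[\poto]$ then $\pot f$ vanishes on both $\o$ and $\o^\exter$, so by the jump relations for $\nabla\SLay(\divt\text{-type})$ the normal component $f_\eta$ is forced to be $0$ and the tangential part has $(\nablat\slay)^\star f_{\tcomp}=0$; concretely, $\poti f\equiv 0$ gives $(\tfrac12+\dlay)f_\eta+(\nablat\slay)^\star f_{\tcomp}=0$ and $\poto f\equiv 0$ gives $-(\tfrac12-\dlay)f_\eta+(\nablat\slay)^\star f_{\tcomp}=0$ (here one must be careful about the constant/zero-mean subtleties governed by $\consti$ in Lemma \ref{lem:S1 constant}), and subtracting yields $f_\eta=0$, whence $(\nablat\slay)^\star f_{\tcomp}=0$, i.e. $f\in\divf$.

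The main obstacle I anticipate is precisely this last identity $\divf=\mathcal{N}[\poti]\cap\mathcal{N}[\poto]$ together with the clean identification $\mathcal{N}[\poti]\ominus\divf=\Iod$ — one has to handle the one-dimensional ambiguity coming from $\consti$ (the operator $\tfrac12+\dlay$ is invertible but $\tfrac12-\dlay^\star$ is only invertible on $\ltz$, and $\slay(1-\consti)$ being a nonzero constant is what makes the Dirichlet/Neumann uniqueness arguments go through), and to make sure that subtracting the inner and outer boundary relations legitimately yields $f_\eta=0$ rather than $f_\eta\in\mathrm{span}(1-\consti)$ times something — a vector $f_\eta$ that would in fact still be annihilated consistently. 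Closely related is checking that $\mathds{1}-\projd-\projm$ is genuinely an orthogonal projection, which reduces to the commutation/orthogonality facts $\projd\projm=\projm\projd=0$ (from $\divf\perp\hardym$) established above, so that $(\mathds{1}-\projd-\projm)$ is a difference of mutually annihilating orthogonal projections subtracted from the identity — hence itself an orthogonal projection onto $(\hardym\oplus\divf)^\perp=\Iod$. Everything else is bookkeeping with the boundedness and adjoint relations already recorded in Section \ref{sec:prelim}.
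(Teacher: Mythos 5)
Your plan is correct and follows essentially the same route as the paper's proof: invertibility of $\tri\tric$ yields the orthogonal projection $\projm$ onto $\hardym=\mathcal{R}\left[\tric\right]=\mathcal{N}\left[\poti\right]^{\perp}$, the divergence-free fields are split off as an orthogonal summand of $\mathcal{N}\left[\poti\right]$ using $\divf\perp\hardym$, and the identification of the remaining summand with $\Iod$ rests on the identity $\mathcal{N}\left[\poti\right]\cap\mathcal{N}\left[\poto\right]=\divf$, which you establish exactly as in Lemma \ref{lem:exclusively_silent} (subtract the inner and outer boundary relations to force $f_{\eta}=0$). The one spot to repair is your justification of the invertibility of $\tri\tric$, which as written is circular (you invoke $\mathcal{N}\left[\tric\right]=\mathcal{R}\left[\tri\right]^{\perp}=\{0\}$ to prove the injectivity you need); the paper instead proves coercivity directly, $\|\tric f\|^{2}\geq\left\|\left(\frac{1}{2}+\dlay^{\star}\right)f\right\|^{2}\geq C\|f\|^{2}$ by Lemma \ref{lem:invertable double layer Potential}, and applies Lax--Milgram---equivalently, injectivity of $\tric$ follows at once by applying the invertibility of $\frac{1}{2}+\dlay^{\star}$ to the normal component of $\tric g$, after which your ``injective with closed range'' argument goes through.
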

We will proceed in several steps. First, we derive an explicit expression
for the projection $\projd$ that leads to an explicit formula, in terms of potentials, 
for the Helmholtz-Hodge-decomposition of tangent fields on Lipschitz surfaces.
Next, we show that the operator $\tri\tric$ is invertible, thereby implying
that
$\projm$ introduced  in the theorem is a well defined orthogonal
projection. We conclude the proof by showing that the ranges of $\projm$ and $\projv$ are as stated. 

In what follows, we let $\restri$ denote the canonical projection of
$\lt$ onto $\ltz$.
\begin{lem}
\label{lem:grad SLP invertable}The operator $\restri\left(\nablat\slay\right)^{\star}\nablat\slay \colon \ltz\to\ltz$
is self-adjoint and invertible.
\end{lem}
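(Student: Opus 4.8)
The plan is to establish self-adjointness by a direct computation and then to deduce invertibility from a coercivity estimate, via the Lax--Milgram theorem.

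First I would observe that $\restri$, being the canonical projection onto $\ltz=\{1\}^{\perp}$, is an orthogonal projection, hence self-adjoint on $\lt$ and equal to the identity on $\ltz$. Consequently, for $f,g\in\ltz$,
\[
\left\langle\restri\left(\nablat\slay\right)^{\star}\nablat\slay\,f,\,g\right\rangle
=\left\langle\left(\nablat\slay\right)^{\star}\nablat\slay\,f,\,\restri g\right\rangle
=\left\langle\nablat\slay f,\,\nablat\slay g\right\rangle,
\]
which is symmetric in $f$ and $g$; this shows that $\restri\left(\nablat\slay\right)^{\star}\nablat\slay$ is self-adjoint and non-negative on $\ltz$, and taking $g=f$ gives the identity $\left\langle\restri\left(\nablat\slay\right)^{\star}\nablat\slay\,f,\,f\right\rangle=\|\nablat\slay f\|^{2}$.

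Second, and this is the crux, I would prove the coercivity estimate $\|\nablat\slay f\|\geq c\,\|f\|$ for all $f\in\ltz$, with $c>0$ independent of $f$. To do so I would chain two facts already available in the preceding material: by Lemma \ref{lem:invertable double layer Potential} the operator $\tfrac12-\dlay^{\star}$ is invertible on $\ltz$, so $\|f\|\leq C_{1}\,\|(\tfrac12-\dlay^{\star})f\|$ there; and inequality \eqref{eq:lower bound for tangent gradient of SLP} gives $\|(\tfrac12-\dlay^{\star})f\|\leq C_{2}\,\|\nablat\slay f\|$ for every $f\in\lt$. Composing the two yields the estimate with $c=(C_{1}C_{2})^{-1}$. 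This is consistent with the fact, noted in Remark \ref{rem:null-space-of-K-S}, that $\mathcal{N}[\nablat\slay]=\mathcal{N}[\tfrac12-\dlay^{\star}]$ is spanned by $1-\consti$, which meets $\ltz$ only in $0$ because $\consti\in\ltz$; the role of the two displayed inequalities is precisely to upgrade this injectivity to a uniform lower bound.

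Finally I would combine the two steps: for $f\in\ltz$,
\[
\left\langle\restri\left(\nablat\slay\right)^{\star}\nablat\slay\,f,\,f\right\rangle=\|\nablat\slay f\|^{2}\geq c^{2}\,\|f\|^{2},
\]
so the bounded self-adjoint operator $\restri\left(\nablat\slay\right)^{\star}\nablat\slay$ is coercive on the Hilbert space $\ltz$ and therefore has a bounded inverse, e.g.\ by the Lax--Milgram theorem. The only step demanding genuine input is the coercivity bound; once Lemma \ref{lem:invertable double layer Potential} and \eqref{eq:lower bound for tangent gradient of SLP} are in hand, the rest is formal.
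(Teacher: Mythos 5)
Your proposal is correct and follows essentially the same route as the paper: self-adjointness by a direct computation using that $\restri$ acts as the identity on $\ltz$, and coercivity of the associated bilinear form obtained by combining the invertibility of $\frac{1}{2}-\dlay^{\star}$ on $\ltz$ (Lemma \ref{lem:invertable double layer Potential}) with the estimate \eqref{eq:lower bound for tangent gradient of SLP}, followed by Lax--Milgram. You merely make the chain of inequalities behind the coercivity more explicit than the paper does.
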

\begin{proof}
The operator $\restri\left(\nablat\slay\right)^{\star}\nablat\slay$ is self-adjoint, since for $f,g\in\ltz$ we have: 
\begin{align}
\left\langle \restri\left(\nablat\slay\right)^{\star}\nablat\slay f,g\right\rangle  & =\left\langle \left(\nablat\slay\right)^{\star}\nablat\slay f,g\right\rangle \nonumber \\
 & =\left\langle f,\left(\nablat\slay\right)^{\star}\nablat\slay g\right\rangle =\left\langle f,\restri\left(\nablat\slay\right)^{\star}\nablat\slay g\right\rangle.
\end{align}
By the Riesz lemma it defines a symmetric and continuous bilinear form,
say $M$, such that for $f,g\in\ltz$ 
\begin{equation}
M\left(f,g\right)\doteq\left\langle \restri\left(\nablat\slay\right)^{\star}\nablat\slay f,g\right\rangle.
\end{equation}
From (\ref{eq:lower bound for tangent gradient of SLP}) and Lemma
\ref{lem:invertable double layer Potential}, it follows
that $M$ is coercive, and by the Lax-Milgram theorem $\restri\left(\nablat\slay\right)^{\star}\nablat\slay$
has bounded inverse.
\end{proof}
\begin{lem}
\label{thm:Hodge decomposition on Lipschitz boundary}Let $\projz$
and $\projd$ be the projections from definition \ref{def:tangent spaces}.
Then, 
\begin{align}
\projd & =\projz\left(\mathds{1}-\nablat S\left(\restri\left(\nablat\slay\right)^{\star}\nablat\slay\right)^{-1}\left(\nablat\slay\right)^{\star}\right)\projz.\label{eq:projection D}
\end{align}
\end{lem}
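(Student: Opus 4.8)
The plan is to reduce \eqref{eq:projection D} to the orthogonal Helmholtz--Hodge splitting $\tang=\grad\oplus\divf$ and then, for a fixed tangent field, to recover its gradient part explicitly by inverting the operator $A\doteq\restri(\nablat\slay)^{\star}\nablat\slay$ provided by Lemma \ref{lem:grad SLP invertable}.

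First I would verify that $\tang=\grad\oplus\divf$ is a genuine orthogonal direct sum with $\grad$ closed. From $Ah=\restri(\nablat\slay)^{\star}\nablat\slay h$ one reads off $\|h\|\le\|A^{-1}\|\,\|(\nablat\slay)^{\star}\|\,\|\nablat\slay h\|$ for $h\in\ltz$, so $\nablat\slay$ is bounded below on $\ltz$ and $\nablat\slay(\ltz)$ is closed. Since $\nablat\slay 1=\nablat\slay\consti$ with $\consti\in\ltz$ by Lemma \ref{lem:S1 constant} (see \eqref{eq:gradient of SLP}), every $f\in\lt$ splits as $f=f_{0}+c$ with $f_{0}\in\ltz$ and $c\in\mathbb{R}$, whence $\nablat\slay f=\nablat\slay(f_{0}+c\,\consti)$ with $f_{0}+c\,\consti\in\ltz$; therefore $\grad=\nablat\slay(\ltz)$ is closed, $\divf=\grad^{\perp}$ by Definition \ref{def:tangent spaces}, and $\nablat\slay\colon\ltz\to\grad$ is a bounded bijection (injectivity because $\mathcal{N}[\nablat\slay]$ is spanned by $1-\consti$ by Remark \ref{rem:null-space-of-K-S}, and $1-\consti\notin\ltz$ since $\langle1,1-\consti\rangle=\sigma(\bo)\ne0$).

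Next comes the main computation, carried out on $\tangz$. I would fix $g\in\tangz$ and write its Helmholtz--Hodge decomposition $g=\nablat\slay h+d$, with $h\in\ltz$ uniquely determined and $d=\projd g\in\divf$. Applying $(\nablat\slay)^{\star}$ and using $(\nablat\slay)^{\star}d=0$ gives $(\nablat\slay)^{\star}g=(\nablat\slay)^{\star}\nablat\slay h$; since $g\in\tangz$ the left-hand side lies in $\ltz$, so $(\nablat\slay)^{\star}\nablat\slay h\in\ltz$, the truncation $\restri$ acts trivially on it, and hence $Ah=(\nablat\slay)^{\star}g$, i.e. $h=A^{-1}(\nablat\slay)^{\star}g$. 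Therefore
\[
\projd g=g-\nablat\slay h=\bigl(\mathds{1}-\nablat\slay A^{-1}(\nablat\slay)^{\star}\bigr)g\qquad(g\in\tangz).
\]
Because $\projd g\in\divf\subseteq\tangz$ one may apply $\projz$ on the left without change, and $\projz g=g$, so $\projd g=\projz\bigl(\mathds{1}-\nablat\slay A^{-1}(\nablat\slay)^{\star}\bigr)\projz g$ for $g\in\tangz$; here $(\nablat\slay)^{\star}g\in\ltz$ makes the use of $A^{-1}$ legitimate, and on $\tangz$ one has $(\nablat\slay)^{\star}=\restri(\nablat\slay)^{\star}$, matching the operator in \eqref{eq:projection D}.

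Finally I would pass to all of $\tang$. Since $\divf\subseteq\tangz$ (for $d\in\divf$, $(\nablat\slay)^{\star}d=0\in\ltz$), orthogonal projections satisfy $\projd=\projd\projz$; for arbitrary $f\in\tang$, set $g=\projz f\in\tangz$ and combine the previous step with $\projz^{2}=\projz$ to get $\projd f=\projd g=\projz(\mathds{1}-\nablat\slay A^{-1}(\nablat\slay)^{\star})\projz f$, which is exactly \eqref{eq:projection D}. The step I expect to be the real obstacle is the mean-value bookkeeping in the main computation: one must ensure that the $h$ obtained by solving $Ah=(\nablat\slay)^{\star}g$ is the very same $h$ occurring in the Helmholtz decomposition of $g$, which works precisely because for $g\in\tangz$ the field $(\nablat\slay)^{\star}\nablat\slay h$ already has zero mean, so that the projection $\restri$ built into $A$ may be dropped at that point; the outer $\projz$ in \eqref{eq:projection D} is then a harmless redundancy that makes the formula manifestly self-adjoint. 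Everything else is routine once the splitting of the second paragraph is in place.
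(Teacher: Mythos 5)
Your proposal is correct. It rests on the same central ingredient as the paper's proof---the invertibility of $\restri\left(\nablat\slay\right)^{\star}\nablat\slay$ on $\ltz$ from Lemma \ref{lem:grad SLP invertable}---but the logical structure is genuinely different. The paper takes the right-hand side of \eqref{eq:projection D} as a candidate operator and \emph{verifies} that it is an orthogonal projection with range $\divf$: this requires checking that $A=\nablat\slay(\restri(\nablat\slay)^{\star}\nablat\slay)^{-1}(\nablat\slay)^{\star}$ preserves $\tangz$, that $AA\projz=A\projz$, idempotence, self-adjointness, and two inclusions for the range. You instead \emph{derive} the formula: having shown that $\grad=\nablat\slay(\ltz)$ is closed (via the lower bound $\|h\|\leq\|A^{-1}\|\,\|(\nablat\slay)^{\star}\|\,\|\nablat\slay h\|$ on $\ltz$, together with the reduction $\nablat\slay f=\nablat\slay(f_{0}+c\,\consti)$ and the injectivity of $\nablat\slay$ on $\ltz$ from Remark \ref{rem:null-space-of-K-S}), you solve the normal equation $\restri(\nablat\slay)^{\star}\nablat\slay\,h=(\nablat\slay)^{\star}g$ for the gradient component of $g\in\tangz$ and read off $\projd g$ directly; the algebraic identities the paper must verify then come for free, and the extension from $\tangz$ to $\tang$ via $\projd=\projd\projz$ is standard. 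Your route has the added merit of making explicit the closedness of $\grad$ (hence that $\tang=\grad\oplus\divf$ is a bona fide orthogonal decomposition), a point the paper leaves implicit; the paper's route has the merit of not presupposing that decomposition, since it manufactures the projection from scratch. The one place where care is genuinely needed---that $(\nablat\slay)^{\star}g$ lands in $\ltz$ for $g\in\tangz$ so that the inverse applies and $\restri$ may be dropped---is exactly the point you flag and handle correctly.
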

\begin{proof}
Let $\consti$ be as in Lemma \ref{lem:S1 constant}.
The projection onto $\tangz$ can be written as
\begin{align}
\projz f & =\left(\mathds{1}-\projs\right)f, 
& \text{with} \qquad
\projs f & =\begin{cases}
\left\langle f,\nablat\slay1\right\rangle \frac{\nablat\slay1}{\|\nablat\slay1\|^{2}} & \text{if }\,\consti\neq0,\\
0 & \text{if }\,\consti=0.
\end{cases}
\end{align}
To see this, observe that $\projz$ and $\projs$ as defined above are complementary  projections. Then, for $f\in\tang$, we have that
\begin{align*}
\int_{\bo} \left(\nablat\slay\right)^{\star} \projz f 
=\int_{\bo}\left(\nablat\slay\right)^{\star}(\mathds{1}-\projs)f\left(\Q\right)\measure{\Q}%
= \left\langle f, \nablat\slay 1\right\rangle - \left\langle f , \nablat\slay 1\right\rangle
=0,
\end{align*}
and thus $\projz f $ is in $\tangz$.
Conversely, if $f$ is in $\tangz$ then $\projs f =0$ and $\projz f = f$. Hence, $\projz\tang=\tangz$. 

Next, we write the operator $\projd$ from (\ref{eq:projection D})
as 
\begin{align}
\projd & =\projz\left(\mathds{1}-A\right)\projz\qquad\text{with }\qquad A=\nablat\slay\left(\restri\left(\nablat\slay\right)^{\star}\nablat\slay\right)^{-1}\left(\nablat\slay\right)^{\star},\label{eq:projection PD}
\end{align}
and we show that it is the orthogonal projection onto $\divf$. 

The operator $\projz A\projz$ is well defined, because $\projz$
projects $\tang$ onto $\tangz$ and $\left(\nablat\slay\right)^{\star}$
maps the latter into $\ltz$, where the operator $\left(\restri\left(\nablat\slay\right)^{\star}\nablat\slay\right)^{-1}$
is well defined by Lemma \ref{lem:grad SLP invertable}. Also, the
operator $A$ preserves the space $\tangz$; that is, for $f\in\tangz$,
we have $Af\in\tangz$. Indeed, observe that if $f$ is in $\tangz$, then by (\ref{eq:gradient of SLP})  and Lemma \ref{lem:grad SLP invertable} we have:
\begin{multline*}
\left\langle\left(\nablat\slay\right)^{\star}Af,1\right\rangle
=\left\langle Af,\nablat\slay\consti\right\rangle 
=\left\langle (R_0(\nablat\slay)^*\nablat\slay)^{-1} (\nablat\slay)^* f,
(\nablat\slay)^*\nablat\slay \nu_0\right\rangle \\
=\left\langle (R_0(\nablat\slay)^*\nablat\slay)^{-1} (\nablat\slay)^* f,
R_0(\nablat\slay)^*\nablat\slay \nu_0\right\rangle
=
\left\langle f,\nablat\slay\consti\right\rangle =\left\langle f,\nablat\slay1\right\rangle =0.
\end{multline*}
Consequently, $Af$ is in $\tangz$. By a similar calculation, $AA\projz=A\projz$ and thus  $A\projz A\projz=AA\projz=A\projz$.

From the above we see that $\projd$ is idempotent, since
\begin{equation}
\projd\projd=\projz\left(\mathds{1}-A\right)\projz\left(\mathds{1}-A\right)\projz=\projz\left(\mathds{1}-A\right)\projz=\projd.
\end{equation}
Also, $\projd$ is self-adjoint, because $\mathds{1},\,\projz,$ and
$A$ are self-adjoint. Thus, $\projd$ is an orthogonal projection. 

We claim that the range of $\projd$ is $\divf$. To support this
claim, it is enough to show that $f\in\divf$ holds if and only if
$\projd f=f$. Assume that $\projd f=f$ holds. Since the range of $\projd$ is in
the range of $\projz$, it follows that $\projz f=f\in\tangz$, and
since $A$ preserves $\tangz$ we get from (\ref{eq:projection PD}) that
\begin{equation}
Af=\projz A\projz f=\projz f-\projd f=f-f=0.
\end{equation}
This implies $\left(\nablat\slay\right)^{\star}f=0$, as the remaining
operators in the definition of $A$ are injective on functions with zero mean, by Lemma \ref{lem:S1 constant}. Therefore, $f$ is in
$\divf$.

Conversely, take  $f\in\divf$ and write, $f=c\nablat\slay1+\projz f\left(=\projs f+\projz f\right)$
for some constant $c\in\mathbb{R}$. Then, 
\begin{equation}
0=\left\langle \left(\nablat\slay\right)^{\star}f,1\right\rangle =c\left\langle \nablat\slay1,\nablat\slay1\right\rangle +\left\langle \projz f,\nablat\slay1\right\rangle =c\|\nablat\slay1\|^{2}.
\end{equation}
Hence, either $\nablat\slay1 =0$ or $\nablat\slay1\neq0$, and in the latter case  $c=0$; thus, $\projz f=f$ in all cases, so that
$f$ is in $\tangz$ and hence, 
\begin{align*}
A\projz f=\nablat\slay\left(\restri\left(\nablat\slay\right)^{\star}\nablat\slay\right)^{-1}\left(\nablat\slay\right)^{\star}f & =0,
\end{align*}
ensuing that $\projd f=\projz f + \projz A \projz f=f$. This shows that the range of $\projd$
is $\divf$ and completes the proof.
\end{proof}

Since divergence-free fields are orthogonal to gradients, it follows that
\begin{equation}
\projg\doteq\mathds{1}-\projd=\projs+\projz A\projz,\label{eq:projection PG}
\end{equation}
is the orthogonal projection  onto $\grad$, so we can write the Helmholtz-Hodge
decomposition explicitly as
\begin{equation}
  \label{HHdec2}
\tang=\projg\tang+\projd\tang=\grad+\divf.
\end{equation}
\begin{rem}
  \label{HT}
  The existence of the Helmholtz-Hodge decomposition is trivial, for it reduces to the fact that a Hilbert space decomposes as the sum of a closed subspace and
  its orthogonal complement. However, the point in \eqref{HHdec2} is the explicit expression in terms of
  layer potentials.
  Note that the divergence-free term can split further by Hodge theory on Lipschitz orientable Riemannian manifolds
  \cite{teleman1983index}.
\end{rem}
\begin{rem}
\label{rem:tangent projections on fields}
The definition of the projections $\projd$
and $\projg$ can be extended to the whole of $\ltd d$ as follows: for $f=\eta f_{\eta} + f_{\tcomp}\in\ltd d$ put
\begin{align}
\projD f & =\projd f_{\tcomp}, & \projDp f & =\eta f_{\eta}+\projg f_{\tcomp}.
\end{align}
\end{rem}

\begin{cor}
\label{cor:tangent part deteremind by the normal}Let $f=\eta f_{\eta}+f_{\tcomp}$
be in $\mathcal{N}\left[\poti\right]$.
Then, $f_{\eta}$ determines $f_{\tcomp}$ uniquely up to a divergence-free tangent field. 
\end{cor}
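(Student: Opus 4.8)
The plan is to read off the statement directly from the description of $\mathcal{N}[\poti]$ obtained in Theorem \ref{thm:The-inner-potenential}. By that theorem, $\mathcal{N}\left[\poti\right]=\mathcal{N}\left[\tri\right]$, and a field $f=\eta f_{\eta}+f_{\tcomp}$ lies in this null space if and only if
\[
\left(\tfrac{1}{2}+\dlay\right)f_{\eta}=-\left(\nablat\slay\right)^{\star}f_{\tcomp}.
\]
So, holding $f_{\eta}$ fixed, the admissible tangent parts $f_{\tcomp}$ are exactly the solutions in $\tang$ of this affine equation; two such solutions differ by an element of the null space of $\left(\nablat\slay\right)^{\star}$ restricted to $\tang$.

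The next step is to identify that homogeneous solution set. By Remark \ref{rem:weak divergence} (equivalently, by the very definition of $\divf$ in Definition \ref{def:tangent spaces}), a tangent field $w$ satisfies $\left(\nablat\slay\right)^{\star}w=0$ precisely when $w$ is divergence-free, i.e. $w\in\divf$. Hence, if $f=\eta f_{\eta}+f_{\tcomp}$ and $f'=\eta f_{\eta}+f'_{\tcomp}$ both lie in $\mathcal{N}\left[\poti\right]$ with the same normal component, then $\left(\nablat\slay\right)^{\star}(f_{\tcomp}-f'_{\tcomp})=0$, so $f_{\tcomp}-f'_{\tcomp}\in\divf$. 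Conversely, for any $f\in\mathcal{N}\left[\poti\right]$ and any $w\in\divf$, the field $f+w$ has the same normal component $f_{\eta}$ and satisfies $\left(\nablat\slay\right)^{\star}(f_{\tcomp}+w)=\left(\nablat\slay\right)^{\star}f_{\tcomp}=-\left(\tfrac{1}{2}+\dlay\right)f_{\eta}$, hence $f+w\in\mathcal{N}\left[\poti\right]$. This establishes both the uniqueness up to $\divf$ and the fact that the ambiguity is exactly $\divf$, nothing smaller.

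There is no real obstacle here: the corollary is a direct unpacking of Theorem \ref{thm:The-inner-potenential} together with the identification of $\mathcal{N}\left[\left(\nablat\slay\right)^{\star}\right]\cap\tang$ with $\divf$ from Remark \ref{rem:weak divergence}. The only point worth stating carefully is that we are asserting a uniqueness statement, so existence of a normal/tangent splitting compatible with membership in $\mathcal{N}\left[\poti\right]$ is granted by hypothesis and need not be argued; and that the coset structure is genuinely a coset of $\divf$, for which the converse direction (adding an arbitrary divergence-free field keeps $f$ in the null space) is needed.
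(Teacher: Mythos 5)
Your proof is correct, and it is in fact the shortest route to the statement as literally phrased: starting from the relation $\left(\frac{1}{2}+\dlay\right)f_{\eta}=-\left(\nablat\slay\right)^{\star}f_{\tcomp}$ of Theorem \ref{thm:npi}, the set of admissible tangent parts for a fixed $f_{\eta}$ is a coset of $\mathcal{N}\left[\left(\nablat\slay\right)^{\star}\right]\cap\tang=\divf$, and you correctly verify both inclusions. The paper's proof starts from the same relation but then does strictly more work: it discards the divergence-free part, writes $f_{\tcomp}=\nablat\slay(c+\varphi)$ with $\varphi\in\ltz$, determines $c$ from a zero-mean compatibility condition, and inverts $\restri\left(\nablat\slay\right)^{\star}\nablat\slay$ (Lemma \ref{lem:grad SLP invertable}) to obtain an \emph{explicit formula} \eqref{eq:tangent from normal} for the gradient part of $f_{\tcomp}$ in terms of $f_{\eta}$. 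That extra constructive content is not needed for the uniqueness claim, but it is precisely what gets cited later: in the proof of Theorem \ref{thm:ahh} the authors prescribe a normal component $h_{\eta}$ and invoke ``the proof of Corollary \ref{cor:tangent part deteremind by the normal}'' to \emph{produce} a compatible $h_{\tcomp}$, i.e.\ the existence direction that you explicitly decline to address. So your argument fully proves the corollary, but if it replaced the paper's proof verbatim, the later appeal to its constructive step would need to be supplemented separately.
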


\begin{proof}
For $f\in\mathcal{N}\left[\poti\right]$, the relation between $f_{\eta}$ and $f_{\tcomp}$ from  Theorem \ref{thm:null space for PI} reads as
\begin{equation}
\left(\frac{1}{2}+\dlay\right)f_{\eta}=-\left(\nablat\slay\right)^{\star} f_{\tcomp}.\label{eq:relation_between_components}
\end{equation}
The divergence-free component of $f_{\tcomp}$ does not contribute to this relation as it satisfies $\left(\nablat\slay\right)^{\star}\projd f_{\tcomp}=0$. Therefore, we can assume without loss of generality that $f_{\tcomp}$ is a gradient field. To prove the corollary, we have to invert the right hand side of \eqref{eq:relation_between_components}. The difficulty is that the left hand side of
\eqref{eq:relation_between_components} may not have zero mean and thus, we cannot directly use Lemma \ref{lem:grad SLP invertable}.

To circumvent this, we split $f_{\tcomp}$ into a field with zero mean plus
$\nablat\slay c$ for some constant $c$. Specifically, since $f_{\tcomp}$ is a gradient field, there exists a constant $c$ and a function $\varphi\in\ltz$ such that $f_{\tcomp}=\nablat\slay\left(c + \varphi \right)$, and then $\nablat\slay\varphi$ is in $\tangz$ (this is simply the decomposition $f_{\tcomp} = \projg f_{\tcomp} = \projs f_{\tcomp} + \projz A \projz f_{\tcomp}$ where we set $\nablat\slay\varphi = \projz A\projz f_{\tcomp} $, which is possible because both $f_{\tcomp}$ and $\projs f_{\tcomp}$ are gradients). Inserting this decomposition into \eqref{eq:relation_between_components} and rearranging terms, we get that
\begin{equation}
\left(\nablat\slay\right)^{\star}\nablat\slay\varphi=-\left(\frac{1}{2}+\dlay\right)f_{\eta}-c\left(\nablat\slay\right)^{\star}\nablat\slay1.
\end{equation}
Since the left hand side is in $\ltz$, the right hand side also has zero
mean. Therefore, either $\nablat\slay1=0$ and then Lemma
\ref{lem:grad SLP invertable} achieves the proof, or else
\begin{align}
c=-\frac{1}{\|\nablat\slay 1 \|^{2}} \left\langle\left(\frac{1}{2}+K\right) f_{\eta},1\right\rangle.
\end{align}
In the latter case, applying
$(\restri\left(\nablat\slay\right)^{\star}\nablat\slay)^{-1}$
gives us
\begin{equation}
\varphi=\left(\restri\left(\nablat\slay\right)^{\star}\nablat\slay\right)^{-1}\left[\left\langle\left(\frac{1}{2}+K\right)f_{\eta},1\right\rangle\frac{\left(\nablat\slay\right)^{\star}\nablat\slay1}{\|\nablat\slay 1\|^{2}}-\left(\frac{1}{2}+\dlay\right)f_{\eta}\right],\label{eq:tangent from normal}
\end{equation}
whence the constant $c$ and the function $\varphi$ are both determined by $f_{\eta}$. Thus, so is the tangent field $f_{\tcomp}$, as desired.
\end{proof}

We turn to the inverse of $\tri\tric$.
\begin{lem}
\label{lem:properties of B-star}Let $\tri$ be the inner boundary
operator from definition \ref{def:ibo} and let $\tric$
denote its adjoint from Lemma \ref{lem:adjoint of BI}, then $\tri\tric \colon \lt\to\lt$
is invertible.
\end{lem}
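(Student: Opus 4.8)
The plan is to reduce the invertibility of $\tri\tric$ to the observation that $\tri$ is already surjective onto $\lt$. First I would note that, writing a field as $f=\eta f_{\eta}+f_{\tcomp}$ and taking $f_{\tcomp}=0$, the definition of $\tri$ gives $\tri(\eta f_{\eta})=\left(\tfrac12+\dlay\right)f_{\eta}$; since $\left(\tfrac12+\dlay\right)\colon\lt\to\lt$ is invertible by Lemma \ref{lem:invertable double layer Potential}, it is in particular onto, and hence so is $\tri$, i.e. $\mathcal{R}[\tri]=\lt$.

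Next I would pass to the adjoint. The adjoint of a bounded surjection between Hilbert spaces is bounded below (a consequence of the open mapping theorem): given $g\in\lt$, choose $f$ with $\tri f=g$ and $\|f\|\le C\|g\|$, so that $\|g\|^{2}=\langle g,\tri f\rangle=\langle\tric g,f\rangle\le C\|\tric g\|\,\|g\|$, whence $\|\tric g\|\ge C^{-1}\|g\|$. Equivalently, one can combine Lemma \ref{lem:adjoint of BI} --- which shows $\mathcal{R}[\tric]=\hardym$ is closed --- with $\mathcal{N}[\tric]=\mathcal{R}[\tri]^{\perp}=\{0\}$ to see that $\tric$ is injective with closed range, hence bounded below.

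Finally, $\tri\tric$ is self-adjoint and $\langle\tri\tric g,g\rangle=\|\tric g\|^{2}\ge C^{-2}\|g\|^{2}$, so by Cauchy--Schwarz $\|\tri\tric g\|\ge C^{-2}\|g\|$; thus $\tri\tric$ is injective with closed range, and being self-adjoint its range is also dense (its orthogonal complement equals the null space), so $\tri\tric$ is onto. Hence $\tri\tric$ is invertible.

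I do not anticipate a real obstacle: the essential point is simply that the normal half of $\tri$ contributes the invertible operator $\tfrac12+\dlay$, after which the argument is routine Hilbert-space functional analysis. The only step deserving a word of care is the passage to $\tric$ in the second paragraph, where one should be explicit about why surjectivity of $\tri$ forces a lower bound on $\tric$ (equivalently, cite the closed-range identification of $\mathcal{R}[\tric]$ with $\hardym$ from Lemma \ref{lem:adjoint of BI}).
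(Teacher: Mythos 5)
Your argument is correct and rests on the same key fact as the paper's proof, namely the invertibility of $\frac{1}{2}+\dlay$ on $\lt$ (Lemma \ref{lem:invertable double layer Potential}), followed by coercivity of $\langle \tri\tric g,g\rangle=\|\tric g\|^{2}$ and the standard Lax--Milgram-type conclusion. The only cosmetic difference is that the paper obtains the lower bound $\|\tric g\|\geq c\|g\|$ directly from the orthogonal splitting $\|\tric g\|^{2}=\|(\frac{1}{2}+\dlay^{\star})g\|^{2}+\|\nablat\slay g\|^{2}$, whereas you route it through surjectivity of $\tri$ and adjoint duality; both are fine.
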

\begin{proof}
Define a symmetric continuous bilinear form, $M$, such that, 
\begin{align}
M\left(f,g\right) & \doteq\left\langle \tri\tric f,g\right\rangle \qquad\left(f,g\in\lt\right).
\end{align}
Since $\left(\frac{1}{2}+\dlay^{\star}\right)$ is invertible, there
exists a positive constant $C$ such that, 
\begin{align}
M\left(f,f\right) & =\|\tric f\|^{2}\geq\left\|\left(\frac{1}{2}+\dlay^{\star}\right)f\right\|^{2}\geq C\|f\|^{2}.
\end{align}
Thus, $M$ is coercive and $\tri\tric$ is invertible by the Lax-Milgram
theorem.
\end{proof}
The proof of the Theorem \ref{thm:L2 decomposition in} is now an
easy consequence of the previous results. 
\begin{proof}[Proof of Theorem \ref{thm:L2 decomposition in}]
Let $\projd$ be as above, and recall the  operators introduced in
the theorem: $\projm=\tric\left(\tri\tric\right)^{-1}\tri$ and $\projv=\mathds{1}-\projd-\projm$.
We show the following: i) $\projm$ is the  orthogonal projection onto $\hardym$; ii) the range of $\projd$ is 
$\left\{ f\in\mathcal{N}\left[\poti\right]\,:\,f_{\eta}=0\right\} $;
iii) $\projv$ is the orthogonal projection onto $\Iod$.

i): that $\projm$ is an orthogonal projection is readily verified,
since it is self-adjoint and idempotent. From Corollary \ref{thm:The-inner-potenential} 
we have that $\mathcal{N}\left[\poti\right]=\mathcal{N}\left[\tri\right]=\mathcal{N}\left[\projm\right]$,
and from Lemma \ref{lem:adjoint of BI} we get  that $\mathcal{R}\left[\projm\right]=\mathcal{N}\left[\poti\right]^{\perp}=\hardym$.
This proves i).

ii): by Lemma \ref{thm:Hodge decomposition on Lipschitz boundary},
$\projd$ projects onto $\divf$. The rest of the assertion follows
from \eqref{eq:null space of inner potential} and the injectivity of $\left(\frac{1}{2}+\dlay\right)$.

iii): we show the projection property first. The operator $\projv$
is self-adjoint because $\mathds{1},\,\projd,$ and $\projm$
are self-adjoint. For the idempotence we compute, 
\begin{equation}
\projv\projv=\mathds{1}-\projd+\projm-\projm\left(\mathds{1}-\projd\right)-\left(\mathds{1}-\projd\right)\projm.\label{eq:idempotence of P_vert}
\end{equation}
By i) and ii) the space $\hardym$ is orthogonal to $\divf$, so that $\projm\projd=\projd\projm=0$ and
\begin{align}
\projm\left(\mathds{1}-\projd\right) & =\projm, & \left(\mathds{1}-\projd\right)\projm & =\projm.
\end{align}
Using these identities, the right hand side of (\ref{eq:idempotence of P_vert})
becomes $\mathds{1}-\projd-\projm=\projv$.

From the definition of $\projv$, it is clear that $\projv\ltd d$ is the orthogonal complement of $\divf$ in $\mathcal{N}[\poti]$.
Further, if $f$ lies in $\projv \ltd d$ and is silent outside as well,
then $f\in\mathcal{N}[\poti]\cap\mathcal{N}[\poto]$. That $\mathcal{N}[\poti]\cap\mathcal{N}[\poto]=\divf$
is shown in Lemma \ref{lem:exclusively_silent}, further below. Assuming this result for now, it follows that the only field that is in $\projv\ltd d$ and in the intersection of the null spaces is the zero field. This proves the assertion.

Finally, we have that $\projm+\projv+\projd=\mathds{1},$ and thus,
\begin{equation}
\ltd d=\projv\ltd d \oplus \projm\ltd d\oplus \projd\ltd d.
\end{equation}
This completes the proof.
\end{proof}

\subsection{Outer decomposition}\label{sec:outer decomposition}

In this section we derive an orthogonal decomposition of $\ltd d$
based on the null space of the outer scalar potential. 
\begin{defn}
Define the operator, $\tro \colon \ltd d\to\lt $, as
\begin{equation}
\tro f\doteq-\left(\frac{1}{2}-\dlay\right)f_{\eta}+\left(\nablat\slay\right)^{\star}f_{\tcomp};\label{eq:definition of B-1}
\end{equation}
it is linear and bounded. We call $\tro$ the outer boundary
operator.
\end{defn}
As in the previous section, we can write the outer potential as a
harmonic extension of the outer boundary operator. Contrary to that
section, however, we need to take separate care of the constant
mode, because the operators $\frac{1}{2}-\dlay$ and $\frac{1}{2}-\dlay^{\star}$
are invertible on $\ltz$ only.
\begin{thm}\label{cor:npo}
\label{thm:The-outer-potenential}\label{thm:null space for PO}Let
$\consti\in\ltz$ denote the function from Lemma \ref{lem:S1 constant}.
The outer potential of $f\in\ltd d$ can be expressed as
\begin{multline}
\poto f\left(\X\right)=\mathcal{-\DLay}\left(\frac{1}{2}-\dlay\right)^{-1}\left(\tro f-\left\langle \tro f,1\right\rangle \right)\left(\X\right)\\
+\left\langle \tro f,1\right\rangle \frac{\SLay\left(1-\consti\right)\left(\X\right)}{\left|\slay\left(1-\consti\right)\right|},\qquad\left(\X\in\o^{\exter}\right).\label{eq:outer potential with B}
\end{multline}
The null space of $\poti$ is given by
\begin{equation}
\mathcal{N}\left[\poto\right]=\mathcal{N}\left[\tro\right]=\left\{ f\in\ltd d\,:\left(\frac{1}{2}-\dlay\right)f_{\eta}=\left(\nablat\slay\right)^{\star}f_{\tcomp}\right\}.\label{eq:null space of outer potential}
\end{equation}
\end{thm}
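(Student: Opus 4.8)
The plan is to follow the proof of Theorem~\ref{thm:npi} as closely as possible; the only genuine novelty is the bookkeeping of the constant mode, forced on us by the fact that $\tfrac12-\dlay$ is invertible only after the constants are removed. Note first that the second equality in \eqref{eq:null space of outer potential} is immediate from the definition \eqref{eq:definition of B-1} of $\tro$, so the real content is the representation \eqref{eq:outer potential with B} together with $\mathcal{N}\left[\poto\right]=\mathcal{N}\left[\tro\right]$.

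First I would compute the boundary trace of $\poto f$. Writing $f=\eta f_\eta+f_\tcomp$ and using $\DLay f_\eta=\mathcal{P}(\eta f_\eta)$ with the linearity of $\mathcal{P}$, one has for $\X\in\o^{\exter}$ that $\poto f(\X)=\DLay f_\eta(\X)+\poto f_\tcomp(\X)$. Letting $\X\to\P\in\bo$ nontangentially from $\o^{\exter}$ and invoking \eqref{eq:limit for the double layer poential outside} for the first term and Lemma~\ref{gradTP} for the second (whose limit is the same from either side), we obtain
\[
\lim_{\conlimo}\poto f(\X)=-\Bigl(\tfrac12-\dlay\Bigr)f_\eta(\P)+\bigl(\nablat\slay\bigr)^{\star}f_\tcomp(\P)=\tro f(\P).
\]
In addition $\poto f$ is harmonic on $\o^{\exter}$, has $L^2$-bounded nontangential maximal function, and satisfies $\poto f(\X)=O(|\X|^{1-d})\to0$ as $\X\to\infty$, since the kernel of $\mathcal{P}$ decays like $|\X|^{1-d}$ and $\sigma$ is finite. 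Hence $\poto f$ is \emph{the} solution of the exterior Dirichlet problem with datum $\tro f$ that vanishes at infinity, by uniqueness (the exterior analogue of \cite[cor.~3.2]{ver84}).

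It remains to exhibit that solution explicitly, and here is the one place where the outer case really differs from the inner one. Since $\DLay1\equiv0$ on $\o^{\exter}$, a nonzero constant cannot arise as the boundary datum of an exterior double layer potential, which is precisely why $\tfrac12-\dlay$ loses the constant mode; by Lemma~\ref{lem:S1 constant}, however, $\slay(1-\consti)$ is a nonzero constant, so $\SLay(1-\consti)$ restricted to $\o^{\exter}$ is harmonic, vanishes at infinity (single layer potentials do for $d\geq3$), has $L^2$-bounded maximal function, and has constant boundary trace. I would therefore split $\tro f$ into its mean over $\bo$ plus the complementary part: the complementary part is matched by $-\DLay(\tfrac12-\dlay)^{-1}(\tro f-\langle\tro f,1\rangle)$, where $(\tfrac12-\dlay)^{-1}$ is the inverse provided by Lemma~\ref{lem:invertable double layer Potential} on the relevant (codimension-one) complement of the constants---as in Remark~\ref{rem:null-space-of-K-S}---and whose boundary trace is the mean-zero part by \eqref{eq:limit for the double layer poential outside}; the constant part is matched by the appropriate multiple of $\SLay(1-\consti)$, namely the second term displayed in \eqref{eq:outer potential with B}. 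Adding the two contributions, noting their maximal functions are in $L^2$ and they vanish at infinity, and invoking the uniqueness quoted above gives \eqref{eq:outer potential with B}.

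Finally, \eqref{eq:outer potential with B} yields $\mathcal{N}\left[\poto\right]=\mathcal{N}\left[\tro\right]$ at once: if $\tro f=0$ then both terms on the right-hand side vanish, so $\poto f\equiv0$; conversely, if $\poto f\equiv0$ then its nontangential limit $\tro f$ vanishes a.e.\ on $\bo$. Combined with the elementary second equality, this proves \eqref{eq:null space of outer potential}. The step I expect to be the main obstacle is the constant-mode bookkeeping of the previous paragraph: identifying the correct codimension-one complement of the constants on which $\tfrac12-\dlay$ is invertible, checking that $\SLay(1-\consti)|_{\o^{\exter}}$ really is the (unique, decaying) exterior harmonic extension of a constant with the required maximal-function bound, and pinning down the scalar in front of it; once this is set up, the rest runs parallel to Theorem~\ref{thm:npi}.
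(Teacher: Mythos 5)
Your argument is correct and follows essentially the same route as the paper: compute the nontangential trace of $\poto f$ from $\o^{\exter}$ via \eqref{eq:limit for the double layer poential outside} and Lemma \ref{gradTP} to get $\tro f$, check that the right-hand side of \eqref{eq:outer potential with B} is harmonic with $L^2$-bounded maximal function and the same trace (with the constant mode carried by $\SLay(1-\consti)$, exactly as in the paper), and conclude by uniqueness of the exterior Dirichlet problem; the null-space identity then falls out. Your explicit remarks on decay at infinity and on $\DLay 1\equiv 0$ in $\o^{\exter}$ are details the paper leaves implicit, not a deviation in method.
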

\begin{proof}
From (\ref{eq:gradient of SLP}) we know that $\slay(1-\consti)$
is a non-zero constant, hence the second term in (\ref{eq:outer potential with B})
is well defined and 
converges to $\left\langle \tro f,1\right\rangle $ a.e. as $\X$ tends to
the boundary nontangentially, by (\ref{eq:boundary single layer potential}). Consequently, the right hand side of (\ref{eq:outer potential with B})
converges to $\tro f$ under the same conditions. Taking nontangential limits
on $\bo$ from $\Omega^o$ on
both sides of (\ref{eq:outer potential with B}) while  using (\ref{eq:limit of grad-S-star})
and (\ref{eq:limit for the double layer poential outside}), we get that
\begin{equation}
\lim_{\X\to\Q}\poto f\left(\X\right)=-\left(\frac{1}{2}-\dlay\right)f_{\eta}\left(\Q\right)+\left(\nablat\slay\right)^{\star}f_{\tcomp}\left(\Q\right)=\tro f\left(\Q\right).\label{eq:limit of the outer potential}
\end{equation}
It follows that both hand sides of (\ref{eq:outer potential with B}) are
harmonic functions on $\o^{\exter}$ with $L^2$-bounded nontangential maximal function,
whose nontangential limits coincide a.e. on $\bo$. So, by  uniqueness of a solution
to the Dirichlet problem,
they are equal. It follows that $\mathcal{N}\left[\poto\right]=\mathcal{N}\left[\tro\right]$; the second equality in (\ref{eq:null space of outer potential})
is immediate from the definition of $\tro$. 
\end{proof}
\begin{cor}
Let $f=\eta f_{\eta}+f_{\tcomp}$
be in $\mathcal{N}\left[\poto\right]$.
Then, $f_{\eta}$ determines $f_{\tcomp}$ uniquely, up to a divergence-free tangent field. 
\end{cor}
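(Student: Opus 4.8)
This statement is the outer counterpart of Corollary~\ref{cor:tangent part deteremind by the normal}, and the plan is to repeat that proof verbatim with $\frac{1}{2}+\dlay$ replaced by $\frac{1}{2}-\dlay$. By Theorem~\ref{thm:null space for PO}, a field $f=\eta f_\eta+f_\tcomp\in\mathcal{N}[\poto]$ satisfies
\[
\left(\frac{1}{2}-\dlay\right)f_\eta=\left(\nablat\slay\right)^{\star}f_\tcomp .
\]
Since $\left(\nablat\slay\right)^{\star}$ annihilates $\divf$ (Definition~\ref{def:tangent spaces} and Remark~\ref{rem:weak divergence}), replacing $f_\tcomp$ by its gradient part $\projg f_\tcomp$ leaves this relation unchanged, so it suffices to determine $\projg f_\tcomp$ from $f_\eta$; equivalently, we may assume without loss of generality that $f_\tcomp$ is a gradient field.

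Following the proof of Corollary~\ref{cor:tangent part deteremind by the normal}, I would write $f_\tcomp=\nablat\slay(c+\varphi)$ with $c\in\mathbb{R}$ and $\varphi\in\ltz$ chosen so that $\nablat\slay\varphi=\projz A\projz f_\tcomp\in\tangz$ (legitimate because $f_\tcomp$ and $\projs f_\tcomp$ are both gradients, $A$ being the operator appearing there). Substituting into the null-space relation and isolating the zero-mean term gives
\[
\left(\nablat\slay\right)^{\star}\nablat\slay\,\varphi=\left(\frac{1}{2}-\dlay\right)f_\eta-c\left(\nablat\slay\right)^{\star}\nablat\slay\,1 ,
\]
whose left-hand side lies in $\ltz$, so the right-hand side has zero mean. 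If $\nablat\slay 1=0$ then $\consti=0$ by Lemma~\ref{lem:S1 constant}, hence $\left(\frac{1}{2}-\dlay^{\star}\right)1=0$, the right-hand side automatically has zero mean, and $\varphi$ --- thus $f_\tcomp=\nablat\slay\varphi$ --- is recovered from $f_\eta$ by $\left(\restri\left(\nablat\slay\right)^{\star}\nablat\slay\right)^{-1}$ (Lemma~\ref{lem:grad SLP invertable}). Otherwise $\langle\left(\nablat\slay\right)^{\star}\nablat\slay 1,1\rangle=\|\nablat\slay 1\|^2\neq 0$, and the zero-mean condition forces
\[
c=\frac{1}{\|\nablat\slay 1\|^2}\left\langle\left(\frac{1}{2}-\dlay\right)f_\eta,1\right\rangle ;
\]
applying $\left(\restri\left(\nablat\slay\right)^{\star}\nablat\slay\right)^{-1}$ then yields $\varphi$ in terms of $f_\eta$. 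In every case $c$ and $\varphi$, hence $f_\tcomp$ up to a divergence-free tangent field, are determined by $f_\eta$.

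The only delicate point --- exactly as in the inner case --- is that $\left(\frac{1}{2}-\dlay\right)f_\eta$ need not have zero mean (here because $\left(\frac{1}{2}-\dlay^{\star}\right)1=\left(\frac{1}{2}-\dlay^{\star}\right)\consti$ may be nonzero), so one cannot invert $\left(\nablat\slay\right)^{\star}\nablat\slay$ directly on $\lt$; peeling off the constant mode $\nablat\slay c$ and fixing $c$ through the compatibility (zero-mean) condition is what makes the argument close. I expect no further obstacle, the rest being bookkeeping carried over from Corollary~\ref{cor:tangent part deteremind by the normal}.
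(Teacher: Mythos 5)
Your proof is correct and is precisely what the paper intends: the paper's own proof of this corollary is the one-line remark that it ``follows the same steps'' as Corollary \ref{cor:tangent part deteremind by the normal}, and you have carried those steps out with the right sign bookkeeping (no minus sign in the outer null-space relation $\left(\frac{1}{2}-\dlay\right)f_{\eta}=\left(\nablat\slay\right)^{\star}f_{\tcomp}$, hence the flipped sign in $c=\|\nablat\slay 1\|^{-2}\left\langle\left(\frac{1}{2}-\dlay\right)f_{\eta},1\right\rangle$). The treatment of the constant mode and the two cases $\nablat\slay 1=0$ versus $\nablat\slay 1\neq 0$ matches the inner argument exactly.
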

\begin{proof}
The proof follows the same steps as the one  of Corollary \ref{cor:tangent part deteremind by the normal}.
\end{proof}
Like in the previous section, the $L^{2}$-adjoint of $\tro$, denoted as $\troc$, helps one to characterize the orthogonal complement of the null space for $\poto$: 
\begin{lem} \label{lem:adjoint of BO}
The operator $\troc \colon \lt\to\ltd d$ can be expressed as
\begin{equation}
\troc g=-\eta\left(\frac{1}{2}-\dlay^{\star}\right)g+\nablat\slay g.
\end{equation}
The range of $\troc$ satisfies $\mathcal{R}\left[\troc\right]=\hardyp=\mathcal{N}\left[\poto\right]^{\perp}$.
\end{lem}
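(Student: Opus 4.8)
The plan is to follow the proof of Lemma~\ref{lem:adjoint of BI}, replacing the exterior cone by the interior one, the only genuinely new point being the bookkeeping of the constant mode. First I would compute the adjoint directly. For $f=\eta f_{\eta}+f_{\tcomp}\in\ltd d$ and $g\in\lt$, using that $\dlay^{\star}$ is the adjoint of $\dlay$, that $\left(\nablat\slay\right)^{\star}$ is the adjoint of $\nablat\slay$, and the orthogonal splitting of $f$ into its normal and tangential parts, one reads off
\[
\langle\tro f,g\rangle=-\langle f_{\eta},(\tfrac{1}{2}-\dlay^{\star})g\rangle+\langle f_{\tcomp},\nablat\slay g\rangle=\langle f,-\eta(\tfrac{1}{2}-\dlay^{\star})g+\nablat\slay g\rangle ,
\]
so that $\troc g=-\eta\left(\tfrac{1}{2}-\dlay^{\star}\right)g+\nablat\slay g$.

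Next I would identify $\mathcal{R}[\troc]$ with $\hardyp$. By \eqref{eq:limit of *double layer potential from inside} the normal component $-\left(\tfrac{1}{2}-\dlay^{\star}\right)g$ is the nontangential limit from inside $\o$ of $\langle\eta,\nabla\SLay g\rangle$, while by \eqref{eq:limit of SLP} the tangential component $\nablat\slay g$ is the (two-sided) tangential nontangential limit of $\nabla\SLay g$; hence $\troc g(\P)=\lim_{\conlimi}\nabla\SLay g(\X)$. Since $\SLay g$ is harmonic on $\o$ with $L^{2}$-bounded nontangential maximal function of its gradient, this gives $\mathcal{R}[\troc]\subseteq\hardyp$. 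Moreover $\troc$ is bounded below on $\ltz$, because there $\|\troc g\|^{2}\geq\|\left(\tfrac{1}{2}-\dlay^{\star}\right)g\|^{2}\geq C\|g\|^{2}$ by Lemma~\ref{lem:invertable double layer Potential}; in particular $\mathcal{R}[\troc]$ is closed, so that $\hardyp$ is a closed subspace of $\ltd d$.

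For surjectivity onto $\hardyp$, take $f\in\hardyp$, written as the interior nontangential limit of $\nabla\varphi$ for some $\varphi$ harmonic on $\o$ with $\|(\nabla\varphi)^{M}_{\inter}\|<\infty$. Here is the one point where the outer case differs from the inner one: the normal component $f_{\eta}$ equals the nontangential normal derivative of $\varphi$, which has zero mean on $\bo$ by Green's identity for harmonic functions with $L^{2}$ maximal function of the gradient; thus $f_{\eta}\in\ltz$, and Lemma~\ref{lem:invertable double layer Potential} lets us set $g=-\left(\tfrac{1}{2}-\dlay^{\star}\right)^{-1}f_{\eta}\in\ltz$. By \eqref{eq:limit of *double layer potential from inside}, $\SLay g$ is harmonic on $\o$, has $L^{2}$-bounded nontangential maximal function of its gradient, and has the same interior normal derivative $f_{\eta}$ as $\varphi$; by uniqueness for the interior Neumann problem up to an additive constant, $\nabla\SLay g=\nabla\varphi$ on $\o$, whence $\troc g=f$. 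Therefore $\mathcal{R}[\troc]=\hardyp$.

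Finally, the general Hilbert-space identity $\mathcal{N}[\tro]=\mathcal{R}[\troc]^{\perp}$ yields $\mathcal{N}[\tro]=\hardyp^{\perp}$, and since $\hardyp$ is closed this gives $\mathcal{N}[\tro]^{\perp}=\hardyp$; combining with $\mathcal{N}[\poto]=\mathcal{N}[\tro]$ from Theorem~\ref{thm:null space for PO}, we conclude $\mathcal{N}[\poto]^{\perp}=\hardyp$, as claimed. I expect the only real obstacle to be the zero-mean verification for $f_{\eta}$ when $f\in\hardyp$ — precisely the step that compensates for the fact that $\tfrac{1}{2}-\dlay^{\star}$ is invertible on $\ltz$ only, rather than on all of $\lt$ as in the inner case; everything else is routine.
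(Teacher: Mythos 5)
Your proposal is correct and follows essentially the same route as the paper: compute $\troc$ by duality, recognize $\troc g$ as the interior nontangential limit of $\nabla\SLay g$, and obtain surjectivity by solving the interior Neumann problem with $g=-\left(\tfrac{1}{2}-\dlay^{\star}\right)^{-1}f_{\eta}$, the zero-mean hypothesis on $f_{\eta}$ being supplied by Gauss's theorem exactly as the paper notes. The extra remark on closedness of the range via the lower bound on $\ltz$ is a harmless (and correct) addition, not a different argument.
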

\begin{proof}
Let $f$ be in $\ltd d$ with normal component  $f_{\eta}$, and tangential part $f_{\tcomp}$. Then, for
every $g\in\lt$ we get that
\begin{align}
\left\langle \tro f,g\right\rangle  & =\left\langle -\left(\frac{1}{2}-\dlay\right)f_{\eta}+\left(\nablat\slay\right)^{\star}f_{\tcomp},g\right\rangle \\
 & =\left\langle f_{\eta},-\left(\frac{1}{2}-\dlay^{\star}\right)g\right\rangle +\left\langle f_{\tcomp},\nablat\slay g\vphantom{\left(\frac{1}{2}\right)}\right\rangle =\left\langle f,\troc g\right\rangle.
\end{align}
Hence, using \eqref{eq:limit of *double layer potential from inside}
we obtain:
\begin{align}
\troc g\left(\P\right) & =-\eta\left(\frac{1}{2}-\dlay^{\star}\right)g\left(\P\right)+\nablat\slay g\left(\P\right)=\lim_{\X\to\P}\nabla\SLay g\left(\X\right), \label{eq:Bo-star}
\end{align}
where the limit is taken from inside $\Omega$. The rest of the proof follows the same steps as in Lemma \ref{lem:adjoint of BI}; the only difference is that $\left(\frac{1}{2}-\dlay^{\star}\right)$ is merely invertible on functions with zero mean and thus, we can only define $\varphi = \SLay \left(\frac{1}{2}-\dlay^{\star}\right)f_{\eta}$ if $f_{\eta}$  is in $\ltz$. This last condition, however, is guaranteed by the Gauss theorem when $f_{\eta}$ is the normal component of a
harmonic gradient in $\o$.

\end{proof}
\begin{cor}
The space $\ltd d$ splits into an orthogonal sum as 
\begin{equation}
\ltd d
=\hardyp\oplus\mathcal{N}\left[\poto\right].\label{eq:HpNPo}
\end{equation}
\end{cor}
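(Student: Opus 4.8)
The plan is to obtain \eqref{eq:HpNPo} as an immediate consequence of Lemma \ref{lem:adjoint of BO}, exactly as the analogous inner statement \eqref{eq:HmNPi} followed from Lemma \ref{lem:adjoint of BI}. First I would observe that $\mathcal{N}\left[\poto\right]$ is a closed subspace of $\ltd d$: by Theorem \ref{thm:null space for PO} it equals $\mathcal{N}\left[\tro\right]$, and $\tro$ is a bounded linear operator, so its kernel is closed. Since a Hilbert space is the orthogonal direct sum of any closed subspace and its orthogonal complement, we have $\ltd d = \mathcal{N}\left[\poto\right] \oplus \mathcal{N}\left[\poto\right]^{\perp}$. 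Lemma \ref{lem:adjoint of BO} identifies $\mathcal{N}\left[\poto\right]^{\perp}$ with $\mathcal{R}\left[\troc\right] = \hardyp$, and substituting this into the previous display yields \eqref{eq:HpNPo}.

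There is no real obstacle here, since the substance of the argument has already been carried out in Lemma \ref{lem:adjoint of BO}---in particular in the verification that $\troc$ maps onto $\hardyp$, which used well-posedness of the interior Neumann problem together with the Gauss theorem to supply the zero-mean condition on the normal component. Should one prefer a route that does not explicitly invoke closedness of $\mathcal{N}\left[\poto\right]$, one can instead start from the fact that $\hardyp = \mathcal{N}\left[\poto\right]^{\perp}$ is closed (being an orthogonal complement), take complements once more to get $\hardyp^{\perp} = \left(\mathcal{N}\left[\poto\right]^{\perp}\right)^{\perp} = \overline{\mathcal{N}\left[\poto\right]} = \mathcal{N}\left[\poto\right]$, and then apply the projection theorem to $\hardyp$. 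Either way the proof is a one-line application of the Hilbert-space projection theorem, requiring no new estimates on layer potentials.
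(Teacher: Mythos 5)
Your proof is correct and is exactly the argument the paper intends: the corollary is an immediate consequence of Lemma \ref{lem:adjoint of BO}, which identifies $\hardyp=\mathcal{N}\left[\poto\right]^{\perp}$, combined with the Hilbert-space projection theorem applied to the closed subspace $\mathcal{N}\left[\poto\right]=\mathcal{N}\left[\tro\right]$. No further detail is needed.
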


\subsection{Orthogonal projections for the outer decomposition}\label{sec:outerproj}

Let $\consti$ denote the function from Lemma \ref{lem:S1 constant}.
Define two complementary projections, 
\begin{align}
\projtp f & =\left(\mathds{1}-\projt\right)f, & \projt f & =\begin{cases}
\left\langle f,\troc1\right\rangle \frac{\troc1}{\|\troc1\|^{2}} & \text{if }\,\consti\neq0,\\
0 & \text{if }\,\consti=0,
\end{cases}
\end{align}
and recall that $\restri$ denotes the projection of $\lt$ to $\ltz$.
In this section we prove the following theorem.
\begin{thm}
\label{thm:L2 decomposition out}The operator 
\begin{equation}
\projp=\projt+\projtp\troc\left(\restri\tro\troc\right)^{-1}\tro\projtp\label{eq:P-plus}
\end{equation}
defines an orthogonal projection from $\ltd d$ onto $\hardyp.$

For $\projd$ as in definition \ref{def:tangent spaces}, the operator
\begin{align}
\projc & \doteq\mathds{1}-\projd-\projp
         \label{projO}
\end{align}
is the orthogonal projection from $\ltd d$ onto 
$\iOd=\mathcal{N}[\poto]\cap\mathcal{N}[\poti]^{\perp}$. Moreover,
the space $\ltd d$ splits into an orthogonal direct sum as 
\begin{equation}
\ltd d=\hardyp \oplus \iOd \oplus \divf. \label{eq:HpOD}
\end{equation}
\end{thm}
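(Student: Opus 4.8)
The plan is to follow the architecture of the proof of Theorem~\ref{thm:L2 decomposition in}, the only new ingredient being the careful treatment of the constant mode, which is forced on us because $\frac{1}{2}-\dlay$ and $\frac{1}{2}-\dlay^{\star}$ are invertible on $\ltz$ only. First I would prove the outer analogue of Lemma~\ref{lem:properties of B-star}: the operator $\restri\tro\troc\colon\ltz\to\ltz$ is self-adjoint and invertible. Self-adjointness is immediate, since for $f,g\in\ltz$ the constant part of $\tro\troc f$ pairs to zero against $g$, so that $\langle\restri\tro\troc f,g\rangle=\langle\troc f,\troc g\rangle$; and coercivity follows from $\|\troc f\|^{2}\ge\|(\frac{1}{2}-\dlay^{\star})f\|^{2}\ge C\|f\|^{2}$ for $f\in\ltz$ (Lemma~\ref{lem:invertable double layer Potential}), so Lax--Milgram supplies a bounded inverse. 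I would also record, to make \eqref{eq:P-plus} meaningful, that $\projtp f\perp\troc1$ forces $\langle\tro\projtp f,1\rangle=\langle\projtp f,\troc1\rangle=0$, hence $\tro\projtp f\in\ltz$ and $(\restri\tro\troc)^{-1}\tro\projtp f$ is well defined; and that $\troc1\in\mathcal{R}[\troc]=\hardyp$ by Lemma~\ref{lem:adjoint of BO}, so that $\hardyp$ is invariant under both $\projt$ and $\projtp$. Finally $\projp$ is self-adjoint, because $\projt$, $\projtp$ and $(\restri\tro\troc)^{-1}$ are and $\troc^{\star}=\tro$.

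Next I would show that $\projp$ is a projection with range $\hardyp$. The inclusion $\mathcal{R}[\projp]\subseteq\hardyp$ is clear from the previous observations ($\projt f\in\mathrm{span}\{\troc1\}\subseteq\hardyp$, and $\projtp\troc(\restri\tro\troc)^{-1}\tro\projtp f\in\projtp\hardyp\subseteq\hardyp$). The crucial step is that $\projp h=h$ for every $h\in\hardyp$. Writing $h=\projt h+\projtp h$, it suffices to prove $\projtp\troc(\restri\tro\troc)^{-1}\tro\projtp h=\projtp h$. Since $\projtp h\in\hardyp$ is the inner nontangential limit of a harmonic gradient, its normal component has zero mean by the Gauss theorem, so exactly as in the proof of Lemma~\ref{lem:adjoint of BO} one may write $\projtp h=\troc g'$ with $g'=-(\frac{1}{2}-\dlay^{\star})^{-1}(\projtp h)_{\eta}\in\ltz$. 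Then $\langle\tro\troc g',1\rangle=\langle\troc g',\troc1\rangle=\langle\projtp h,\troc1\rangle=0$, so $\tro\troc g'=\restri\tro\troc g'$ and, $g'$ being in $\ltz$, $(\restri\tro\troc)^{-1}\tro\troc g'=g'$; hence the left-hand side equals $\projtp\troc g'=\projtp h$, as wanted. It follows that $\projp^{2}=\projp$ and $\mathcal{R}[\projp]=\hardyp$.

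To upgrade this to the \emph{orthogonal} projection I would identify the kernel as $\mathcal{N}[\projp]=\mathcal{N}[\tro]=\mathcal{N}[\poto]$: if $\tro f=0$ then $f\in\mathcal{N}[\poto]=\hardyp^{\perp}$ (Theorem~\ref{cor:npo}, Lemma~\ref{lem:adjoint of BO}), so $f\perp\troc1$, whence $\projt f=0$, $\projtp f=f$, $\tro\projtp f=0$ and $\projp f=0$; since $\ltd d=\hardyp\oplus\mathcal{N}[\projp]=\hardyp\oplus\mathcal{N}[\tro]$ with $\mathcal{N}[\tro]\subseteq\mathcal{N}[\projp]$, the two kernels coincide, so $\projp$ is the orthogonal projection onto $\hardyp$. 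For the part concerning $\projc$, recall that $\projd$ is the orthogonal projection of $\ltd d$ onto $\divf$ (Lemma~\ref{thm:Hodge decomposition on Lipschitz boundary}, Remark~\ref{rem:tangent projections on fields}). Divergence-free tangent fields lie both in $\mathcal{N}[\tro]$ (from \eqref{eq:definition of B-1} and the definition of $\divf$) and in $\mathcal{N}[\tri]$, so, since $\mathcal{N}[\poto]^{\perp}=\hardyp$ and $\mathcal{N}[\poti]^{\perp}=\hardym$, we get $\divf\perp\hardyp$ and $\divf\perp\hardym$; hence $\projd\projp=\projp\projd=0$, and $\projc=\mathds{1}-\projd-\projp$ is self-adjoint and idempotent, i.e. the orthogonal projection onto $\mathcal{R}[\projc]=(\divf\oplus\hardyp)^{\perp}=\divf^{\perp}\cap\mathcal{N}[\poto]$. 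It remains to see that this equals $\iOd=\mathcal{N}[\poto]\cap\mathcal{N}[\poti]^{\perp}=\mathcal{N}[\poto]\cap\hardym$. By Theorem~\ref{thm:L2 decomposition in} together with $\hardym\perp\divf$ we have $\hardym\oplus\divf=\Iod^{\perp}$, so $\hardym=\Iod^{\perp}\cap\divf^{\perp}$ and $\iOd=\mathcal{N}[\poto]\cap\Iod^{\perp}\cap\divf^{\perp}$; but $\Iod\subseteq\mathcal{N}[\poto]^{\perp}$ by definition, so $\mathcal{N}[\poto]\subseteq\Iod^{\perp}$ and the displayed intersection collapses to $\mathcal{N}[\poto]\cap\divf^{\perp}=\mathcal{R}[\projc]$. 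Hence $\mathcal{R}[\projc]=\iOd$, and the identity $\mathds{1}=\projp+\projc+\projd$ exhibits $\ltd d$ as the orthogonal direct sum $\hardyp\oplus\iOd\oplus\divf$, which is \eqref{eq:HpOD}.

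I expect the step $\projp h=h$ to be the main obstacle: one must be scrupulous about the constant mode, exploiting that each element of $\hardyp$ is a single-layer gradient $\troc g$ with $g$ of zero mean (a consequence of the Gauss theorem applied to the inner harmonic extension) and that $\tro\troc g$ then returns to $\ltz$. The second delicate point is the bookkeeping with orthogonal complements that identifies $\mathcal{R}[\projc]$ with $\iOd$, which leans on the already-established inner decomposition of Theorem~\ref{thm:L2 decomposition in}. Everything else transcribes the inner case essentially verbatim.
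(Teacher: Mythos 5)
Correct, and essentially the paper's own argument: you isolate the same key lemma (invertibility of $\restri\tro\troc$ on $\ltz$ via Lax--Milgram) and handle the constant mode in the same way, the only divergence being that you establish idempotence by checking $\projp h=h$ for $h\in\hardyp$ together with $\mathcal{R}[\projp]\subseteq\hardyp$, where the paper verifies the equivalent operator identities $\projtp C\projtp=C\projtp$ and $CC\projtp=C\projtp$ with $C=\troc\left(\restri\tro\troc\right)^{-1}\tro$. Your identification of $\mathcal{R}[\projc]$ with $\iOd$ through the already-established inner decomposition of Theorem \ref{thm:L2 decomposition in} is likewise a legitimate (and arguably more self-contained) substitute for the paper's appeal to Lemma \ref{lem:exclusively_silent}.
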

Compared with the previous section, the only additional ingredient
needed for the proof of this theorem is the fact that the
operator $\restri\tro\troc$ is invertible. We show this in the following
lemma. 
\begin{lem}
\label{lem:RBB operator}$\restri\tro\troc \colon \ltz\to\ltz$ is self-adjoint
and invertible. 
\end{lem}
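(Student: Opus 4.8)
The plan is to imitate the proof of Lemma \ref{lem:properties of B-star}, replacing the invertibility of $\left(\frac{1}{2}+\dlay^{\star}\right)$ on all of $\lt$ by the invertibility of $\left(\frac{1}{2}-\dlay^{\star}\right)$ on $\ltz$, and using the lower bound \eqref{eq:lower bound for tangent gradient of SLP} to control the tangential term on the zero-mean subspace. First I would check self-adjointness: for $f,g\in\ltz$ the operator $\restri$ acts as the identity after $\tro\troc$ lands in $\ltz$, so $\langle\restri\tro\troc f,g\rangle=\langle\tro\troc f,g\rangle=\langle\troc f,\troc g\rangle=\langle f,\restri\tro\troc g\rangle$, which is symmetric. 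Hence by the Riesz lemma the form $M(f,g)\doteq\langle\restri\tro\troc f,g\rangle$ on $\ltz\times\ltz$ is symmetric, bilinear and bounded.

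Next I would establish coercivity of $M$ on $\ltz$. From Lemma \ref{lem:adjoint of BO}, for $f\in\lt$ we have $\troc f=-\eta\left(\frac{1}{2}-\dlay^{\star}\right)f+\nablat\slay f$, and since the normal and tangential parts are orthogonal in $\ltd d$,
\begin{equation}
M(f,f)=\|\troc f\|^{2}=\left\|\left(\tfrac{1}{2}-\dlay^{\star}\right)f\right\|^{2}+\|\nablat\slay f\|^{2}.
\end{equation}
For $f\in\ltz$, Lemma \ref{lem:invertable double layer Potential} gives a constant $c_{1}>0$ with $\left\|\left(\frac{1}{2}-\dlay^{\star}\right)f\right\|\geq c_{1}\|f\|$, so already the first term yields $M(f,f)\geq c_{1}^{2}\|f\|^{2}$ and $M$ is coercive on $\ltz$. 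By the Lax--Milgram theorem, $\restri\tro\troc$ has a bounded inverse on $\ltz$.

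I expect the only genuine subtlety to be a bookkeeping point rather than a deep obstacle: one must confirm that $\restri\tro\troc$ really does map $\ltz$ into $\ltz$ so that the form $M$ is well-defined on $\ltz\times\ltz$ and the cut-off $\restri$ is consistent with the computation of the adjoint. This follows because $\left(\frac{1}{2}-\dlay^{\star}\right)$ maps $\lt$ into $\ltz$ by \cite[thm. 3.3]{ver84} (equivalently, by the Gauss theorem applied to the single layer potential), so the normal component of $\tro\troc f$ has zero mean; the tangential term $\left(\nablat\slay\right)^{\star}\nablat\slay f$ is handled exactly as in the discussion preceding \eqref{eq:projection PD}, using \eqref{eq:gradient of SLP} to see it pairs to zero against the constant function when $f\in\ltz$. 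With these verifications in place, self-adjointness and coercivity are both immediate, and the lemma follows.
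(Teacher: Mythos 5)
Your proof is correct and follows essentially the same route as the paper: self-adjointness by the same inner-product computation, then coercivity of the form $M(f,f)=\|\troc f\|^{2}\geq\|(\tfrac{1}{2}-\dlay^{\star})f\|^{2}\geq C\|f\|^{2}$ on $\ltz$ via Lemma \ref{lem:invertable double layer Potential}, and Lax--Milgram. The extra remarks (the orthogonal splitting of $\|\troc f\|^{2}$ and the range check, which is automatic since $\restri$ is by definition the projection onto $\ltz$) are harmless but not needed.
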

\begin{proof}
The operator $\restri\tro\troc$ is self-adjoint, for if $f,g\in\ltz$
we have
\begin{equation}
\left\langle \restri\tro\troc f,g\right\rangle =\left\langle \tro\troc f,g\right\rangle =\left\langle f,\tro\troc g\right\rangle =\left\langle f,\restri\tro\troc g\right\rangle.
\end{equation}
Hence, it defines a symmetric bounded functional, $M$, such that
for $f,g\in\ltz$,
\begin{equation}
M\left(f,g\right)=\left\langle \restri\tro\troc f,g\right\rangle.
\end{equation}
Since $\frac{1}{2}-\dlay^{\star}$ is invertible on $\ltz$ by Lemma \ref{lem:invertable double layer Potential}, there
exists a constant $C$ such that
\begin{equation}
M\left(f,f\right)=\|\troc f\|^{2}\geq\left\|\left(\frac{1}{2}-\dlay^{\star}\right)f\right\|^{2}\geq C\|f\|^{2}.
\end{equation}
By the Lax-Milgram theorem, $\restri\tro\troc$ has a bounded inverse. 
\end{proof}
\begin{proof}[Proof of Theorem \ref{thm:L2 decomposition out}]
With $\projd$ as above, write the operator $\projp$ from
(\ref{eq:P-plus}) as
\begin{align*}
\projp & =\projt+\projtp C\projtp, & \text{with }\quad C & =\troc\left(\restri\tro\troc\right)^{-1}\tro,
\end{align*}
and define $\projc$ through \eqref{projO}. We
only show that $\projp$ is the orthogonal
projection from $\ltd d$ onto $\hardyp$. The rest of the proof follows
the same steps as the one of Theorem \ref{thm:L2 decomposition in}. 

The operator $\projp$ is self-adjoint, since $\projt$ and $\projtp$
clearly are, and $\left(\restri\tro\troc\right)^{-1}$ is self-adjoint
by Lemma \ref{lem:RBB operator}. Further, 
\begin{equation}
\projp\projp=\projt+\projtp C\projtp C\projtp.\label{eq:P-plus idempotence}
\end{equation}
The idempotence will follow, once we establish that $C\projtp C\projtp=C\projtp$
.

First we show that $\projtp C\projtp=C\projtp$. To see this, observe from Lemma \ref{lem:S1 constant} and the definition of $\troc$ that
$\troc1=\troc\consti$. For $f\in\ltd d$, we then get from the definition of $C$ that
\begin{equation}
\left\langle C\projtp f,\troc1\right\rangle 
=\left\langle C\projtp f,\troc\consti\right\rangle 
=\left\langle \projtp f,\troc\consti\right\rangle
=\left\langle \projtp f,\troc1\right\rangle 
=0.\label{eq:orthogonalty of C}
\end{equation}
Thus, $\projt C\projtp f=0$ and $\projtp C\projtp f=\left(\mathds{1}-\projt\right)C\projtp f=C\projtp f$. 

Next, we prove that $CC\projtp=C\projtp$. For this, observe that
(\ref{eq:orthogonalty of C}) yields $\left\langle \tro C\projtp f,1\right\rangle =0,$
so that $\tro C\projtp f$ has zero mean. Hence, 
\begin{equation}
CC\projtp=\troc\left(\restri\tro\troc\right)^{-1}\left(\restri\tro\troc\right)\left(\restri\tro\troc\right)^{-1}\tro\projtp=C\projtp.
\end{equation}
Consequently, $C\projtp C\projtp=C\projtp$, and (\ref{eq:P-plus idempotence})
says that $\projp\projp=\projp$. 

As to the range of $\projp$, it is easy to see that $\mathcal{N}\left[\projp\right]=\mathcal{N}\left[\tro\right]$, and from Theorem
\ref{thm:null space for PO} we get $\mathcal{N}\left[\tro\right]=\mathcal{N}\left[\poto\right]$.
Thus, by Lemma \ref{lem:adjoint of BO}, we arrive at $\mathcal{R}\left[\projp\right]=\mathcal{N}\left[\poto\right]^{\perp}=\hardyp.$
\end{proof}

\section{Skew-orthogonal decompositions of fields}\label{sec:skew}

In this section we prove two skew-orthogonal decompositions of $\ltd d$.
The first is the Hardy-Hodge decomposition of $\ltd d$. The second
is the splitting of $\ltd d$ into the spaces $\Iod$, $\iOd$, and $\divf$.

We begin with the Hardy-Hodge decomposition, which is a simple consequence
of the above analysis. 
\begin{thm}
\label{thm:hardy-hodge decomposition}The space $\ltd d$ splits into
a not necessarily orthogonal direct sum as 
\begin{align}
\ltd d & =\hardyp+\hardym+\divf.\label{eq:HH}
\end{align}
\end{thm}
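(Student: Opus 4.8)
The plan is to obtain \eqref{eq:HH} as a corollary of the two orthogonal decompositions already established (Theorems~\ref{thm:L2 decomposition in} and \ref{thm:L2 decomposition out}), together with the explicit descriptions of the adjoint boundary operators in Lemmas~\ref{lem:adjoint of BI} and \ref{lem:adjoint of BO}. First I would check that the three spaces span $\ltd d$: since $\mathcal{N}[\poti]^{\perp}=\hardym$ by Lemma~\ref{lem:adjoint of BI}, the space $\iOd=\mathcal{N}[\poto]\cap\mathcal{N}[\poti]^{\perp}$ is contained in $\hardym$, so Theorem~\ref{thm:L2 decomposition out} gives
\[
\ltd d=\hardyp\oplus\iOd\oplus\divf\ \subseteq\ \hardyp+\hardym+\divf\ \subseteq\ \ltd d ,
\]
whence $\ltd d=\hardyp+\hardym+\divf$. (Symmetrically one could use $\Iod\subseteq\hardyp$ and Theorem~\ref{thm:L2 decomposition in}.)

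Next I would prove directness. By the standard criterion for a direct sum of three subspaces it is enough to show $\hardym\cap\divf=\{0\}$ and $\hardyp\cap(\hardym+\divf)=\{0\}$. The first is immediate: any $d\in\divf$ is tangential with $(\nablat\slay)^{\star}d=0$, hence $\tri d=0$, so $d\in\mathcal{N}[\tri]=\mathcal{N}[\poti]=\hardym^{\perp}$ by Theorem~\ref{thm:npi} and Lemma~\ref{lem:adjoint of BI}, and therefore $\hardym\cap\divf\subseteq\hardym\cap\hardym^{\perp}=\{0\}$. For the second, I would observe that the tangential part of \emph{any} Hardy field is a tangential gradient: from $\troc g=-\eta(\tfrac12-\dlay^{\star})g+\nablat\slay g$ and $\tric g=\eta(\tfrac12+\dlay^{\star})g+\nablat\slay g$ (and since $\SLay$ is onto $\sob$) one has $\hardyp,\hardym\subseteq\eta\lt\oplus\grad$, so both Hardy spaces are orthogonal to $\divf=\grad^{\perp}$. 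Thus if $h_{+}=h_{-}+d$ with $h_{\pm}$ in the respective Hardy spaces and $d\in\divf$, pairing with $d$ and using $h_{+}\perp d$ and $h_{-}\perp d$ forces $\|d\|^{2}=0$; so $d=0$ and the claim reduces to $\hardyp\cap\hardym=\{0\}$.

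To see $\hardyp\cap\hardym=\{0\}$, write a common element as $\troc g=\tric w$ with $g,w\in\lt$ (Lemmas~\ref{lem:adjoint of BO}, \ref{lem:adjoint of BI}). Equating tangential components gives $\nablat\slay(g-w)=0$, so $g-w\in\mathcal{N}[\nablat\slay]=\mathbb{R}(1-\consti)$ by Remark~\ref{rem:null-space-of-K-S} and Lemma~\ref{lem:S1 constant}; write $w=g-c(1-\consti)$. Equating normal components gives $-(\tfrac12-\dlay^{\star})g=(\tfrac12+\dlay^{\star})w$; inserting $w$ and using $(\tfrac12-\dlay^{\star})(1-\consti)=0$ (so that $(\tfrac12+\dlay^{\star})(1-\consti)=1-\consti$), the $c$-terms collapse to the one constant $c(1-\consti)$ and one is left with $g=(\tfrac12+\dlay^{\star})g+(\tfrac12-\dlay^{\star})g=c(1-\consti)$; hence $\troc g=c\,\troc(1-\consti)=0$, again by Lemma~\ref{lem:S1 constant} (both $(\tfrac12-\dlay^{\star})(1-\consti)=0$ and $\nablat\slay(1-\consti)=0$). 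Thus the common element is zero and the sum \eqref{eq:HH} is direct, which completes the proof.

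There is no deep difficulty here; the one point that requires care is the bookkeeping around the one-dimensional kernel $\mathbb{R}(1-\consti)$ of $\nablat\slay$ — equivalently, the fact that $\tfrac12\pm\dlay^{\star}$ are invertible only on zero-mean functions — which is why the function $1-\consti$ from Lemma~\ref{lem:S1 constant} has to be tracked through the last step. If one prefers, the reduction in the second paragraph can be skipped and $\hardyp\cap(\hardym+\divf)$ treated head-on by the same computation carrying an extra tangential term $d$, which the argument then shows is a tangential gradient lying in $\divf=\grad^{\perp}$, hence zero.
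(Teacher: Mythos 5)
Your proof is correct, but it takes a partly different route from the paper's. For the existence of the decomposition, the paper constructs the Hardy components explicitly: it writes $\projDp f=\eta f_{\eta}+\nablat\slay\varphi$ and exhibits $h_{+}=-\troc\bigl(f_{\eta}-(\tfrac12+\dlay^{\star})\varphi\bigr)$ and $h_{-}=\tric\bigl(f_{\eta}+(\tfrac12-\dlay^{\star})\varphi\bigr)$, using the identities $(\tric-\troc)\varphi=\eta\varphi$ and $\tfrac12(\tric+\troc)\varphi=\eta\dlay^{\star}\varphi+\nablat\slay\varphi$; this yields concrete layer-potential formulas for the Hardy parts, in keeping with the paper's emphasis on explicit projections. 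You instead deduce spanning abstractly from the already-proved orthogonal decomposition $\ltd d=\hardyp\oplus\iOd\oplus\divf$ together with the inclusion $\iOd\subseteq\mathcal{N}[\poti]^{\perp}=\hardym$ — shorter, non-constructive, and legitimate since Theorem \ref{thm:L2 decomposition out} (and the forward-referenced Lemma \ref{lem:exclusively_silent} it relies on) is established independently of the present theorem. For directness, both arguments hinge on $\hardyp\cap\hardym=\{0\}$ via the coincidence of $\mathcal{N}[\nablat\slay]$ with $\mathcal{N}[\tfrac12-\dlay^{\star}]=\mathbb{R}(1-\consti)$ (Remark \ref{rem:null-space-of-K-S}), and your bookkeeping with $1-\consti$ reaches the same conclusion as the paper's substitution argument. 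A point in your favor: you make explicit that the tangential part of every field in $\mathcal{R}[\tric]\cup\mathcal{R}[\troc]$ is a tangential gradient, hence both Hardy spaces are orthogonal to $\divf$; the paper's uniqueness step tacitly assumes that the divergence-free component of any competing decomposition must equal $\projD f$, which is exactly this observation, so your write-up closes a small gap the paper leaves to the reader.
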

\begin{proof}
Note from the definition of $\tric$ and $\troc$ that for
every $\varphi\in\lt$: 
\begin{align}
\left(\tric-\troc\right)\varphi & =\eta\varphi & \frac{1}{2}\left(\tric+\troc\right)\varphi & =\eta\dlay^{\star}\varphi+\nablat\slay\varphi.\label{eq:Hardy-Hodge decomposition}
\end{align}
Now, let $f$ be in $\ltd d$. We use the Hodge decomposition from
Lemma \ref{thm:Hodge decomposition on Lipschitz boundary} and write 
$f=\projDp f+\projD f$, with $\projDp f = \eta f_{\eta} + \projg f_{\tcomp}$. Since $\projg f_{\tcomp}$ is a gradient field, there exists a function $\varphi\in\lt$
such that $\projDp f=\eta f_{\eta}+\nablat\slay\varphi$. Choosing
\begin{align}
h_{+} & =-\troc\left(f_{\eta}-\left(\frac{1}{2}+\dlay^{\star}\right)\varphi\right)\in\hardyp, \label{eq:hp}\\
h_{-} & =\hphantom{-}\tric\left(f_{\eta}+\left(\frac{1}{2}-\dlay^{\star}\right)\varphi\right)\in\hardym, \label{eq:hm}
\end{align}
and using (\ref{eq:Hardy-Hodge decomposition}), we obtain
\begin{multline*}
h_{+}+h_{-}=\left(\tric-\troc\right)\left(f_{\eta}-\dlay^{\star}\varphi\right)+\frac{1}{2}\left(\tric+\troc\right)\varphi=\eta f_{\eta}+\nablat\slay\varphi=\projDp f.
\end{multline*}
The desired decomposition is then given by $f= h_{+}+h_{-}+\projD f$. 

For uniqueness, we first show  that $\hardym\cap\hardyp=\left\{ 0\right\} $.
To see this, assume that $g$ is in $\hardym\cap\hardyp$. Then, there
exist two functions $\phi,\chi\in\lt$ such that,
\begin{equation}
\tric\phi=\troc\chi= g.
\end{equation}
The normal and the tangent components of that equation read
\begin{align}
\left(\frac{1}{2}+K^{\star}\right)\phi & =-\left(\frac{1}{2}-K^{\star}\right)\chi & \text{and }\quad\nablat\slay\phi & =\nablat\slay\chi.
\end{align}
The equation for the tangent part says that $\phi$ differs from $\chi$
by the null space of $\nablat\slay$. By Remark \ref{rem:null-space-of-K-S},
the null spaces of $\left(\frac{1}{2}-K^{\star}\right)$ and $\nablat\slay$
are the same and thus the equation for the normal component reads
\begin{equation}
\left(\frac{1}{2}+\dlay^{\star}\right)\phi=-\left(\frac{1}{2}-K^{\star}\right)\phi,
\end{equation}
leading to $\phi=0$. It follows that $g=\tri\phi=0.$

Now, assume that $\projDp f= g_{-}+g_{+}$, for some $g_{-}\in\hardym$
and $g_{+}\in\hardyp$, then we have that
\begin{align*}
\projDp f = h_{+}+h_{-} & = g_{+}+g_{-}.
\end{align*}
Hence, the two functions $g_{-}-h_{-}$ and $g_{+}-h_{+}$ are in
$\hardym\cap\hardyp$, and thus, $h_{-}=g_{-}$ and $h_{+}=g_{+}$.
\end{proof}
Next, we prove the skew-orthogonal decomposition involving $\Iod$ and $\iOd$.

\begin{lem}\label{lem:exclusively_silent}
It holds that $\mathcal{N}[\poti] \cap \mathcal{N}[\poto]=\divf$.
\end{lem}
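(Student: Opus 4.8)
The inclusion $\divf \subseteq \mathcal{N}[\poti]\cap\mathcal{N}[\poto]$ is the easy direction: if $f=f_{\tcomp}$ is a divergence-free tangent field then $f_\eta=0$ and $(\nablat\slay)^\star f_{\tcomp}=0$, so both null-space conditions \eqref{eq:null space of inner potential} and \eqref{eq:null space of outer potential} are satisfied trivially (the right-hand side vanishes in the first, and $\left(\frac12-\dlay\right)0 = 0 = (\nablat\slay)^\star f_{\tcomp}$ in the second). So the content is the reverse inclusion.

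For the reverse inclusion, the plan is to take $f=\eta f_\eta + f_{\tcomp}$ lying in both null spaces and subtract the two defining equations. From \eqref{eq:null space of inner potential} we have $\left(\frac12+\dlay\right)f_\eta = -(\nablat\slay)^\star f_{\tcomp}$, and from \eqref{eq:null space of outer potential} we have $\left(\frac12-\dlay\right)f_\eta = (\nablat\slay)^\star f_{\tcomp}$. Adding these gives $f_\eta = 0$, so $f=f_{\tcomp}$ is tangent; and then substituting back, either equation forces $(\nablat\slay)^\star f_{\tcomp} = 0$, which is exactly the statement $f_{\tcomp}\in\divf$ by the definition of $\divf$ in Definition \ref{def:tangent spaces}. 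An alternative, more conceptual route is to argue directly from potentials: if $\poti f\equiv 0$ on $\o$ and $\poto f\equiv 0$ on $\o^\exter$, then $\pot f$ vanishes identically on $\mathbb{R}^d\setminus\bo$; one then recalls that $\pot f$ is a constant multiple of the single layer potential $\SLay(\operatorname{div} f)$ of the surface divergence of $f$, whose gradient jump across $\bo$ recovers $f_\eta$ (up to a constant) via the jump relations \eqref{eq:limit of *double layer potential from inside}--\eqref{eq:limit for *double layer potential from outside}, forcing $f_\eta=0$, and whose tangential limit \eqref{eq:limit of grad-S-star} recovers $(\nablat\slay)^\star f_{\tcomp}$, forcing that to vanish too.

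I expect the algebraic route (subtracting the two equations) to be essentially immediate and to be the one worth writing; there is no real obstacle. The only point requiring a half-sentence of care is that the two characterizations \eqref{eq:null space of inner potential} and \eqref{eq:null space of outer potential} are stated with genuinely different operators $\left(\frac12\pm\dlay\right)$ on the normal component, so one should note that their \emph{sum} is the identity on $f_\eta$, which is what makes the subtraction collapse to $f_\eta=0$ with no invertibility hypotheses needed. After that, $\divf$ is read off from its definition, and the reverse inclusion combined with the trivial forward inclusion gives the claimed equality.
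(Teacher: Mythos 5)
Your proposal is correct and follows essentially the same route as the paper: both identify membership in $\mathcal{N}[\poti]\cap\mathcal{N}[\poto]$ with the vanishing of $\tri f$ and $\tro f$, combine the two null-space equations so that the $\left(\frac{1}{2}\pm\dlay\right)$ terms sum to the identity and force $f_{\eta}=0$, and then conclude $f\in\divf$. The only cosmetic difference is the last step, where you substitute $f_{\eta}=0$ back to read off $\left(\nablat\slay\right)^{\star}f_{\tcomp}=0$ directly, while the paper invokes Corollary \ref{cor:tangent part deteremind by the normal}; these are equivalent, and your explicit check of the easy inclusion $\divf\subseteq\mathcal{N}[\poti]\cap\mathcal{N}[\poto]$ is a welcome addition.
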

\begin{proof}
Let $h$ be in $\mathcal{N}[\poti] \cap \mathcal{N}[\poto]$. Then, by \eqref{eq:null space of inner potential} and \eqref{eq:null space of outer potential} we have that $\tro h=0=\tri h$,
therefore
\begin{equation}
0=\left(\tro+\tri\right)h= h_{\eta}.
\end{equation}
By Corollary \ref{cor:tangent part deteremind by the normal}, the normal component uniquely determines $h$ up to a divergence-free field, and thus, $h\in\divf$.
\end{proof}

\begin{thm}\label{thm:ahh}
The space $\ltd d$ splits into a not-necessarily orthogonal direct sum as 
\begin{align}
\ltd d & =\Iod + \iOd+\divf. \label{eq:IOD}
\end{align}
\end{thm}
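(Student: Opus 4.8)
The plan is to establish \eqref{eq:IOD} by combining the two orthogonal decompositions obtained in Theorems \ref{thm:L2 decomposition in} and \ref{thm:L2 decomposition out} with the identification $\mathcal{N}[\poti]\cap\mathcal{N}[\poto]=\divf$ from Lemma \ref{lem:exclusively_silent}. First I would recall that, by definition, $\Iod=\mathcal{N}[\poti]\cap\mathcal{N}[\poto]^\perp$ and $\iOd=\mathcal{N}[\poto]\cap\mathcal{N}[\poti]^\perp$, and that $\divf\subset\tang$ lies inside both $\mathcal{N}[\poti]$ and $\mathcal{N}[\poto]$ (a divergence-free tangent field $f$ has $f_\eta=0$ and $(\nablat\slay)^\star f_{\tcomp}=0$, so $\tri f=\tro f=0$). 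So the three summands are genuinely defined and the content of the theorem is that their sum is all of $\ltd d$ and is direct.

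\textbf{Surjectivity.} Given $f\in\ltd d$, I would use Theorem \ref{thm:L2 decomposition in} to write $f=u+h_-+w$ uniquely with $u\in\Iod$, $h_-\in\hardym$, $w\in\divf$; here $u=\projv f$, $h_-=\projm f$, $w=\projd f$. The term $w$ is already of the desired type. For $h_-\in\hardym=\mathcal{N}[\poti]^\perp$, I would apply Theorem \ref{thm:L2 decomposition out} (specifically the splitting $\ltd d=\hardyp\oplus\iOd\oplus\divf$, equivalently $\mathcal{N}[\poto]^\perp=\hardyp$ and the fact that $\mathcal{N}[\poto]=\iOd\oplus\divf$) to decompose $h_-=h_-' + v + w'$ with $h_-'\in\hardyp$, $v\in\iOd$, $w'\in\divf$. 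But $\hardyp=\mathcal{R}[\troc]\subset\mathcal{N}[\poto]^\perp$, so this does not obviously land $h_-'$ in $\Iod$; the cleaner route is instead to decompose $h_-$ using the splitting of $\ltd d$ relative to $\mathcal{N}[\poto]$: write $h_- = \projc h_- + (\text{something in }\mathcal{N}[\poto]^\perp) + \projd h_-$. Here is the point: $\mathcal{N}[\poto]^\perp=\hardyp$, and $\hardyp\cap\mathcal{N}[\poti]^\perp$ — since $h_-\in\mathcal{N}[\poti]^\perp$ and $\Iod=\mathcal{N}[\poti]\cap\mathcal{N}[\poto]^\perp$ — I would rather argue as follows. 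Every element of $\hardym$ lies in $\mathcal{N}[\poti]^\perp$; decompose it via $\ltd d=\Iod\oplus\hardyp\oplus\divf$? That is not yet proven. The honest approach: decompose the $\hardym$-component of $f$ using the \emph{outer} orthogonal decomposition $\ltd d=\hardyp\oplus\iOd\oplus\divf$, obtaining $h_-=a+b+c$ with $a\in\hardyp$, $b\in\iOd$, $c\in\divf$. Now observe $a=h_--b-c\in\mathcal{N}[\poti]^\perp$ because $h_-\in\mathcal{N}[\poti]^\perp$, $b\in\iOd\subset\mathcal{N}[\poti]^\perp$, and $c\in\divf\subset\mathcal{N}[\poti]$— wait, $\divf\subset\mathcal{N}[\poti]$, not its orthocomplement, so that reasoning needs care; one must instead note $a\in\hardyp\subset\mathcal{N}[\poto]^\perp$, hence $a$ is silent nowhere unless $a=0$. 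Then: $a=h_--b-c$, and $\langle a,g\rangle$ for $g\in\mathcal{N}[\poti]$: $\langle h_-,g\rangle=0$, $\langle b,g\rangle=0$, but $\langle c,g\rangle$ need not vanish. This shows a straightforward ``decompose twice'' argument has a genuine subtlety, which I expect to be \textbf{the main obstacle}.

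The resolution I would pursue is to first peel off the divergence-free part of $f$ at the very start. Write $f=\projDp f+\projD f$ with $\projD f\in\divf$. It then suffices to decompose $g:=\projDp f=\eta f_\eta+\projg f_{\tcomp}$, i.e. a field whose tangential part is a gradient. For such $g$, apply the inner orthogonal decomposition: $g=u+h_-+w$ with $u\in\Iod$, $h_-\in\hardym$, $w\in\divf$; since $g$ has gradient tangential part and $u,h_-$ do too (being images of $\tric$ resp. $\projv$ applied to such data — this needs a short check using the explicit formulas), one gets $w=0$, so $g=u+h_-$ with $u\in\Iod$, $h_-\in\hardym$. Now decompose $h_-$ with the \emph{outer} orthogonal decomposition; again tracking gradient tangential parts kills the $\divf$-component, giving $h_-=a+b$ with $a\in\hardyp$, $b\in\iOd$. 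Finally decompose $a\in\hardyp$ with the inner orthogonal decomposition: $a=u'+h_-'+0$ with $u'\in\Iod$, $h_-'\in\hardym$. Iterating this alternation produces an operator that I would rather analyze as a fixed point / Neumann series; alternatively, and more cleanly, I would invoke a dimension-counting or closed-range argument: the map $\Iod\times\iOd\times\divf\to\ltd d$ has closed range (each factor is closed, and $\divf$ is orthogonal to the other two inside their respective null spaces), so it suffices to show its range is dense, i.e. that the only $f$ orthogonal to $\Iod+\iOd+\divf$ is $0$; but $f\perp\divf$ and $f\perp\Iod$ with $\Iod\oplus\divf=\mathcal{N}[\poti]$ forces $f\in\mathcal{N}[\poti]^\perp=\hardym$, and symmetrically $f\perp\iOd$, $f\perp\divf$ forces $f\in\mathcal{N}[\poto]^\perp=\hardyp$, so $f\in\hardyp\cap\hardym=\{0\}$ by Theorem \ref{thm:hardy-hodge decomposition}.

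\textbf{Directness.} For uniqueness of the decomposition, suppose $u+v+w=0$ with $u\in\Iod$, $v\in\iOd$, $w\in\divf$. Apply $\poti$: since $u\in\mathcal{N}[\poti]$ and $w\in\divf\subset\mathcal{N}[\poti]$, we get $\poti v=0$, so $v\in\mathcal{N}[\poti]\cap\mathcal{N}[\poto]=\divf$ by Lemma \ref{lem:exclusively_silent}; but $v\in\iOd=\mathcal{N}[\poto]\cap\mathcal{N}[\poti]^\perp$, which meets $\divf$ only in $\{0\}$, so $v=0$. Symmetrically $u=0$ using $\poto$, and then $w=0$. Combined with surjectivity this yields the topological direct sum \eqref{eq:IOD}; the projections are bounded because each summand is closed and the sum is direct (open mapping theorem). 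This completes the proof.
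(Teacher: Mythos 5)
Your uniqueness argument is correct (indeed more explicit than the paper's), and the orthogonal-complement computation at the end is sound as far as it goes: using $\mathcal{N}[\poti]=\Iod\oplus\divf$, $\mathcal{N}[\poto]=\iOd\oplus\divf$ and $\hardyp\cap\hardym=\{0\}$, you correctly show that $(\Iod+\iOd+\divf)^{\perp}=\{0\}$. But this only proves that $\Iod+\iOd+\divf$ is \emph{dense} in $\ltd d$, and here lies a genuine gap: to upgrade density to equality you need the sum to be closed, and your justification (``each factor is closed, and $\divf$ is orthogonal to the other two'') only reduces the question to the closedness of $\Iod+\iOd$. The sum of two closed subspaces with trivial intersection need not be closed in a Hilbert space; it is closed exactly when the minimal angle between them is positive, and for a general Lipschitz $\bo$ the spaces $\Iod$ and $\iOd$ are \emph{not} orthogonal (Corollary \ref{cor:IODortho}), so no such angle bound is available a priori. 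This positive-angle statement is precisely the analytic content of the theorem, and the density argument bypasses rather than establishes it. Your earlier ``decompose twice / iterate'' sketch runs into the same wall: making the alternating projections converge would require a contraction estimate, which again amounts to the missing angle bound.

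The paper closes the gap constructively. Given $f$, it prescribes the normal component $h_{\eta}=\left(\frac{1}{2}-\dlay\right)f_{\eta}-\left(\nablat\slay\right)^{\star}f_{\tcomp}$ and invokes Corollary \ref{cor:tangent part deteremind by the normal} (specifically the explicit inversion \eqref{eq:tangent from normal} in its proof) to manufacture a gradient tangential part $h_{\tcomp}$ so that $h=\eta h_{\eta}+h_{\tcomp}$ lies in $\Iod$; by construction $\tro(f-h)=0$, i.e. $f-h\in\mathcal{N}[\poto]=\iOd\oplus\divf$, which yields the splitting of $f$ directly and in particular shows that the sum is all of $\ltd d$. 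If you wish to retain your argument, you must supplement it with a proof that the projection onto $\Iod$ along $\iOd\oplus\divf$ is everywhere defined and bounded --- which is essentially this construction.
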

\begin{proof}
We will show that for every $f\in\ltd d$ there exists a function $h \in\Iod$
such that $f-h \in\mathcal{N}\left[\poto\right]$, and since $\mathcal{N}\left[\poto\right]=\iOd\oplus\divf$ the decomposition will follow as $f=h+ \projDp(f-h) + \projD(f-h)$.

Recall that a function
$g$ is in $\mathcal{N}\left[\poto\right]$ if and only if $\tro(g)=0$.
By Theorem \ref{thm:null space for PI}, every $h\in\Iod$ satisfies
\begin{equation}
\left(\frac{1}{2}+K\right)h_{\eta}=-\left(\nablat\slay\right)^{\star}h_{\tcomp},
\end{equation}
and thus, we require in view of \eqref{eq:definition of B-1} that 
\begin{equation}
\tro\left(f-h\right)=-\left(\frac{1}{2}-K\right)f_{\eta}+\left(\nablat\slay\right)^{\star}f_{\tcomp}+h_{\eta}=0.
\end{equation}
Choose $h_{\eta}=\left(\frac{1}{2}-K\right)f_{\eta}-\left(\nablat\slay\right)^{\star}f_{\tcomp}$. By the proof of Corollary \ref{cor:tangent part deteremind by the normal}, we can find  $h_{\tcomp}$ such that $h=\eta h_{\eta}+h_{\tcomp}$ is in $\Iod$ and $f-h$ is in $\mathcal{N}\left[\poto\right]$ by construction.

As in the previous theorem,
uniqueness follows since $\Iod\cap\iOd=\left\{ 0\right\}$. 
This concludes the proof.
\end{proof}

\section{Special case---the sphere}\label{sec:sphere}

In this section we show that if $\bo$ is a sphere, then the decompositions in \eqref{eq:HmID}, \eqref{eq:HpOD}, \eqref{eq:HH}, and \eqref{eq:IOD} become equal. To begin, we prove the following theorem. 

\begin{thm}\label{thm:HardyOrto}
The Hardy-spaces are orthogonal if and only if $\bo$ is a sphere.
\end{thm}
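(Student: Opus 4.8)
The plan is to prove both implications. For the "only if" direction, I would assume that the Hardy spaces $\hardyp$ and $\hardym$ are orthogonal and deduce that $\bo$ is a sphere. The natural bridge is the fact, established in the earlier sections, that the boundary operators $\tric$ and $\troc$ have ranges $\hardym$ and $\hardyp$ respectively, and that these operators are built from $\dlay^{\star}$ and $\nablat\slay$. Using the identities in \eqref{eq:Hardy-Hodge decomposition}, namely $(\tric-\troc)\varphi=\eta\varphi$, I would express the orthogonality condition $\langle\tric\phi,\troc\chi\rangle=0$ for all $\phi,\chi\in\lt$ as a relation purely among the operators $\dlay^{\star}$, $\dlay$, $\nablat\slay$ and $(\nablat\slay)^{\star}$. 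Concretely, I expect this to force an algebraic identity such as $K K^{\star}=K^{\star}K$ (normality of the double layer) together with some compatibility with the single-layer terms; for the sphere this is classical (the double layer is self-adjoint there, $K=K^{\star}$), and conversely such an identity should be known to characterize the sphere among Lipschitz domains. I would either invoke this characterization directly or, more self-containedly, extract from the orthogonality relation the statement that $\slay$ intertwines inner and outer data symmetrically, and then appeal to a rigidity/Serrin-type theorem: a Lipschitz domain whose equilibrium measure has constant density (equivalently $\consti=0$) together with the extra symmetry is a ball.

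For the "if" direction, I would assume $\bo$ is a sphere of radius $r$ centered at the origin. Here one can compute everything explicitly. On the sphere the double layer potential operator $K$ is self-adjoint (indeed $K=K^{\star}$, since $\langle q-p,\eta(q)\rangle=\langle q-p,q/r\rangle=(|q|^2-\langle p,q\rangle)/r$ is symmetric in $p,q$ up to the symmetric quantity $|q-p|^2/(2r)$), and in fact on the sphere $\consti=0$, so that $\nablat\slay 1=0$ and $(\tfrac12-K^{\star})1=0$. I would then take $h_-=\tric\phi\in\hardym$ and $h_+=\troc\chi\in\hardyp$ and compute $\langle h_-,h_+\rangle=\langle\tric\phi,\troc\chi\rangle=\langle\phi,\tri\troc\chi\rangle$. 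Expanding $\tri\troc$ using Definition~\ref{def:ibo} and Lemma~\ref{lem:adjoint of BO}, the normal-normal term is $-(\tfrac12+K)(\tfrac12-K^{\star})=-(\tfrac14-K^2)$ using self-adjointness, and the tangent-tangent term is $(\nablat\slay)^{\star}\nablat\slay$; a short computation invoking the jump relations (Lemma with \eqref{eq:limit of *double layer potential from inside}--\eqref{eq:limit for *double layer potential from outside}) and the Rellich-type identity on the sphere should show these cancel, giving $\langle h_-,h_+\rangle=0$. Alternatively, and perhaps more cleanly, I would use the classical description of Hardy spaces on the sphere via spherical harmonics: $\nabla g$ for $g$ harmonic inside expands in solid harmonics $r^n Y_n$ while $g$ harmonic outside vanishing at infinity expands in $r^{-n-1}Y_n$, and the corresponding boundary gradients live in orthogonal subspaces of $\ltd d$ because their radial and tangential parts pair to zero degree by degree (the $n$-th inner and $n$-th outer pieces are mutually orthogonal by direct integration of products of spherical harmonics of different homogeneity). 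This makes $\hardyp\perp\hardym$ transparent.

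The main obstacle I anticipate is the "only if" direction: turning the abstract orthogonality $\hardyp\perp\hardym$ into a rigid geometric conclusion. The cleanest route is probably to show orthogonality forces $K=K^{\star}$ (self-adjointness of the double layer), and then cite the known fact—or prove via a Rellich identity and the resulting Serrin-type overdetermined problem—that a bounded Lipschitz (indeed $C^{1,\alpha}$) domain with self-adjoint double layer potential must be a ball. I would flag that this is where the real work lies, and handle it by reducing to the equilibrium-measure characterization already discussed in the Remark following Lemma~\ref{lem:S1 constant}: orthogonality of the Hardy spaces will in particular imply $\consti=0$ (via $\langle\tric 1,\troc 1\rangle=0$ unwound using $\tric 1=\tric\consti$ and $\troc 1=\troc\consti$), and then combined with the extra symmetry extracted above one concludes $\bo$ is a sphere.
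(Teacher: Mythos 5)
Your proposal follows essentially the same route as the paper: reduce orthogonality of $\hardyp$ and $\hardym$ to self-adjointness of the double layer potential and then invoke the rigidity theorem of Hofmann--Mitrea--Mitrea--Nistor (\cite[thm. 4.23]{mitrea09}) characterizing the sphere, with the converse direction handled by the classical spherical computation. The one point you leave as an ``expectation'' (whether one gets mere normality $\dlay\dlay^{\star}=\dlay^{\star}\dlay$ or genuine self-adjointness) is settled in the paper by a clean device: orthogonality gives the operator identity $\tro\tric=0$, hence $\tri\troc=(\tro\tric)^{\star}=0$ as well, and a direct computation shows $\tro\tric-\tri\troc=\dlay-\dlay^{\star}$, so the quadratic terms $\dlay\dlay^{\star}$ and $\left(\nablat\slay\right)^{\star}\nablat\slay$ cancel and one lands exactly on $\dlay=\dlay^{\star}$ without any detour through $\consti$ or a Serrin-type argument.
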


It is important to keep in mind that $\o$ is bounded, otherwise the statement is not true and a half-space whose boundary is a hyperplane is a counterexample
(although a half-space is arguably a very large ball). The  theorem is known for Clifford-analytic Hardy spaces \cite[thm. 1.1]{mitrea09}, which
comprise Clifford algebra-valued functions. In contrast to this, the present Hardy-spaces are smaller as they can be seen as vector valued Clifford-analytic functions only. Therefore, the above statement requires proof.

\begin{proof}
It is well known that Hardy-spaces on a sphere are orthogonal (for example, \cite{atfeh10}). Thus, we only have to prove the opposite.

Assume that the Hardy-spaces are orthogonal. Then, by Lemma \ref{lem:adjoint of BI} and Lemma \ref{lem:adjoint of BO} we have for every $\varphi,\chi\in\lt$,
\begin{align}
0=\left\langle\tric\varphi,\troc\chi\right\rangle=\left\langle\tro\tric\varphi,\chi\right\rangle,
\end{align}
which implies the operator identity $\tro \tric = 0$. Then, $\tri \troc=\left(\tro\tric\right)^{\star}=0$ and $\tro \tric - \tri\troc=0$. Using the definition of operators $\tri$, $\tro$, and their adjoints the latter identity yields
\begin{equation}
\tro \tric - \tri\troc = \dlay - \dlay^{\star}=0.
\end{equation}
Consequently, the double layer potential must be self-adjoint. This, however, can only happen when $\bo$ is a sphere by \cite[thm. 4.23]{mitrea09}. This concludes the proof.
\end{proof}

\begin{cor}\label{cor:IODortho}
$\Iod$ and $\iOd$ are orthogonal if and only if $\bo$ is a sphere.
\end{cor}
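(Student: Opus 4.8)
The plan is to deduce Corollary \ref{cor:IODortho} from Theorem \ref{thm:HardyOrto} together with the orthogonal decompositions \eqref{eq:HmID} and \eqref{eq:HpOD}. The key observation is that once we know $\Iod$ and $\iOd$, we also control the Hardy spaces: from Theorem \ref{thm:L2 decomposition in} we have $\ltd d = \Iod \oplus \hardym \oplus \divf$ with $\hardym = \mathcal{N}[\poti]^{\perp}$, and from Theorem \ref{thm:L2 decomposition out} we have $\ltd d = \hardyp \oplus \iOd \oplus \divf$ with $\hardyp = \mathcal{N}[\poto]^{\perp}$. In both decompositions $\divf$ already sits orthogonally, so the orthogonality of the remaining pair is what carries the geometric information.

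First I would treat the easy direction: if $\bo$ is a sphere, then by Theorem \ref{thm:HardyOrto} the Hardy spaces are orthogonal, and by the already-cited fact (Corollary \ref{cor:ieh}, or directly the classical spherical identities $\hardyp=\Iod$, $\hardym=\iOd$) one immediately gets $\Iod \perp \iOd$. Alternatively, one can argue without invoking $\hardyp=\Iod$: on the sphere the four decompositions coincide, so $\Iod$ and $\iOd$ are orthogonal because $\hardyp$ and $\hardym$ are. Either way this direction is a one-line consequence of what precedes.

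The substantive direction is the converse: assume $\Iod \perp \iOd$ and conclude $\bo$ is a sphere. Here the strategy is to show that this orthogonality forces the Hardy spaces to be orthogonal, and then invoke Theorem \ref{thm:HardyOrto}. Concretely, $\Iod = \mathcal{N}[\poti] \cap \mathcal{N}[\poto]^{\perp} = \mathcal{N}[\poti] \ominus \divf$, the orthogonal complement of $\divf$ inside $\mathcal{N}[\poti] = \hardym^\perp$; similarly $\iOd = \mathcal{N}[\poto] \ominus \divf = (\hardyp^\perp) \ominus \divf$. Using Lemma \ref{lem:exclusively_silent}, $\mathcal{N}[\poti]\cap\mathcal{N}[\poto] = \divf$, so $\divf$ is exactly the common part and $\mathcal{N}[\poti] = \Iod \oplus \divf$, $\mathcal{N}[\poto] = \iOd \oplus \divf$ as orthogonal sums. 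From $\ltd d = \Iod \oplus \hardym \oplus \divf$ we read off $\hardyp = \mathcal{N}[\poto]^\perp = (\iOd \oplus \divf)^\perp$; and since $\Iod \perp \iOd$ by hypothesis and $\Iod \perp \divf$ by construction, we get $\Iod \subset (\iOd \oplus \divf)^\perp = \hardyp$. By the symmetric argument (swapping the roles of inner and outer), $\iOd \subset \hardym$. Counting: $\ltd d = \Iod \oplus \hardym \oplus \divf$ and also $\ltd d = \hardyp \oplus \iOd \oplus \divf$; combining $\Iod \subseteq \hardyp$ with the first decomposition shows $\hardyp \subseteq \Iod \oplus \divf$, but $\hardyp \cap \divf = \{0\}$ (a nonzero tangential divergence-free field cannot be a Hardy field — it has no normal component yet would have to equal a harmonic gradient), so in fact $\hardyp = \Iod$ and likewise $\hardym = \iOd$. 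Then $\hardyp \perp \hardym$ follows from $\Iod \perp \iOd$, and Theorem \ref{thm:HardyOrto} gives that $\bo$ is a sphere.

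The main obstacle I anticipate is the dimension-counting step that upgrades the inclusions $\Iod \subseteq \hardyp$, $\iOd \subseteq \hardym$ to equalities; one must be careful because these are infinite-dimensional closed subspaces, so one cannot literally count dimensions but must instead argue via the two orthogonal direct sum decompositions and the triviality of the intersections $\hardyp \cap \divf$ and $\hardym \cap \divf$. Establishing $\hardyp \cap \divf = \{0\}$ cleanly is the delicate point: an element of $\divf$ is purely tangential, while a nonzero element of $\hardyp$ with vanishing normal component would, via Lemma \ref{lem:adjoint of BO} and the structure $\troc g = -\eta(\tfrac12 - \dlay^\star)g + \nablat\slay g$, force $(\tfrac12 - \dlay^\star)g = 0$ hence $g$ a multiple of $1 - \consti$, whence $\nablat\slay g = 0$ by \eqref{eq:gradient of SLP} and the field itself is zero. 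Once this is in place the rest is bookkeeping with orthogonal complements, and the corollary closes by citing Theorem \ref{thm:HardyOrto}.
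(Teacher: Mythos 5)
Your overall strategy is the same as the paper's: reduce the statement to the orthogonality of the Hardy spaces and then invoke Theorem \ref{thm:HardyOrto}. The forward direction is fine. In the converse direction, however, your pivotal step --- ``combining $\Iod\subseteq\hardyp$ with the first decomposition shows $\hardyp\subseteq\Iod\oplus\divf$'' --- is asserted rather than proved, and it is not innocent: since $\Iod\oplus\divf=\mathcal{N}[\poti]=\hardym^{\perp}$, the containment $\hardyp\subseteq\Iod\oplus\divf$ is \emph{equivalent} to $\hardyp\perp\hardym$, i.e.\ to the very conclusion you are after. It does follow, but only by writing $h\in\hardyp$ as $h=a+m+d$ in the decomposition \eqref{eq:HmID}, noting $h-a=m+d\in\hardyp\cap(\hardym+\divf)$ because $a\in\Iod\subseteq\hardyp$, and then using that this intersection is trivial --- which is precisely the directness of the Hardy--Hodge sum established in Theorem \ref{thm:hardy-hodge decomposition} (via $\hardyp\cap\hardym=\{0\}$ and $\divf\perp(\hardyp+\hardym)$). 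You never cite that theorem, so as written there is a circular-looking hole at the crux of the argument; once you invoke it the proof closes. By contrast, the point you flag as delicate, $\hardyp\cap\divf=\{0\}$, is immediate from the orthogonality in \eqref{eq:HpOD} and needs no layer-potential computation.

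The paper's own proof is shorter and packages exactly the missing ingredient differently: from \eqref{eq:HH} and \eqref{eq:IOD} one has $\Iod+\iOd=\projDp\ltd d=\hardym+\hardyp$, so $\Iod$ and $\iOd$ are the orthogonal complements of $\hardym$ and $\hardyp$ inside the common subspace $\projDp\ltd d$, and the equivalence $\Iod\perp\iOd\Leftrightarrow\hardyp\perp\hardym$ drops out directly. Your inclusion-chasing version proves slightly more along the way (it re-derives $\Iod=\hardyp$ and $\iOd=\hardym$ under the orthogonality hypothesis, essentially Corollary \ref{cor:ieh}), but it buys nothing extra for the corollary itself and costs the explicit appeal to Theorem \ref{thm:hardy-hodge decomposition} that you omitted.
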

\begin{proof}
The spaces $\Iod$ and $\hardym$ are orthogonal by Theorem \ref{thm:L2 decomposition in}, and the spaces $\iOd$ and $\hardyp$ are orthogonal by Theorem \ref{thm:L2 decomposition out}. Moreover, from \eqref{eq:HH} and \eqref{eq:IOD}, we also have $\Iod + \iOd=\projDp\ltd d=\hardym + \hardyp$. Thus, $\Iod$ is orthogonal to $ \iOd$ if and only if $\hardym$ is orthogonal to $\hardyp $, and the assertion follows from Theorem \ref{thm:HardyOrto}.
\end{proof}

\begin{cor}\label{cor:ieh}
If $\bo$ is a sphere then $\Iod = \hardyp$ and $\iOd = \hardym$.
\end{cor}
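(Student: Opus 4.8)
The plan is to combine the two orthogonal direct sum decompositions already established (Theorems \ref{thm:L2 decomposition in} and \ref{thm:L2 decomposition out}) with the sphere-specific orthogonality of the Hardy spaces (Theorem \ref{thm:HardyOrto}). First I would recall that, by Theorem \ref{thm:L2 decomposition in}, $\Iod$ is the orthogonal complement of $\hardym \oplus \divf$ in $\ltd d$; and by Theorem \ref{thm:L2 decomposition out}, $\hardyp$ is likewise the orthogonal complement of $\iOd \oplus \divf$. Since $\bo$ is a sphere, Corollary \ref{cor:IODortho} tells us that $\Iod$ and $\iOd$ are orthogonal, so $\iOd \subseteq \Iod^{\perp}$, and moreover $\divf \subseteq \tang$ is orthogonal to both $\hardyp$ and $\hardym$ in the sphere case (this is the classical Hardy-Hodge orthogonality, but it also follows from Theorem \ref{thm:L2 decomposition in} since $\divf$ sits inside the orthogonal decomposition alongside $\hardym$).

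The cleanest route is as follows. On the sphere, the identity $\Iod + \iOd = \projDp \ltd d = \hardyp + \hardym$ from the proof of Corollary \ref{cor:IODortho} shows the two ``visible'' parts of $\ltd d$ agree as a set, with $\divf$ the common complement. Now I would argue the inclusion $\hardyp \subseteq \Iod$: a field $g \in \hardyp$ is the nontangential limit of a harmonic gradient inside $\o$, so $\mathcal{P}g$ restricted to $\o$ is, up to a constant (indeed equal to) minus that harmonic potential up to additive constant — more precisely, since $g = \nabla h|_{\bo}$ with $h$ harmonic in $\o$, one checks directly from the definition of $\pot$ (via Green's representation / the jump relations of the layer potentials) that $\poti g = 0$, i.e. $g \in \mathcal{N}[\poti]$. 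On the other hand $\hardyp = \mathcal{N}[\poto]^{\perp}$ by Lemma \ref{lem:adjoint of BO}, and since $\bo$ is a sphere the orthogonality of the Hardy spaces together with $\hardym = \mathcal{R}[\tric] \supseteq$ the relevant part gives $\hardyp \perp \divf$ as well; hence $\hardyp \subseteq \mathcal{N}[\poti] \cap \mathcal{N}[\poto]^{\perp} = \Iod$. A symmetric argument gives $\hardym \subseteq \iOd$. Finally, counting dimensions via the direct sums — $\ltd d = \Iod \oplus \hardym \oplus \divf = \hardyp \oplus \iOd \oplus \divf$ and, in the sphere case, $\hardyp \oplus \hardym \oplus \divf$ as well (Theorem \ref{thm:hardy-hodge decomposition} with orthogonality from Theorem \ref{thm:HardyOrto}) — forces the reverse inclusions, so $\Iod = \hardyp$ and $\iOd = \hardym$.

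I expect the main obstacle to be the verification that $\hardyp \subseteq \mathcal{N}[\poti]$, i.e. that the scalar potential of a Hardy field vanishes inside. This is where one must genuinely use that $g$ is a boundary gradient of an interior harmonic function rather than an arbitrary field: the cleanest formulation is that for $g = \nabla h|_{\bo}$ with $h$ harmonic in $\o$ and $(\nabla h)^M \in \lt$, the scalar potential $\pot g$ is (a constant multiple of) the single layer potential of $\divt g_{\tcomp} + \partial_\eta h = \divt g_{\tcomp} + g_\eta$ interpreted appropriately, and Green's identity forces this to produce $0$ inside. Alternatively, one sidesteps this by a pure linear-algebra argument: granted only orthogonality of the Hardy spaces on the sphere and the four decompositions of the Main result, the equalities $\Iod = \hardyp$, $\iOd = \hardym$ follow formally because both triples $(\Iod,\hardym,\divf)$ and $(\hardyp,\iOd,\divf)$ are orthogonal decompositions with the same third summand $\divf$, and $\Iod + \iOd = \hardyp + \hardym$; matching the orthogonal complements of $\divf$ and then of $\hardym$ inside them pins down the spaces uniquely. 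I would present this second, softer argument in the main text and relegate the direct computation $\hardyp \subseteq \mathcal{N}[\poti]$ to a remark if needed.
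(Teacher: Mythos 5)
Your preferred ``softer'' argument is exactly the paper's proof: on the sphere both $\Iod$ and $\hardyp$ are orthogonal to $\hardym$ and $\Iod\oplus\hardym=\hardyp\oplus\hardym$ (both being the orthogonal complement of $\divf$, by \eqref{eq:HmID} and the orthogonal Hardy--Hodge decomposition), so taking orthogonal complements of $\hardym$ inside this common subspace forces $\Iod=\hardyp$, and symmetrically $\iOd=\hardym$. The sketched direct verification that $\hardyp\subseteq\mathcal{N}\left[\poti\right]$ is unnecessary and is itself a sphere-only fact equivalent to the corollary (it fails on general Lipschitz domains), so it is just as well that you do not rely on it.
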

\begin{proof}
On a sphere $\Iod$ and  $\hardyp$ are both orthogonal to $\hardym$. Moreover, $\Iod \oplus \hardym = \hardyp \oplus \hardym$, by \eqref{eq:HmID} and \eqref{eq:HH}. Hence, we get that $\Iod = \hardyp$. An analogous argument holds for $\iOd$ and $\hardym$.
\end{proof}

By the above corollary it is immediate that \eqref{eq:HmID}, \eqref{eq:HpOD}, \eqref{eq:HH}, and \eqref{eq:IOD} are identical when  $\bo$ is a sphere.

\paragraph{Acknowledgments.} AK and CG have been partially funded by BMWi (Bundesministerium f\"ur Wirtschaft und Energie) within the joint project 'SYSEXPL -- Systematische Exploration', grant ref. 03EE4002B.

\printbibliography

\begin{appendix}
\section{Appendix}
\label{appendix} 
The definition of strongly Lipschitz domains is standard and 
can be found in many textbooks, including \cite{adams03,Grisvard11aa}. Nevertheless, the basic differential-geometric notions on Lipschitz manifolds are not easy to ferret out in the literature. For the convenience of the reader we therefore recall the concepts here.

In the following $\mathbb{B}$ will be an open ball in $\mathbb{R}^{d-1}$, and $U$ will denote a doubly truncated cylinder whose cross-section is $\mathbb{B}$.

\paragraph{Strongly Lipschitz domain.}
We call a bounded region $\o$ in $\mathbb{R}^{d}$ a strongly Lipschitz domain if for each $x\in\partial\Omega$
there is a cylinder $U$, a rigid motion $L$, and a Lipschitz function $\psi\colon\mathbb{B}\to\mathbb{R}$ such that, 
\begin{align*}
U\cap \Omega=\{L(y,t) \, :  y\in \mathbb{B},\ 0\leq t<\psi(y)\}
&&\text{and }&&
U\cap\partial\Omega=\{L(y,\psi(y)) \, :  y\in\mathbb{B}\}.
\end{align*} 
Since $\bo$ is compact, we can cover it with finitely many such cylinders $U_j$ associated with $\mathbb{B}_j$, $L_j$, and $\psi_j$,  
 for $j\in\{1,\dots,N\}$. If in addition, we introduce the projection onto the first $(d-1)$ components as $P_{d-1}\colon\mathbb{R}^d\to\mathbb{R}^{d-1}$ then the maps $\phi_{j}:=P_{d-1}\circ L_j^{-1}\colon U_j\cap\bo\to \mathbb{B}_j$ define a system of charts on $\bo$ with Lipschitz inverse $\phi_j^{-1}\colon\mathbb{B}_j\to\bo\subset\mathbb{R}^d$ given by $\phi_j^{-1}(y)=(y,\psi_j(y))$; this provides us with the bi-Lipschitz change of charts and makes $\bo$ a Lipschitz manifold.

\paragraph{Singular and regular points.}
A point $x\in\bo$ is  called singular if there is a
$j\in\{1,\cdots,N\}$ such that $x\in  U_j$ and
$\phi_j^{-1}$ is not differentiable at $\phi_j(x)$. 
A point which 
is not singular is called regular.
We  denote
the set of regular points  by
$\reg$
and put  $\regb=\phi_j(\reg\cap U_j)$. Since $\phi_j$ is bi-Lipschitz, at regular points, its
derivative, $D\phi_j^{-1}(y)$, is injective there.

The set of singular points has $\sigma$-measure zero on $\bo$.
One can see this as follows: by Rademacher's theorem \cite[thm. 3.2]{evans2015measure} the set of singular points $\mathbb{B}_j\setminus \regb$,
has Lebesgue measure zero for each $j$. But the Lebesgue measure on $\mathbb{R}^{d-1}$ coincides with the $\left(d-1\right)$-dimensional Hausdorff measure, $\mathcal{H}^{d-1}$, and thus
the set of singular points has $\mathcal{H}^{d-1}$-measure zero, which is preserved by Lipschitz functions $\phi_j^{-1}$ \cite{federer2014geometric}.

With the above definition, the set of regular points depends on the atlas. Nevertheless, there is also an intrinsic atlas-independent definition (see for example \cite{BPT}), as the set of those points for which the measure theoretic normal to $\o$ exists.

\paragraph{Tangent space.}
At $x\in U_j\cap\reg$,
we define the tangent space 
$\tan{x}\subset\mathbb{R}^d$  to be
$\mathcal{R}[D\phi_j^{-1}(\phi_j(x))]$. 
Then,
each $X\in \tan{x}$ has  a
local representative  %
 in the chart $(U_j,\phi_j)$, 
which is  the unique vector
$v\in\mathbb{R}^{d-1}$ such that $X=D\phi_j^{-1}(\phi_j(x))v$.
At each regular point the tangent space has dimension $d-1$,
and thus,
we can define 
the outer unit normal $\eta(x)$, oriented such that, for small $t>0$ the vector $x+t\eta(x)$ is in $\o^{\exter}$.

\paragraph{Differentiability on $\bo$.}
A map $g\colon\bo\to\mathbb{R}^k$ is said to be 
differentiable at $x\in\reg$ if
$g\circ\phi_j^{-1}$ is differentiable at $\phi_j(x)$; and if  $v$ is the local representative of $X$ then the derivative
$Dg(x)\colon\tan{x}\to\mathbb{R}^k$ is defined by
$Dg(x)(X)=D(g\circ\phi_j^{-1})(\phi_j(x))v$.  By the chain rule, the derivative is chart independent.

When $g:\bo\to\mathbb{R}$ is 
differentiable at $x\in\reg$, the map 
$Dg(x)$ is a linear form on $\tan{x}$, and thus it can be represented 
as $X\mapsto \left\langle X, Y\right\rangle_{\mathbb{R}^{d}}$ for some
unique vector $Y\in\tan{x}$ that we call the tangential gradient of $g$ at
$x$ and denoted by $\nablat g(x)$.

If $f\colon\bo\to\mathbb{R}$ is a Lipschitz function on $\bo$ than
$f\circ\phi_j^{-1}:\mathbb{B}_j\to\mathbb{R}^d$ is also Lipschitz for each $j$. By Rademacher's theorem, 
$f\circ\phi_j^{-1}$  is differentiable a.e. on $\mathbb{B}_j$
and consequently $f$ is differentiable a.e. on $\bo$. Moreover, the derivatives of a Lipschitz function are uniformly bounded by the Lipschitz constant. 

In fact, we can extend $f$ to a function $F$, defined on a small neighborhood around $\bo$, such that $\nablat f$ reads as the Euclidean gradient of $F$. 
To verify this extend $f$ as follows: for each $j$, define $\tilde{f}_j\colon U_j\to\mathbb{R}$ by $\tilde{f}_{j}(L_j(y,t))=f(L_j(y,\psi_j(y))$.
As the differential of $\tilde{f}_j$ is independent of $t$, it is easily seen that
$\tilde{f}_j$ is differentiable a.e. on $\bo\cap U_j$. Using a smooth partition of 
unity relative to the $U_j$'s, $\Omega$, and $\o^{\exter}$, we can glue the $\tilde{f}_j$'s together into a single Lipschitz map $F:\cup_{j}^{N} U_{j}\to \mathbb{R}^d$. This map has the following properties: its restriction to $\bo$ is $f$; it is differentiable a.e. 
on $\bo$; and there, its Euclidean gradient, $\nabla F$, coincides with $\nablat f$.

\paragraph{Sobolev spaces.}
The Sobolev space $W^{1,2}(\bo)$ is the Hilbert space obtained as the completion of Lipschitz functions with respect to the norm, 
\begin{equation}
\label{defSob}
\|\psi\|_{\sob}=\left(\|\psi\|_{\lt}^2+
\|\nablat\psi\|_{\ltd d}^2
\right)^{1/2}.
\end{equation}
Equivalently, the function $\psi$ is in  $\sob$ if and only if,
for each chart $(U_j,\phi_j)$  the function 
$\psi\circ \phi_j^{-1}$ is in the Euclidean
 Sobolev space $W^{1,2}(\mathbb{B}_j)$; thus $\sob$ agrees
with the space $L_1^2(\bo)$ used in \cite[Def. 1.7]{ver84}.
Each $\psi\in\sob$ has a well defined tangential gradient $\nablat\psi\in\tang\subset\ltd d$, see \cite[thm. 3.17]{adams03}. 

\begin{lem}
  \label{convgrad}
  For $f\in L^2(\bo)$, it holds that $\nabla_TSf$ defined in Lemma \ref{gradTP} is the tangential gradient of $Sf\in W^{1,2}(\bo)$.
\end{lem}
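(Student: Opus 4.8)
The plan is to realize $\slay f$ as an $L^{2}(\bo)$--limit of \emph{Lipschitz} functions on $\bo$ whose tangential gradients converge in $\ltd d$ to the tangent field $\nablat\slay f$, and then to read off the conclusion from the very definition of $\sob$ as a completion. By the maximal--function estimates $\|(\SLay f)^{M}\|\le C\|f\|$ and $\|(\nabla\SLay f)^{M}\|\le C\|f\|$ invoked earlier, together with \cite[Sec. 5, thm.]{HuWh68} and \cite[thm. 1]{DahlbergHM1977}, the function $\SLay f$ and each scalar component of $\nabla\SLay f$ possess nontangential limits a.e. on $\bo$ from inside $\Omega$; write $\slay f$ for the former and $F$ for the latter (an $\ltd d$--field). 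By definition of $\nablat\slay f$ (and by the Calder\'on--Zygmund bounds of \cite[ch.~15, thm.~1]{CoifmanMeyer97} that also yield the principal-value representation) one has $\nablat\slay f(\P)=\pi_{\P}F(\P)$ for a.e.\ $\P$, where $\pi_{\P}$ denotes the orthogonal projection of $\mathbb{R}^{d}$ onto $\tan\P$.

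First I would deform $\bo$ into $\Omega$ transversally. Using the regular family of cones of Section~\ref{sec:prelim} (smoothing the map $Z$ there, which preserves the defining cone inclusions), fix a $C^{1}$ vector field $\vec h$ defined on a neighbourhood of $\bo$ in $\mathbb{R}^{d}$ and a number $\epsilon_{0}>0$ such that, for every $\P\in\bo$, the segment $\{\P+t\,\vec h(\P):0<t\le\epsilon_{0}\}$ lies in the inner cone $\Gamma_{\inter}(\P)$. For $0<\epsilon\le\epsilon_{0}$ set $\Lambda_{\epsilon}(\P)=\P+\epsilon\,\vec h(\P)$; then $\Lambda_{\epsilon}(\bo)$ is a compact subset of $\Omega$, on a neighbourhood of which $\SLay f$ is harmonic, hence $C^{\infty}$, so $g_{\epsilon}\doteq\SLay f\circ\Lambda_{\epsilon}$ is a Lipschitz function on $\bo$. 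By the chain rule on $\bo$ recalled in this appendix, for a.e.\ $\P\in\bo$,
\[
\nablat g_{\epsilon}(\P)=\pi_{\P}\bigl[(D\Lambda_{\epsilon}(\P))^{\mathsf{T}}\,\nabla\SLay f(\Lambda_{\epsilon}(\P))\bigr],
\qquad D\Lambda_{\epsilon}=\mathds{1}+\epsilon\,D\vec h .
\]

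Now let $\epsilon\to0$. Since $\Lambda_{\epsilon}(\P)$ runs to $\P$ inside $\Gamma_{\inter}(\P)$, the nontangential limits give $g_{\epsilon}(\P)\to\slay f(\P)$ and $\nabla\SLay f(\Lambda_{\epsilon}(\P))\to F(\P)$ for a.e.\ $\P$; as $D\Lambda_{\epsilon}(\P)\to\mathds{1}$ a.e., the displayed identity yields $\nablat g_{\epsilon}(\P)\to\pi_{\P}F(\P)=\nablat\slay f(\P)$ a.e. Moreover $|g_{\epsilon}|\le(\SLay f)^{M}$ and $|\nablat g_{\epsilon}|\le(1+\epsilon_{0}\|D\vec h\|_{\infty})\,(\nabla\SLay f)^{M}$ pointwise on $\bo$, uniformly in $\epsilon$, and the two dominating functions belong to $L^{2}(\bo)$ by the maximal-function bounds. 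Dominated convergence then gives $g_{\epsilon}\to\slay f$ in $L^{2}(\bo)$ and $\nablat g_{\epsilon}\to\nablat\slay f$ in $\ltd d$.

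To conclude, recall that $\sob$ is the completion of the Lipschitz functions on $\bo$ for the norm \eqref{defSob}, and that the tangential gradient is closable on it: if Lipschitz functions $h_{n}\to0$ in $L^{2}(\bo)$ and $\nablat h_{n}\to G$ in $\ltd d$, then $G=0$, as one sees by testing against an arbitrary Lipschitz tangent field $Y$ and using $\int_{\bo}\langle\nablat h_{n},Y\rangle\,d\sigma=-\int_{\bo}h_{n}\,\divt Y\,d\sigma$ (valid since $\bo$ is a closed Lipschitz manifold) before letting $n\to\infty$, together with the density of such $Y$ in $\tang$. Applying this to a sequence $g_{\epsilon_{k}}$ with $\epsilon_{k}\to0$: the pair $(\slay f,\nablat\slay f)$ is the $L^{2}(\bo)\times\ltd d$ limit of $(g_{\epsilon_{k}},\nablat g_{\epsilon_{k}})$, whence $\slay f\in\sob$ and $\nablat(\slay f)=\nablat\slay f$, which is the assertion. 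The main obstacle I anticipate is the construction of the transversal field $\vec h$ and the verification that $\Lambda_{\epsilon}(\P)\to\P$ nontangentially---precisely where the regular family of cones and a smoothing step enter---and the bookkeeping needed to dominate all convergences uniformly in $\epsilon$ by the nontangential maximal functions; the closability step, though it uses the divergence theorem on $\bo$, is routine once one localizes with a partition of unity.
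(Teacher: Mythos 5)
Your proposal follows essentially the same route as the paper: approximate $\slay f$ by the Lipschitz functions obtained by evaluating $\SLay f$ at points pushed slightly into $\Omega$, dominate everything by the nontangential maximal functions to get $L^{2}$ convergence of the functions and of their gradients, and then identify the limit of the gradients with the weak tangential gradient of $\slay f$; the paper merely does this chart by chart, shifting vertically by $\varepsilon$ inside each coordinate cylinder instead of flowing along a global transversal field. One caution about your final closability step: pairing $h_{n}$ against $\divt Y$ for a Lipschitz tangent field $Y$ is delicate on a Lipschitz surface, since $\divt Y$ is a priori only a functional on $\sob$ (the surface-measure density $\sqrt{1+|\nabla\Psi_j|^{2}}$ in a chart is merely bounded, not differentiable), so the pairing is not controlled by $\|h_{n}\|_{L^{2}}$ alone and the asserted integration by parts with respect to $\sigma$ is not ``routine''. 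The clean repair---and what the paper actually does---is to pull back to the chart and test $\partial_{y_i}(h_{n}\circ\phi_j^{-1})$ against smooth compactly supported $\varphi$ with respect to Lebesgue measure on $\mathbb{B}_j$, which requires no smoothness of the metric and yields the weak gradient of $\slay f\circ\phi_j^{-1}$ directly.
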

\begin{proof}
  Let $F\in L^2(\bo,\mathbb{R}^d)$ be the nontangential limit of $\nabla\SLay f$ on $\bo$ from inside, which is known to exist a.e.  We will show that the
  tangential component of $F$ is equal to $\nabla_TSf$. The limit of
  $\nabla\SLay f$ from outside can be handled similarly, and in view of
the limiting relation in Lemma \ref{gradTP} this will achieve the proof.
The statement being local, it is enough to proceed in a coordinate cylinder $U_j$, and we may assume for simplicity that it has vertical axis so that
  $\phi_j:U_j\cap\bo\to\mathbb{B}_j$ is the projection onto the first $d-1$ components and $\phi_j^{-1}:\mathbb{B}_j\to U_j\cap\bo$ is given by
  $\phi^{-1}_j(y)=(y,\Psi_j(y))$
  for some Lipschitz function $\Psi_j:\mathbb{B}_j\to\mathbb{R}$.
  Set $e_d:=(0,\cdots,0,1)^t$ and let $C_{\theta_1,e_n}(\xi)$ be a natural cone of aperture $2\theta_1$ at
  $\xi\in U_j\cap\bo$ in the chart $(U_j,\phi_j)$, contained in the cone $C_{\theta,Z(\xi)}(\xi)$ from the regular family of cones we have fixed. Recall that $\theta_1$ can be taken independent of $\xi$.
For $\varepsilon>0$ 
small enough that $(y,\Psi_j(y)-\varepsilon)\in\Omega^+\cap U_j$ when
$y\in \mathbb{B}_j$, the smoothness  of $\SLay f$ in $\Omega^+$  implies
that  $h_\varepsilon(y):=\SLay f(y,\Psi_j(y)-\varepsilon)$
is Lipschitz in $\mathbb{B}_j$ with gradient
$(\nabla h_\varepsilon(y))^t= (\nabla\SLay(y,\Psi_j(y)-\varepsilon))^tD\phi_j^{-1}(y)$.
Let $\varepsilon_k\to0$ and observe that 
$(y,\Psi_j(y)-\varepsilon_k)$ converges to $\xi=(y,\Psi_j(y))$ from within 
$C_{\theta,e_d}(\xi)$, hence $\nabla\SLay f(y,\Psi_j(y)-\varepsilon_k)$
converges for $m_{d-1}$-a.e. $y\in \mathbb{B}_j$ to $F(y,\Psi_j(y))$, while being
dominated pointwise in norm by $(\nabla\SLay f)^M(y,\Psi_j(y))$; here, $m_{d-1}$ indicates $d-1$-dimensional Lebesgue measure. As
\[
\int_{\mathbb{B}_j}\left((\nabla\SLay f)^M(y,\Psi_j(y))\right)^2\sqrt{1+|\nabla\Psi_j|^2}dm_{d-1}=\int_{U_j\cap\bo}\left((\nabla\SLay f)^M(\xi)\right)^2d\sigma(\xi)
  \]
  because the image of $d\sigma$ in local coordinates is
  $\sqrt{1+|\nabla\Psi_j|^2}dm_{d-1}$,
  and since $|\nabla\Psi_j|$ is uniformly bounded, we get that
  ${(\nabla\SLay f)^M}$ composed with $\phi_j^{-1}$ lies in $L^2(\mathbb{B}_j)$.
  Therefore, by dominated convergence,
$\nabla h_{\varepsilon_k}$ converges in $L^2(\mathbb{B}_j)$ to
$(D\phi_j^{-1})^t F\circ\phi_j^{-1}$. Likewise,
$h_{\varepsilon_k}$ converges pointwise a.e. to $Sf(y,\Psi_j(y))$ while being dominated pointwise by $(\SLay f)^M(y,\Psi_j(y))$ that lies in $L^2(\mathbb{B}_j)$,
therefore $h_{\varepsilon_k}\to Sf\circ\phi_j^{-1}$ in $L^2(\mathbb{B}_j)$
by dominated convergence. Hence, if we pick a smooth function $\varphi$ with compact support in $\mathbb{B}_j$ and pass to the limit in the relations
$\int_{\mathbb{B}_j}\partial_{y_i}h_{\varepsilon_k}\varphi=-\int_{\mathbb{B}_j}h_{\varepsilon_k}\partial_{y_i}\varphi$,  we find that 
${Sf\circ\phi_j^{-1}}_{|B(y_0,\delta)}$ lies in $W^{1,2}(\mathbb{B}_j)$ 
with
\begin{equation}
\label{expglim}
\nabla ({Sf\circ\phi_j^{-1}})=
(D\phi_j^{-1})^t F\circ\phi_j^{-1}.
\end{equation}
Since the normal $\eta(\xi)$ is orthogonal to the columns of $D\phi_j^{-1}(\phi_j(\xi))$ that span the tangent pace $\tan{\xi}$, a short computation
shows that \eqref{expglim} is equivalent to saying that the tangential component
of the field $F$ is the gradient of $Sf$.
  \end{proof}
\end{appendix}
\end{document}